\begin{document}

\newtheorem{assumption}{Assumption}[section]
\newtheorem{definition}{Definition}[section]
\newtheorem{lemma}{Lemma}[section]
\newtheorem{proposition}{Proposition}[section]
\newtheorem{theorem}{Theorem}[section]
\newtheorem{corollary}{Corollary}[section]
\newtheorem{remark}{Remark}[section]
\newtheorem{conjecture}{Conjecture}[section]
\newtheorem{example}{Example}[section]

\small

\title{A computational approach to persistence, permanence, and endotacticity of biochemical reaction systems}
\author{Matthew D. Johnston$^a$, Casian Pantea$^b$, and Pete Donnell$^c$ \bigskip \\
${}^a$ Department of Mathematics,\\
University of Wisconsin-Madison, Madison, WI 53706, USA\\
${}^b$ Department of Mathematics,\\
West Virginia University, Morgantown, WV 26506, USA\\
${}^c$ Department of Mathematics,\\
University of Portsmouth, Portsmouth, PO1 3HF, UK}
\date{}
\maketitle



\tableofcontents

\begin{abstract}
\small
We introduce a mixed-integer linear programming (MILP) framework capable of determining whether a chemical reaction network possesses the property of being endotactic or strongly endotactic. The network property of being strongly endotactic is known to lead to persistence and permanence of chemical species under genetic kinetic assumptions, while the same result is conjectured but as yet unproved for general endotactic networks. The algorithms we present are the first capable of verifying endotacticity of chemical reaction networks for systems with greater than two constituent species. We implement the algorithms in the open-source online package {\sf CoNtRol} and apply them to several well-studied biochemical examples, including the general $n$-site phosphorylation / dephosphorylation networks and a circadian clock mechanism.
\end{abstract}

\noindent \textbf{Keywords:} chemical reaction network, chemical kinetics, persistence, permanence, endotactic \newline \textbf{AMS Subject Classifications:} 92C42,  90C35 

\bigskip

\section{Introduction}
\label{introduction}

Significant attention has been paid recently to the question of how to extract qualitative dynamical information from topological properties of a reaction network's underlying graph of interacting species. One specific dynamical property of interest is {\it persistence}, whereby no initially present species may tend toward zero. This corresponds to non-extinction of a system's constituent species and is mathematically understood as trajectories avoiding $\partial \mathbb{R}_{> 0}^m$. Determining the persistence of chemical reaction systems is complicated by a number of factors: (a) the proposed mathematical models may be studied with a variety of kinetic assumptions, (b) parameter values are often unknown or known to only limited precision, and (c) the systems may have many interacting species.

It is surprising, therefore, that there are many classical results capable of characterizing the persistence, steady state, and stability properties of chemical reaction systems regardless of parameter values or even the kinetic form. An especially successful framework for such results has been that of {\it chemical reaction network theory} (CRNT). In the seminal paper \cite{H-J1}, Fritz Horn and Roy Jackson showed that every complex-balanced mass action system has a unique positive steady state within each minimally invariant affine subspace of the state space (called a {\em stoichiometric compatibility class}), and that this state is locally asymptotically stable. In \cite{F1} and \cite{H}, Horn and Martin Feinberg further relate the property of complex-balancing to a network parameter known as the ``deficiency''. The connection between the deficiency and steady state properties of mass action systems has been further developed in the papers \cite{F2,Fe2,Fe3,Fe4}.


In \cite{H-J1}, however, Horn and Jackson were unable to prove that initially positive trajectories far away from the complex-balanced equilibria could not tend toward $\partial \mathbb{R}_{>0}^m$ rather than the stoichiometrically-compatible positive steady state. This persistence property has been of interest more generally within chemical kinetics and also biology as it corresponds to the property that no initially present species (chemical or organism) tends toward extinction. Persistence of complex-balanced systems is known to be sufficient to prove the following \cite{S-M,C-D-S-S}:
\begin{conjecture}[Global Attractor Conjecture]
\label{gac}
The unique complex-balanced steady state in each positive stoichiometric compatibility class of a complex-balanced mass action system is a global attractor of its respective compatibility class.
\end{conjecture}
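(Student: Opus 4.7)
The strategy is to reduce the Global Attractor Conjecture to a pure persistence statement and then attack persistence via network topology. The starting point is the Horn--Jackson theorem: for any complex-balanced mass action system with equilibrium $x^* \in \mathbb{R}_{>0}^m$, the pseudo-Helmholtz function $V(x) = \sum_i \bigl(x_i \ln(x_i/x_i^*) - x_i + x_i^*\bigr)$ is a strict Lyapunov function on the open positive orthant: $V$ is strictly decreasing along non-equilibrium trajectories and achieves its unique minimum at $x^*$ within each stoichiometric compatibility class. Compactness of the level sets of $V$ restricted to a compatibility class forces every forward trajectory starting in $\mathbb{R}_{>0}^m$ to be bounded, so its $\omega$-limit set is nonempty and compact.

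By LaSalle's invariance principle applied to $V$, any $\omega$-limit point lying in the interior $\mathbb{R}_{>0}^m$ must be an equilibrium in the relevant compatibility class, which by Horn--Jackson forces it to equal $x^*$. Thus convergence to $x^*$ from an interior initial condition can fail only if an $\omega$-limit point lies on the boundary $\partial \mathbb{R}_{\geq 0}^m$. The conjecture is therefore equivalent to the persistence statement ``no initially positive trajectory has an $\omega$-limit point on the boundary'', a reduction already flagged in the excerpt.

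The principal obstacle is precisely this persistence step, and it is the remaining open problem the rest of the paper is set up to address computationally. A natural line of attack is to verify that the network is endotactic, or better strongly endotactic, which is known to imply persistence (and permanence) under mass action kinetics; this is where the MILP framework announced in the abstract would enter. Concretely, one would stratify $\partial \mathbb{R}_{\geq 0}^m$ by the set of vanishing species and, on each face $F$, show that the inward drift contributed by reactions whose reactant complex is not supported on $F$ dominates the boundary dynamics, preventing $F$ from carrying an $\omega$-limit point. The hard part is handling low-dimensional strata, in particular vertices of the stoichiometric polytope, where many species vanish simultaneously and face-by-face inductive arguments are known to break down; controlling these corner cases in full generality is what has so far kept the Global Attractor Conjecture open beyond special classes such as networks of low deficiency or low dimension.
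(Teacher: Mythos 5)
This statement is a \emph{conjecture}, not a theorem: the paper offers no proof of it and explicitly states that the full conjecture remains open (it is known only in special cases such as stoichiometric compatibility classes of dimension at most three and single linkage class networks). So there is no ``paper's own proof'' to compare against, and your proposal should not be read as competing with one.

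Your write-up correctly reproduces the standard, well-known reduction: the Horn--Jackson pseudo-Helmholtz function $V$ is a strict Lyapunov function on the interior, trajectories are bounded, and LaSalle forces any interior $\omega$-limit point to be the complex-balanced equilibrium, so the conjecture is equivalent to ruling out boundary $\omega$-limit points, i.e.\ to persistence. This is exactly the reduction the paper cites (to Siegel--MacLean and Craciun et al.) when it says persistence of complex-balanced systems suffices for the Global Attractor Conjecture. But from that point on your argument does not close. The claim that one can ``verify that the network is endotactic'' does not work in general: endotacticity of a general complex-balanced (indeed weakly reversible) network is automatic, but the implication ``endotactic $\Rightarrow$ persistent'' is itself Conjecture \ref{wrc}/\ref{conj} of the paper and is open except in low dimension and in the strongly endotactic case; not every complex-balanced network is strongly endotactic (network (C) of Figure \ref{fig1} is weakly reversible but not strongly endotactic). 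Your face-by-face stratification sketch is precisely the part that is known to fail at low-dimensional strata, as you yourself note. So the proposal contains a genuine and unavoidable gap --- the persistence step --- and does not constitute a proof; it is an accurate survey of why the problem is hard rather than a resolution of it.
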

\noindent This conjecture, which is considered one of the most important open questions in CRNT, is known to hold in several special cases, including when the stoichiometric compatibility class is three-dimensional or less (Casian Pantea, \cite{Pa}) and when the network consists of only a single linkage class (David F. Anderson, \cite{A4}). The full conjecture remains open.


It was noted by Feinberg in \cite{F2} (Remark 6.1.E) that, although weakly reversible mass action systems which are not complex-balanced may permit varied behavior in the strictly positive region, including multistationarity and steady state instability, trajectories still do not approach $\partial \mathbb{R}_{> 0}^m$. He conjectured the following, which was adapted to its current form by Gheorghe Craciun {\it et al.} in \cite{C-N-P}:
\begin{conjecture}[Persistence Conjecture]
\label{wrc}
Every weakly reversible mass action system is persistent.
\end{conjecture}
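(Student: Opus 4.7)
My plan is to attack the conjecture by routing through the endotactic property that is the focus of this paper. First, I would verify that every weakly reversible network is endotactic. This is a convex-geometric fact about the reaction vectors at each complex: in a weakly reversible network every complex lies on a directed cycle, so the collection of reaction vectors out of the complexes of any linkage class positively spans the stoichiometric subspace, which prevents them from all being strictly separated from the reverse direction of any ``escape'' vector. Combined with the conservation structure this yields the endotactic condition. This step reduces Conjecture \ref{wrc} to the (still conjectural) claim that endotactic mass-action systems are persistent.

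For the second step I would adapt the Lyapunov-style argument used for strongly endotactic networks. Suppose for contradiction that a positive trajectory $x(t)$ has an $\omega$-limit point on $\partial \mathbb{R}_{>0}^m$. Standard reductions produce a minimal siphon $W \subseteq \{1,\ldots,m\}$ and a face $F_W = \{x : x_i = 0 \text{ for } i \in W\}$ on which the trajectory accumulates. I would look for a local Lyapunov-type functional of the form
\[
V_W(x) \;=\; \sum_{i \in W} c_i \log x_i,
\]
with weights $c_i > 0$ chosen so that the endotactic hypothesis forces $\dot V_W(x) \geq \varepsilon > 0$ whenever $x$ lies in a suitable neighborhood of $F_W$ inside $\mathbb{R}_{>0}^m$. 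Integrating $\dot V_W$ along the trajectory would then force $V_W(x(t)) \to +\infty$, contradicting the assumed accumulation on $F_W$.

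The main obstacle is precisely the last step, and it is the reason this remains a conjecture. In the strongly endotactic case there is a single global choice of weights that simultaneously dominates reactions pointing toward every boundary face, which is what makes the Gopalkrishnan--Miller--Shiu Lyapunov argument go through. For merely endotactic networks each face $F_W$ may only admit its own local certificate, and a trajectory that repeatedly migrates between neighborhoods of distinct minimal siphons can defeat any individual $V_W$; higher-codimension faces where several siphons interact are especially delicate. A secondary quantitative difficulty is obtaining the strict lower bound $\dot V_W \geq \varepsilon$ uniformly in a neighborhood of $F_W$, which requires controlling the concentrations of species outside $W$ along the trajectory. In view of these obstructions, the MILP framework developed here is the natural companion contribution: even without a full resolution of Conjecture \ref{wrc}, certifying strong endotacticity of a specific network immediately yields persistence via the established results, and certifying endotacticity yields persistence conditional on the endotactic case of the conjecture.
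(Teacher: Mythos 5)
The statement you were asked to prove is Conjecture \ref{wrc}, which the paper does not prove and explicitly presents as open: it is known only in special cases (stoichiometric subspace of dimension at most two, via Theorem \ref{theorem1}, and the strongly endotactic case, via Theorem \ref{theorem2}), and the paper's whole point is to give computational certificates of (strong) endotacticity precisely because the general endotactic and weakly reversible cases remain unresolved. So there is no proof in the paper to compare yours against, and your proposal must be judged as a standalone argument.

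As such, it has a genuine gap, which to your credit you identify yourself: the second step is not an argument but a statement of the open problem. Your first step (weakly reversible $\Rightarrow$ endotactic) is a correct and standard reduction, although your justification via ``positive spanning of the stoichiometric subspace'' is not quite the right mechanism --- the clean argument is that a reaction $y \to y'$ with $\mathbf{w}\cdot(y'-y)<0$ lies on a directed cycle, the increments of $\mathbf{w}\cdot(\cdot)$ around the cycle sum to zero, and the reaction leaving the last complex of the cycle attaining the minimal value of $\mathbf{w}\cdot(\cdot)$ supplies the required $R_j$ with $\mathbf{w}\cdot(y_{\rho(j)}-y_{\rho(i)})<0$ and $\mathbf{w}\cdot(y_{\rho'(j)}-y_{\rho(j)})>0$. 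But the second step --- producing a functional $V_W=\sum_{i\in W}c_i\log x_i$ with $\dot V_W\geq\varepsilon$ near a boundary face from the endotactic hypothesis alone --- is exactly where every known attempt fails, for the reasons you list: merely endotactic networks admit only face-local certificates, trajectories can shuttle between neighborhoods of distinct siphons, and the uniform strict lower bound on $\dot V_W$ requires control of the off-$W$ coordinates that the hypothesis does not provide. A sketch that terminates in ``and here is why this remains open'' is not a proof; the correct conclusion is that Conjecture \ref{wrc} is not established by this proposal, nor by the paper, and that only its strongly endotactic and low-dimensional special cases are currently theorems.
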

\noindent What is striking about Conjecture \ref{wrc} is that, unlike the assumption of complex-balancing in Conjecture \ref{gac}, the condition of weak reversibility does not depend on the rate parameters. Attempts to affirm Conjecture \ref{wrc}, which would affirm Conjecture \ref{gac} as a corollary, have subsequently been {\it geometric} in nature. Specifically, weakly reversible mass action systems have the property that, whenever a trajectory approaches a face of $\mathbb{R}_{>0}^m$, the reaction vector corresponding to the largest reaction rate must necessarily point ``away'' from the face. This is commonly thought, on geometric grounds, to be sufficient to divert trajectories away from the boundary for all time.

It was further noted by Craciun {\it et al.} in \cite{C-N-P} that this geometric approach was not restricted to weakly reversible mass action systems; in fact, it does not depend on the reaction graph having direct connections at all. The authors made the following extensions: (1) weak reversibility was generalized to {\it endotacticity} of reaction networks, where the reaction vectors are required to point ``inward'' but are not required to connect in cycles; (2) mass action kinetics was generalized to {\it $\kappa$-variable} kinetics, where the rate parameters are allowed to vary with time within a compact region away from zero; and (3) persistence was strengthened to {\it permanence}, whereby there is a compact set bounded away from $\partial \mathbb{R}_{> 0}^m$ such that all initially positive trajectories are eventually contained with the set. They conjectured the following:
\begin{conjecture}[Permanence Conjecture]
\label{conj}
Every endotactic $\kappa$-variable mass action system is permanent.
\end{conjecture}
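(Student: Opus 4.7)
The plan is to attack the conjecture geometrically, stratifying the boundary $\partial \mathbb{R}_{>0}^m$ by its coordinate faces and constructing, on each stratum, a repelling certificate from the endotactic condition. Concretely, for a trajectory $x(t)$ approaching a face $F$ corresponding to a coordinate subspace, I would introduce logarithmic coordinates $y_i = \log x_i$ so that ``approaching $F$'' means some components $y_i \to -\infty$. The reaction rates, which are monomials in $x$, become exponentials in $y$; as $\|y\| \to \infty$ in a fixed direction $w$ (an outer conormal to $F$), the dominant reactions are precisely those whose source complexes maximize $\langle w, \cdot \rangle$ on the support set of sources. The endotactic hypothesis asserts exactly that, among these $w$-maximal sources, at least one carries a reaction vector $v$ with $\langle w, v \rangle < 0$, i.e.\ pointing back into the interior. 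Together with the $\kappa$-variable assumption (rates bounded in a compact interval away from zero), this gives a pointwise geometric ``repelling'' statement.

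The key technical step is to upgrade this pointwise geometry into a global dynamical permanence statement. The natural vehicle is a Lyapunov-type function $V(x)$ adapted to the tropical/Newton-polytope structure of the source monomials: a piecewise-linear function in $y$ of the form $V(y) = \max_{s \in \mathcal{S}} \bigl(\langle u_s, y\rangle + c_s\bigr)$ for appropriately chosen ``test vectors'' $u_s$ and offsets $c_s$. On each polyhedral region of $y$-space, $V$ agrees with a single linear functional, and the endotactic condition should force $\dot V \le -\delta < 0$ whenever $\|y\|$ is large in a boundary-approaching direction. Patching these regional estimates into a single proper function whose sublevel sets capture permanence is where toric/tropical methods seem essential; one would expect the construction to parallel the one used for strongly endotactic networks, but with additional flexibility to handle directions $w$ that are not ``universally repelled'' by a single drift vector.

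I would proceed inductively on the dimension of the stoichiometric subspace, reducing to lower-dimensional faces by restricting to the induced $\kappa$-variable dynamics on each coordinate stratum, and use an $\omega$-limit set argument to transfer persistence (no $\omega$-limit points on $\partial \mathbb{R}_{>0}^m$) to permanence (a uniform compact attractor), invoking the $\kappa$-variable bounds to obtain the uniform large-$\|x\|$ bound that prevents escape to infinity along unbounded rays.

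The main obstacle, and the reason the conjecture remains open beyond the strongly endotactic setting, is precisely the weakness of the endotactic condition compared to strong endotacticity: in the latter, every $w$-maximal source carries an inward-pointing reaction vector, which makes the ``worst case'' near a face uniformly controlled; in the general endotactic case, the repelling reaction need only dominate in one particular direction, and competing reactions whose source complexes are $w$-maximal but whose reaction vectors are outward-pointing or tangential can dominate along nearby directions. Ruling out ``sliding'' behavior along the boundary, in which a trajectory threads between facets where different reactions dominate, appears to require a multi-scale asymptotic analysis that tracks not just the leading monomial order but subleading balances; this is the delicate combinatorial-geometric core where I expect the bulk of the work, and the role of the MILP framework developed here is to make such a multi-scale analysis algorithmically tractable on concrete examples.
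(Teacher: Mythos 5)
This statement is a \emph{conjecture}, not a theorem of the paper: the authors give no proof, and they explicitly record that it remains open, being known only in special cases (networks with two-dimensional stoichiometric subspace \cite{C-N-P,Pa}, and strongly endotactic networks \cite{Gopal2014}). So the only question is whether your attempt actually resolves it, and it does not. What you have written is a program, not a proof, and it is essentially the \emph{known} program: logarithmic coordinates, identification of the $\mathbf{w}$-dominant source complexes, and a piecewise-linear (tropical) Lyapunov function $V(y)=\max_s(\langle u_s,y\rangle+c_s)$ is precisely the architecture of \cite{C-N-P} and \cite{Gopal2014}. The entire content of the conjecture is concentrated in the step you explicitly defer to future work: constructing the test vectors $u_s$ and offsets $c_s$ so that $\dot V\le-\delta$ is forced by mere endotacticity, where the inward-pointing reaction is only guaranteed to exist at \emph{some} source swept before the offending one, not at the extreme source. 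Your final paragraph concedes that ruling out ``sliding'' between regimes where different reactions dominate ``appears to require a multi-scale asymptotic analysis'' that you do not carry out. A proof cannot end by locating where the bulk of the work lies.

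Two further specific points would fail even as stated. First, the proposed induction on coordinate strata is unsound as described: the restriction of an endotactic network to a coordinate face need not be endotactic (this is exactly why the paper must introduce the separate notion of \emph{lower endotactic} networks and why Theorem \ref{theorem1} hypothesizes lower-endotactic \emph{subnetworks}), so the inductive hypothesis does not propagate. Second, the claim that an $\omega$-limit-set argument ``transfers persistence to permanence'' with the help of the $\kappa$-variable bounds is not automatic: the paper's own Lotka--Volterra example (Example \ref{lotkavolterra}) is persistent but not permanent, so the uniform, compatibility-class-wide $\epsilon$ in Definition \ref{permanent} requires a genuinely quantitative estimate that does not follow from the qualitative exclusion of boundary $\omega$-limit points.
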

\noindent Proving Conjecture \ref{conj}, which is sufficient to prove Conjectures \ref{wrc} and \ref{gac} as corollaries, has subsequently become a focus of research. Conjecture \ref{conj} was shown by Craciun {\it et al.} in \cite{C-N-P} to hold for two-dimensional systems, a result which was extended by Pantea in \cite{Pa} to networks with a two-dimensional stoichiometric subspaces. The stronger notion of a network being {\it strongly endotactic} was introduced by Manoj Gopalkrishnan {\it et al.} in \cite{Gopal2014}. The authors there were able to affirm Conjecture \ref{conj} for all strongly endotactic networks.

Despite the recent work on endotactic networks and its known connections with several widely-studied open problems in CRNT, to date there does not exist a comprehensive method by which to determine whether a given reaction network is or is not endotactic. The strongest result along these lines is contained in the original paper \cite{C-N-P}, where the authors give necessary and sufficient conditions for the endotacticity of {\it two species networks only}. The method requires enumerating the faces of a particular convex polytope and then testing a condition on each such face. While a generalization of this result to higher dimensions appears to be a natural extension, the approach would be difficult to implement in practice and computationally-intensive for many species problems.

In this paper, we take an alternative approach to algorithmizing the process of determining whether a network is or is not endotactic. We formulate the problem within the framework of mixed-integer linear programming (MILP), which has already been used extensively in CRNT for determining dynamically equivalent and linearly conjugate realizations of a given kinetics satisfying a variety of structural constraints, including weak reversibility (G\'{a}bor Szederk\'{e}nyi {\it et al.}, \cite{Sz2,Sz-H-T,J-S4}). The algorithm we present here is able to definitively determine whether or not a network is endotactic or strongly endotactic.

We furthermore implement the algorithm in the open source software program {\sf CoNtRol} introduced by Pete Donnell {\it et al.} \cite{D-B-M-P} and use it to establish permanence of several many species networks which have been studied in the biochemical literature, including the $n$-site processive and distributive phosphorylation / dephosphorylation networks modeled with the quasi-steady state assumption, and a well-studied circadian clock mechanism (see Leloup and Goldbeter \cite{L-G} and Pokhilko {\it et al.} \cite{Pokhilko2010,Pokhilko2013}). This latter network was determined to be a network of interest through wide-scale application of the algorithm on the European Bioinformatics Institute's BioModels Database \cite{BioModels}.

\section{Background}


\subsection{Chemical reaction networks}
\label{crnsection}

The basic object of study in CRNT is a triple $(\mathcal{S},\mathcal{C},\mathcal{R})$ known as a {\it chemical reaction network}, where the sets are as follows:
\begin{enumerate}
\item[(1)]
The {\it species set} $\mathcal{S} = \{ X_1, \ldots, X_m \}$ consists of the basic species/reactants capable of undergoing chemical change.
\item[(2)]
The {\it complex set} $\mathcal{C} = \{ y_1, \ldots, y_n \}$ consists of formal linear combinations of species of the form
\[y_i = \sum_{j=1}^m y_{ij} X_j, \; \; \; \; \; i=1,\ldots,n\]
where the constants $y_{ij} \in \mathbb{R}$ are called {\it stoichiometric coefficients}. Allowing for a slight abuse of notation, we will let $y_i$ denote both the complex itself and the corresponding support vector $y_i = (y_{i1},y_{i2},\ldots,y_{im})$ which determines the multiplicity of each species within the corresponding complex. The complex with zero support is called the {\em zero complex} and denoted by $y_i = 0$. 
\item[(3)]
The {\it reaction set} $\mathcal{R} = \{ R_1, \ldots, R_r \}$ consists of elementary reactions of the form
\[R_k: \; \; \; y_{\rho(k)} \longrightarrow y_{\rho'(k)}, \; \; \; k=1, \ldots, r\]
where $\rho : \mathcal{R} \mapsto \mathcal{C}$ and $\rho' : \mathcal{R} \mapsto \mathcal{C}$. We say $y_{i}$ is the {\it source complex} of the $k^{th}$ reaction if $\rho(k)=i$. Similarly, we say $y_{j}$ is the {\it target complex} of the $k^{th}$ reaction if $\rho'(k) = j$. Reactions may alternatively be represented as ordered pairs of complexes, e.g. $R_k = (y_i,y_j)$ if $y_i \to y_j \in \mathcal{R}$.
\end{enumerate}

It is common within the CRNT literature to assume that (i) every species is in the support of at least one complex, (ii) every complex is involved in at least one reaction, and (iii) there are no self-loop reactions (i.e. reactions of the form $y_i \to y_i$). It is also common to assume the stoichiometric coefficients only take integer values. Our consideration of projected and single species networks in Section \ref{endotacticnetworkssection}, however, will force us to take some liberties with these assumptions. In particular, we will allow self-loops and non-integer stoichiometric coefficients.




Chemical reaction networks naturally give rise to a directed graph $G(V,E)$ where the vertices are given by the complexes (i.e. $V = \mathcal{C}$) and the edges are given by the reactions (i.e. $E = \mathcal{R}$). The connected components of $G$ are called {\em linkage classes}. We say that there is a {\it path} from complex $C_i$ to complex $y_j$ (denoted $y_i \leadsto y_j$) if there exists a sequence of complexes such that $y_i = y_{\mu(1)} \rightarrow y_{\mu(2)} \rightarrow \cdots \rightarrow y_{\mu(l)} = y_j$. A network is said to be {\it reversible} if $y_i \to y_j$ implies $y_j \to y_i$, and {\it weakly reversible} if $y_i \leadsto y_j$ implies $y_j \leadsto y_i$. To each reaction $R_k$, $k=1,\ldots, r$, we can associate the {\it reaction vector} $y_{\rho'(k)} - y_{\rho(k)} \in \mathbb{R}^m$. We define the {\it stoichiometric subspace} to be $S = \mbox{span} \{ y_{\rho'(k)} - y_{\rho(k)} \; | \; k=1,\ldots, r \}$ and assign $s = $ dim$(S)$.

For example, the network
\begin{equation}
\label{toyexample}
\begin{array}{c}
\displaystyle{X_1 \; \mathop{\stackrel{1}{\rightleftarrows}}_{2} \; 2X_2} \vspace{-0.05in} \\
\displaystyle{{}_{4} \nwarrow \; \; \; \swarrow_{3}} \; \; \\
X_2 +X_3 \; \;
\end{array}
\end{equation}
is weakly reversible but not reversible, since $2X_2 \to X_2 + X_3$ and $X_2 + X_3 \not\to 2X_2$, but $X_2 + X_3 \to X_1 \to 2X_2$ so that $X_2 + X_3 \leadsto 2X_2$. We have the reaction vectors $y_{\rho'(1)}-y_{\rho(1)} = (-1,2,0)$, $y_{\rho'(2)}-y_{\rho(2)} = (1,-2,0)$, $y_{\rho'(3)}-y_{\rho(3)} = (0,-1,1)$, and $y_{\rho'(4)}-y_{\rho(4)} = (1,-1,-1)$. We can easily determine that $s= \mbox{dim}(S) = 2$.

\subsection{$\kappa$-variable mass action systems}
\label{desection}

In order to determine how the concentrations $x_i = [X_i]$, $i=1,\ldots,m$, evolve over time, a {\it rate function} must be defined for each reaction. It is common to assume that the network obeys mass action kinetics whereby the rate of each reaction is proportional to the product of the concentrations of the reactant species. For instance, if the reaction is of the form $X_1 + X_2 \to \cdots$ then we would have rate $=\kappa [X_1][X_2] = \kappa x_1x_2$.

An extension of mass action kinetics is {\it $\kappa$-variable mass action kinetics}, whereby the rate parameters are allowed to vary in time within a compact region bounded away from zero \cite{C-N-P}. In this framework, we instead have the rate $=\kappa(t)x_1x_2$ where $\kappa(t) \in [\eta, 1/\eta]$, $\eta > 0$. This generalization allows for changes in the reaction rate due to external factors like changes in heat and pressure and is also useful for model and dimension reduction techniques \cite{A4,Gopal2014}. Other commonly used kinetics forms include Michaelis-Menten kinetics and Hill kinetics \cite{M-M,Hi}, which we will consider further in Section \ref{applicationsection}.

Throughout this paper, we will assume that the evolution of the concentration vector $\mathbf{x} = (x_1, \ldots, x_m) \in \mathbb{R}_{\geq 0}^m$ is given by the {\it $\kappa$-variable mass action system}
\begin{equation}
\label{de}
\frac{d\mathbf{x}(t)}{dt} = \sum_{k=1}^r \kappa_k(t) (y_{\rho'(k)}-y_{\rho(k)}) \mathbf{x}(t)^{y_{\rho(k)}}
\end{equation}
where $\mathbf{x}(t)^{y_{\rho(k)}} = \prod_{i=1}^m x_i(t)^{y_{\rho(k)i}}$.
It is clear that mass action kinetics is a special case of $\kappa$-variable mass action kinetics, taking $\kappa_k(t) = \kappa_k$ for all $k=1, \ldots, r$. Also note that $d\mathbf{x}/dt \in S$ for all $t \geq 0$; since, moreover, all initially positive solutions of (\ref{de}) stay positive, solutions are restricted to {\it stoichiometric compatibility classes}, $(\mathbf{a} + S) \cap \mathbb{R}_{> 0}^m$ where $\mathbf{a} \in \mathbb{R}^m$. That is, if $\mathbf{x}_0 \in (\mathbf{a} + S) \cap \mathbb{R}_{> 0}^m$ then $\mathbf{x}(t) \in (\mathbf{a} + S) \cap \mathbb{R}_{> 0}^m$ for $t \geq 0$.

\subsection{Persistence and permanence}
\label{persistencesection}

It is often desirable to guarantee that no coordinate of a solution $\mathbf{x}(t)$ of (\ref{de}) approaches zero, either asymptotically or along a subsequence, as this corresponds to a species approaching extinction. This behavior is captured by the notion of {\em persistence} \cite{persistWaltman, hopfbauerPerm, LV}:
\begin{definition}
\label{persistence}
We say that a trajectory $\mathbf{x}(t)$ of (\ref{de}) with $\mathbf{x}(0) \in \mathbb{R}_{>0}^m$ is \textbf{persistent} if
\[\liminf_{t \to \infty} x_i(t) > 0 \; \; \; \mbox{ for all } i=1,\ldots,m.\]
The system itself is said to be \textbf{persistent} if every trajectory $\mathbf{x}(t)$ with $\mathbf{x}(0) \in \mathbb{R}_{>0}^m$ is persistent.
\end{definition}
\noindent Persistence is closely tied to the following classical notation from dynamical systems theory.
\begin{definition}
\label{omegalimitset}
We say that $\mathbf{x}^* \in \mathbb{R}_{\geq 0}^m$ is an \textbf{$\omega$-limit point} of a trajectory $\mathbf{x}(t)$ of (\ref{de}) is there exists a subsequence of times $t_k$, $k=1,\ldots,m$, such that
\[\lim_{k \to \infty} t_k = \infty \; \; \; \mbox{ and } \; \; \; \lim_{k \to \infty} \mathbf{x}(t_k) = \mathbf{x}^*.\]
The set of all \textbf{$\omega$-limit points} of a given trajectory $\mathbf{x}(t)$ with initial condition $\mathbf{x}(0)=\mathbf{x}_0$ is denoted $\omega(\mathbf{x}_0)$.
\end{definition}
\noindent If the trajectories $\mathbf{x}(t)$ of (\ref{de}) are known to be bounded, then the system is persistent if and only if $\omega(\mathbf{x}(0)) \cap \partial\mathbb{R}_{>0}^m = \O$ for all $\mathbf{x}(0) \in \mathbb{R}_{> 0}^m$.

While persistence does not allow trajectories to approach $\partial \mathbb{R}_{>0}^m$, it does not preclude the possibility of trajectories staying close to $\partial \mathbb{R}_{>0}^m$ if they start close. It was noted by Craciun {\it et al.} in \cite{C-N-P} that many networks of interest exhibit {\em permanent} behavior \cite{hopfbauerPerm, LV} which limits how trajectories starting close to $\partial \mathbb{R}_{> 0}^m$ behave.

\begin{definition}
\label{permanent} We say that the system (\ref{de}) is \textbf{permanent} if, for every positive stoichiometric compatibility class $(\mathbf{a} + S) \cap \mathbb{R}_{> 0}^m$, there exists an $\epsilon > 0$ such that, for every $\mathbf{x}_0 \in (\mathbf{a} + S) \cap \mathbb{R}_{> 0}^m$, the solution ${\mathbf x}(t)$ of (\ref{de}) with 
${\mathbf x}(0)={\mathbf x}_0$ satisfies
\[\liminf_{t \to \infty} x_i(t) > \epsilon \; \; \; \mbox{ and } \; \; \; \limsup_{t \to \infty} x_i(t) < 1/\epsilon \; \; \; \mbox{ for all } i=1,\ldots,m.\]
\end{definition}
\noindent That is, trajectories must eventually converge to a stoichiometrically compatible compact region which is strictly buffered from $\partial \mathbb{R}_{>0}^m$. To highlight the distinction between persistence and permanence, consider the following example.

\begin{example}
\label{lotkavolterra}
Consider the following chemical reaction network:
\begin{equation}
\label{lv}
\left\{ \begin{array}{l}
X \; \stackrel{k_1}{\longrightarrow} \; 2X \\
X + Y \; \stackrel{k_2}{\longrightarrow} \; 2Y \\ 
Y \; \stackrel{k_3}{\longrightarrow} \; 0
\end{array} \right.
\end{equation}
This network (\ref{lv}) corresponds to the well-studied Lotka-Volterra predator-prey model with $X$ as the prey and $Y$ as the predator. Assuming mass action kinetics with fixed rate constants, the trajectories of the kinetic system (\ref{de}) are cycles which follow
\begin{equation}
\label{levelcurve}
L(x(t),y(t)) = k_3 \ln(x(t)) + k_1 \ln(y(t)) - k_2(x(t)+y(t)) = \mbox{constant}.
\end{equation}
Since the level sets of this function never intersect $\partial\mathbb{R}_{>0}^m$, it follows that the network is persistent; however, the system is not permanent since, for any $\epsilon > 0$, we may pick an initial condition so that $x_i(0) < \epsilon$ and are guaranteed that the trajectory will return to this state infinitely often. We may interpret this as saying that, although the trajectory does not tend toward extinction, it is not able to buffer itself from  extinction.


\end{example}

\section{Endotactic networks}
\label{endotacticnetworkssection}

It has become common in recent years to approach the question of persistence and permanence of chemical kinetic systems (\ref{de}) as one of geometry rather than one of algebra. Note that each reaction in (\ref{de}) gives a term of the form
\begin{equation}
\label{term}
\left[ \kappa_k(t) \mathbf{x}(t)^{y_{\rho(k)}} \right] \cdot (y_{\rho'(k)} - y_{\rho(k)})
\end{equation}
where $\kappa_k(t) \mathbf{x}(t)^{y_{\rho(k)}}$ is a scalar for all $t \geq 0$ and $\mathbf{x} \in \mathbb{R}_{> 0}^m$. Near $\partial \mathbb{R}_{> 0}^m$, this coefficient will be small for any reaction which depends on a species which is close to extinction. The corresponding reaction vector $(y_{\rho'(k)} - y_{\rho(k)})$ will not factor significantly in (\ref{de}). It is intuitive to suppose, therefore, that if the reaction with the dominant rate near a particular boundary region corresponds to a reaction vector which directs solutions away from boundary, then the solutions may not approach that boundary region over time.


One classification of systems where this desired property holds trivially is for weakly reversible mass action systems. (See Conjecture \ref{wrc} in Section \ref{introduction}.) In \cite{C-N-P}, Craciun {\it et al.} identified a broader class of reaction networks for which the dominating rates were guarantee to correspond to reactions which were pointing away from the boundary. They called such networks {\it endotactic networks}, a notion which later inspired the class of {\it strongly endotactic networks} introduced by Gopalkrishnan {\it et al.} in \cite{Gopal2014}.


\begin{definition}
\label{endotactic}
A chemical reaction network is said to be:
\begin{enumerate}
\item
\textbf{endotactic} if, for every $\mathbf{w} \in \mathbb{R}^m$ and every $R_i \in \mathcal{R}$, $\mathbf{w} \cdot (y_{\rho'(i)} - y_{\rho(i)}) < 0$ implies that there is a $R_j \in \mathcal{R}$ such that $\mathbf{w} \cdot (y_{\rho(j)} - y_{\rho(i)}) < 0$ and $\mathbf{w} \cdot (y_{\rho'(j)} - y_{\rho(j)}) > 0$.
\item
\textbf{lower endotactic} if condition $1.$ holds for every $\mathbf{w} \in \mathbb{R}^m_{\geq 0}$ instead of $\mathbf{w} \in \mathbb{R}^m$.
\item
\textbf{strongly endotactic} if condition $1.$ holds and $R_j \in \mathcal{R}$ can be chosen so that $\mathbf{w} \cdot (y_{\rho(k)} - y_{\rho(j)}) \geq 0$ for all $R_k \in \mathcal{R}$.
\end{enumerate}
\end{definition}

\noindent Definition \ref{endotactic} differs from that given in the papers \cite{C-N-P,Pa,Gopal2014}, but it can easily be checked that our formulation is equivalent. It is also clear that if a network is not lower endotactic, it is not endotactic, and if it is not endotactic, then it is not strongly endotactic, i.e. the following inclusions hold:
\[\mbox{strongly endotactic} \; \subseteq \; \mbox{endotactic} \; \subseteq \; \mbox{lower endotactic}.\]


\subsection{Single-species and projected networks}
\label{projectedsection}

To verify whether a network is endotactic, it is useful to represent complexes as points in $\mathbb{R}^m$ and reactions as arrows between these points (Figure \ref{fig1}). We may then determine whether a network is endotactic or not by conducting a ``parallel sweep test'' \cite{C-N-P}. To conduct this test, we sweep through the space of complexes $\mathbb{R}^m$ with hyperplanes orthogonal to a representative sample of vectors $\mathbf{w} \in \mathbb{R}^m$. In order to be endotactic, it must be the case that for every sweeping direction, whenever a reaction vectors points against this direction, we have already ``swept through'' a reaction from a source complex which points strictly in the same direction. In order to be strongly endotactic, it must furthermore be the case that there is a reaction which points strictly in the same direction which comes from the first set of source complexes which was swept through.
\begin{center}
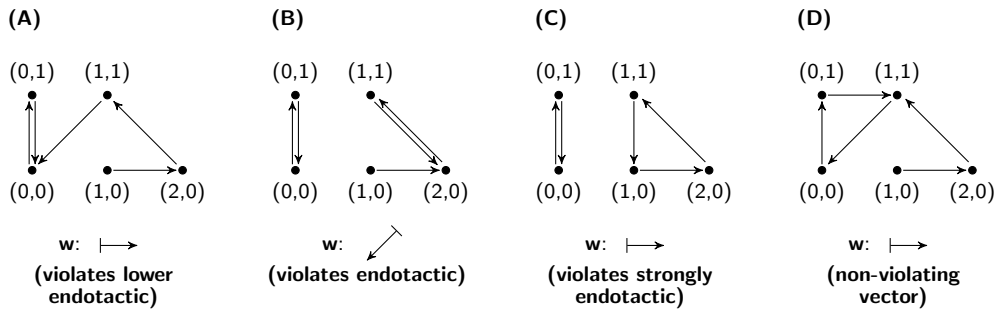
\begin{figure}[t]
\begin{tikzpicture}
\tikzstyle{stable}=[circle,draw=black,fill=black, minimum size=1mm, inner sep=0pt]
\tikzstyle{unstable}=[circle,draw=black,fill=white, minimum size=1mm, inner sep=0pt]
\node {};
\begin{scope}[xshift=2cm]
\begin{scope}[xshift=0cm]
\node [stable] at (0,0) {};
\node [stable] at (1,0) {};
\node [stable] at (1,-1) {};
\node [stable] at (0,-1) {};
\node [stable] at (2,-1) {};
\draw[->, >=stealth'] (0.04,-0.08) -- (0.04,-0.92);
\draw[->, >=stealth'] (-0.04,-0.92) -- (-0.04,-0.08);
\draw[->, >=stealth'] (0.92,-0.08) -- (0.08,-0.92);
\draw[->, >=stealth'] (1.08,-1) -- (1.92,-1);
\draw[->, >=stealth'] (1.92,-0.92) -- (1.08,-0.08);
\draw[|->, >=stealth'] (0.9,-2) -- (1.4,-2);
\node at (0.5,-2) {\scriptsize\sf{\bf w}:};
\node at (0.95,-2.4) {\scriptsize\sf{\bf (violates lower  }};
\node at (0.95,-2.7) {\scriptsize\sf{\bf   endotactic)}};
\end{scope}
\begin{scope}[xshift=3.5cm]
\node [stable] at (0,0) {};
\node [stable] at (1,0) {};
\node [stable] at (1,-1) {};
\node [stable] at (0,-1) {};
\node [stable] at (2,-1) {};
\draw[->, >=stealth'] (0.04,-0.08) -- (0.04,-0.92);
\draw[->, >=stealth'] (-0.04,-0.92) -- (-0.04,-0.08);
\draw[->, >=stealth'] (1.05,-0.09) -- (1.89,-0.93);
\draw[->, >=stealth'] (1.08,-1) -- (1.92,-1);
\draw[->, >=stealth'] (1.95,-0.89) -- (1.11,-0.05);
\draw[|->, >=stealth'] (1.35,-1.8) -- (0.95,-2.2);
\node at (0.5,-2.0) {\scriptsize\sf{\bf w}:};
\node at (0.95,-2.4) {\scriptsize\sf{\bf (violates endotactic)}};
\node at (0.95,-2.7) {\scriptsize\sf{\bf }};
\end{scope}
\begin{scope}[xshift=7cm]
\node [stable] at (0,0) {};
\node [stable] at (1,0) {};
\node [stable] at (1,-1) {};
\node [stable] at (0,-1) {};
\node [stable] at (2,-1) {};
\draw[->, >=stealth'] (0.04,-0.08) -- (0.04,-0.92);
\draw[->, >=stealth'] (-0.04,-0.92) -- (-0.04,-0.08);
\draw[->, >=stealth'] (1,-0.08) -- (1,-0.92);
\draw[->, >=stealth'] (1.08,-1) -- (1.92,-1);
\draw[->, >=stealth'] (1.95,-0.89) -- (1.11,-0.05);
\draw[|->, >=stealth'] (0.9,-2) -- (1.4,-2);
\node at (0.5,-2) {\scriptsize\sf{\bf w}:};
\node at (0.95,-2.4) {\scriptsize\sf{\bf (violates strongly}};
\node at (0.95,-2.7) {\scriptsize\sf{\bf endotactic)}};
\end{scope}
\begin{scope}[xshift=10.5cm]
\node [stable] at (0,0) {};
\node [stable] at (1,0) {};
\node [stable] at (1,-1) {};
\node [stable] at (0,-1) {};
\node [stable] at (2,-1) {};
\draw[->, >=stealth'] (0.08,0) -- (0.92,0);
\draw[->, >=stealth'] (0,-0.92) -- (0,-0.08);
\draw[->, >=stealth'] (0.92,-0.08) -- (0.08,-0.92);
\draw[->, >=stealth'] (1.08,-1) -- (1.92,-1);
\draw[->, >=stealth'] (1.95,-0.89) -- (1.11,-0.05);
\draw[|->, >=stealth'] (0.9,-2) -- (1.4,-2);
\node at (0.5,-2) {\scriptsize\sf{\bf w}:};
\node at (0.95,-2.4) {\scriptsize\sf{\bf (non-violating}};
\node at (0.95,-2.7) {\scriptsize\sf{\bf vector)}};
\end{scope}
\begin{scope}
\scriptsize\sf
\node at (0,0.3) {(0,1)};
\node at (1,0.3) {(1,1)};
\node at (0,-1.3) {(0,0)};
\node at (1,-1.3) {(1,0)};
\node at (2,-1.3) {(2,0)};
\node at (3.5,0.3) {(0,1)};
\node at (4.5,0.3) {(1,1)};
\node at (3.5,-1.3) {(0,0)};
\node at (4.5,-1.3) {(1,0)};
\node at (5.5,-1.3) {(2,0)};
\node at (7,0.3) {(0,1)};
\node at (8,0.3) {(1,1)};
\node at (7,-1.3) {(0,0)};
\node at (8,-1.3) {(1,0)};
\node at (9,-1.3) {(2,0)};
\node at (10.5,0.3) {(0,1)};
\node at (11.5,0.3) {(1,1)};
\node at (10.5,-1.3) {(0,0)};
\node at (11.5,-1.3) {(1,0)};
\node at (12.5,-1.3) {(2,0)};
\node at (-.1,1) {\bf(A)};
\node at (3.4,1) {\bf(B)};
\node at (6.9,1) {\bf(C)};
\node at (10.4,1) {\bf(D)};
\end{scope}
\end{scope}
\end{tikzpicture}
\caption{\small Four example networks with complexes and reactions represented by their support vectors in $\mathbb{R}^2$. For example, the edge $(2,0) \to (1,1)$ present in all four networks corresponds to the reaction $2X_1 \to X_1 + X_2$. By the parallel sweep test, we have that (A) is not lower endotactic, (B) is lower endotactic but not endotactic, (C) is endotactic but not strongly endotactic, and (D) is strongly endotactic. Where relevant, the violating vectors $\mathbf{w}$ are indicated below the network.}\label{fig1}
\end{figure}
\end{center}

\vspace{-1.02cm}
It is worth noting that, for a given $\mathbf{w} \in \mathbb{R}^m$, the action of sweeping through the source complexes in $\mathbb{R}^m$ is reduced to a one-dimensional problem. In fact, we may always correspond such a problem to an analogous network problem involving only a single species. We will call such networks {\it single species networks}. We also introduce the following.

\begin{definition}\label{def:proj}
Consider a chemical reaction network and a unit vector $\mathbf w \in \mathbb{R}^m$. Let $\pi_{\mathbf w}: \mathbb{R}^{m} \mapsto \mathbb{R}$ denote the orthogonal projection onto $\mathbf w$, i.e. $\pi_{\mathbf{w}}(\mathbf{y}) = \mathbf{w} \cdot \mathbf{y}$ for $\mathbf{y} \in \mathbb{R}^m$. We define the \textbf{$\mathbf w$-projected network} $\pi_{\mathbf w}(\cal R)$
to be the single species network with complexes ${\cal C}_{\mathbf w}=\{\pi_{\mathbf w}(y) \; | \; y\in {\cal C}\}$ and reactions $\mathcal{R}_{\mathbf{w}}=\{\pi_{\mathbf w}(y)\to \pi_{\mathbf w}(y') \; | \; y\to y'\in {\cal R}\}.$ 
\end{definition}

It is worth noting that $\mathbf w$-projected networks may contain self-loop reactions and have non-integer stoichiometric coefficients. This is typically disallowed in CRNT but will be permitted in our present context. We define the {\em proper subnetwork} of a single species network to be the network where all self-loop reactions and source complexes involved in only self-loop reactions have been removed. Given this allowance, the proper subnetwork may be an empty network; however, if the proper subnetwork is not empty, then it contains at least two complexes since a network may have a single complex if and only if it consists of one self-loop reaction.


Establishing whether a single species network is endotactic or strongly endotactic can be easily done geometrically. In this case the complexes are identified with real numbers, and in Definition \ref{endotactic} there are only two directions for the vector ${\mathbf w}\in \mathbb R$, namely ${\mathbf w}=1$ and  ${\mathbf w}=-1$. It follows that a single species network is endotactic if and only if 

\begin{enumerate}
\item[($+$)]  whenever $y\to y'$ is a reaction and $y<y'$ there exists a reaction ${\bar y}\to {\bar y'}$ with ${\bar y}>{\bar y}'$ and ${\bar y}>y$; and
\item[($-$)]  whenever $y\to y'$ is a reaction and $y>y'$ there exists a reaction ${\bar y}\to {\bar y'}$ with ${\bar y}<{\bar y}'$ and ${\bar y}<y.$
\end{enumerate} 
\noindent A network is furthermore strongly endotactic if the two $\bar{y}$ above may be chosen to satisfy: $(+)$ ${\bar y} \geq \tilde{y}$ for source complexes $\tilde{y}$, and $(-)$ ${\bar y} \leq \tilde{y}$ for all source complexes $\tilde{y}$.

It is vacuously true that any empty network is endotactic and strongly endotactic, and that a single species / single complex network is endotactic and strongly endotactic if and only if it consists only of self-loops. We may characterize single species networks with two or more source complexes by establishing two {\it extreme source complexes} which yield all other source complexes as proper convex combinations. We have:


\begin{itemize}
\item[-]
\noindent {\em Strongly endotactic}: The network is strongly endotactic if and only if there is a reaction from each extreme source complex which points in the direction of the other source complexes. 
\item[-]
\noindent {\em Endotactic:} The network is endotactic if and only if its proper subnetwork is strongly endotactic.
\end{itemize}

In view of Definition \ref{def:proj} and this discussion, we may also characterize endotacticity and strong endotacticity for networks with arbitrary number of species in the following way. 

\begin{proposition}\label{prop:lineNetw} A reaction network is endotactic (strongly endotactic) if and only if, for every $\mathbf{w} \in \mathbb{R}^m$, the corresponding $\mathbf{w}$-projected network is endotactic (strongly endotactic).
\end{proposition}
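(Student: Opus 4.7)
The plan is to reduce the multi-dimensional definition of (strongly) endotactic to the one-dimensional characterization given just before the proposition, exploiting the fact that projection by $\mathbf{w}$ turns dot products with $\mathbf{w}$ into ordinary real inequalities. The key observation is that for every reaction $R_i: y_{\rho(i)} \to y_{\rho'(i)}$ in the original network, the quantity $\mathbf{w}\cdot(y_{\rho'(i)}-y_{\rho(i)})$ has the same sign as $\pi_{\mathbf{w}}(y_{\rho'(i)}) - \pi_{\mathbf{w}}(y_{\rho(i)})$, and similarly $\mathbf{w}\cdot(y_{\rho(j)}-y_{\rho(i)})$ compares the projected source complexes. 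Thus the multi-dimensional endotactic condition tested against a particular $\mathbf{w}$ translates, on the projected network, into exactly condition $(-)$ of the single-species characterization, while testing against $-\mathbf{w}$ gives condition $(+)$.

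For the forward direction, I assume the network is endotactic and fix any $\mathbf{w}\in\mathbb{R}^m$. To show that $\pi_{\mathbf{w}}(\mathcal{R})$ is endotactic, I verify conditions $(+)$ and $(-)$ separately: given a projected reaction $y\to y'$ with $y > y'$, the preimage reaction satisfies $\mathbf{w}\cdot(y_{\rho'(i)}-y_{\rho(i)}) < 0$, so the original endotacticity applied with this $\mathbf{w}$ produces $R_j$ whose projection is the required $\bar y\to\bar y'$ for $(-)$; condition $(+)$ follows by the same argument applied to the original network with vector $-\mathbf{w}$. The degenerate case $\mathbf{w}=0$ is handled separately: the projected network then consists solely of self-loops at $0$, which is endotactic by the vacuous base case noted in the text.

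For the reverse direction, I assume each $\mathbf{w}$-projected network is endotactic. To verify Definition \ref{endotactic} at an arbitrary $\mathbf{w}$, I take any $R_i$ with $\mathbf{w}\cdot(y_{\rho'(i)}-y_{\rho(i)}) < 0$; the image reaction in $\pi_{\mathbf{w}}(\mathcal{R})$ has source strictly greater than target, so condition $(-)$ of the projected network produces a reaction $\bar y\to\bar y'$ with $\bar y < \pi_{\mathbf{w}}(y_{\rho(i)})$ and $\bar y' > \bar y$. Lifting this back to the original network yields the required $R_j$. This closes the equivalence for endotactic.

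The strongly endotactic case is handled by the same correspondence with one extra bookkeeping step. In Definition \ref{endotactic}(3), the additional clause $\mathbf{w}\cdot(y_{\rho(k)}-y_{\rho(j)})\geq 0$ for all $R_k$ says exactly that $\pi_{\mathbf{w}}(y_{\rho(j)})$ is minimal among the projected source complexes, which is precisely the strongly endotactic requirement $\bar y\leq \tilde y$ in condition $(-)$ of the single-species characterization (and the corresponding condition for $(+)$ comes from testing $-\mathbf{w}$). The only point requiring care, and the one I expect to be the main bookkeeping obstacle, is tracking that the set of projected source complexes of $\pi_{\mathbf{w}}(\mathcal{R})$ is exactly $\{\pi_{\mathbf{w}}(y_{\rho(k)}) : R_k\in\mathcal{R}\}$, so that minimality in the projection corresponds unambiguously to the sign condition in $\mathbb{R}^m$; once this is spelled out, the forward and reverse implications for the strongly endotactic case go through exactly as above.
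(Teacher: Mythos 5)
Your proof is correct and follows essentially the same route as the paper's: both arguments rest on the observation that the dot-product conditions of Definition \ref{endotactic} at a fixed $\mathbf{w}$ translate verbatim into one of the one-dimensional conditions $(+)$/$(-)$ for $\pi_{\mathbf{w}}(\mathcal{R})$, with the other condition supplied by testing $-\mathbf{w}$. The only cosmetic difference is that the paper phrases the equivalence through the negation (``not endotactic iff some projection violates $(+)$'') and dismisses the strongly endotactic case as ``similar,'' whereas you argue the biconditional directly and spell out the minimality bookkeeping for the strong case, which is a slight improvement in completeness rather than a different method.
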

\begin{proof} From Definition \ref{endotactic}, a network is not endotactic if and only if there is some ${\mathbf w}\in{\mathbb R}^m$ and some reaction 
$R_i \in \mathcal{R}$ such that  $\mathbf{w} \cdot (y_{\rho'(i)} - y_{\rho(i)}) < 0$ and for any  $R_j \in \mathcal{R}$ with $\mathbf{w} \cdot (y_{\rho(j)} - y_{\rho(i)}) < 0$ we have $\mathbf{w} \cdot (y_{\rho'(j)} - y_{\rho(j)}) \le 0$. Letting ${y} =\pi_{\mathbf w}(y_{\rho(i)})$, ${y'} =\pi_{\mathbf w}(y_{\rho'(i)})$, 
${\bar y} =\pi_{\mathbf w}(y_{\rho(j)})$, ${\bar y'} =\pi_{\mathbf w}(y_{\rho'(j)}),$ this is equivalent to saying that
there is ${\mathbf w}\in{\mathbb R}^m$ such that in the $\mathbf{w}$-projected network $\pi_{\mathbf w}({\cal R})$ there is a reaction $y\to y'$ with $y'<y$ such that for any reaction ${\bar y}\to {\bar y'}$ with ${\bar y}<y$ we have ${\bar y}\ge {\bar y'},$ i.e. 
\begin{itemize}
\item[(*)] there is ${\mathbf w}\in{\mathbb R}^m$ such that in the $\mathbf{w}$-projected network $\pi_{\mathbf w}({\cal R})$ condition ($+$) is violated.  
\end{itemize}

Condition (*) implies that $\pi_{\mathbf w}({\cal R})$ is not endotactic. Conversely, if $\pi_{\mathbf w}({\cal R})$ is not endotactic for some ${\mathbf w}\in{\mathbb R}^m$ then either ($+$) or ($-$) above are violated in $\pi_{\mathbf w}({\cal R})$. As seen above, the first case amounts to (*) above and is equivalent to the original network not being endotactic. The second case is equivalent to ($+$) being false in $\pi_{-\mathbf w}({\cal R})$, which implies (*) and the fact that the original network is not endotactic.

The proof of the strongly endotactic case is similar.
\end{proof}

In Figure \ref{fig:1dEndo}, we present the $\mathbf{w}$-projected networks (I) and corresponding proper subnetworks (II) of the four networks given in Figure \ref{fig1}. We can see that the single species networks (A) and (B) in (I) are not endotactic since the left extremal complex does not have a reaction which points to the right. We can also see that the networks (C) and (D) are endotactic because their proper subnetworks in (II) are strongly endotactic. The single-species network (C), however, is not strongly endotactic because there is no reaction from the extremal left source complex which points to the right. It follows from Proposition \ref{prop:lineNetw} that (A) is not lower endotactic, (B) is not endotactic, and (C) is not strongly endotactic. It is worth noting that affirming endotacticity or strong endotacticity by Proposition \ref{prop:lineNetw} is more challenging, as this requires constructing $\mathbf{w}$-projected networks like those in Figure \ref{fig:1dEndo} for a representative sample of vectors $\mathbf{w} \in \mathbb{R}^m$.

\begin{center}
\begin{figure}[t]
\begin{tikzpicture}
\tikzstyle{stable}=[circle,draw=black,fill=black, minimum size=1mm, inner sep=0pt]
\tikzstyle{unstable}=[circle,draw=black,fill=white, minimum size=1mm, inner sep=0pt]
\node {};
\begin{scope}[xshift=2cm]
\begin{scope}[yshift=-0.5cm]
\node [stable] at (0,0) {};
\node [stable] at (1,0) {};
\node [stable] at (2,0) {};
\draw [->,>=stealth'] (0.05,0.05) .. controls (0.7,.5) and (-0.7,.5) .. (-0.05,0.05);
\draw [->,>=stealth'] (0.92,0) -- (0.08,0);
\draw [->,>=stealth'] (1.08,0.04) -- (1.92,0.04);
\draw [->,>=stealth'] (1.92,-0.04) -- (1.08,-0.04);
\end{scope}
\begin{scope}[yshift=-0.5cm, xshift=3.5cm]
\node [stable] at (0,0) {};
\node [stable] at (1,0) {};
\node [stable] at (2,0) {};
\draw [->,>=stealth'] (0.05,0.05) .. controls (0.7,.5) and (-0.7,.5) .. (-0.05,0.05);
\draw [->,>=stealth'] (0.92,0) -- (0.08,0);
\draw [->,>=stealth'] (1.08,0.04) -- (1.92,0.04);
\draw [->,>=stealth'] (1.92,-0.04) -- (1.08,-0.04);
\end{scope}
\begin{scope}[yshift=-0.5cm, xshift=7cm]
\node [stable] at (0,0) {};
\node [stable] at (1,0) {};
\node [stable] at (2,0) {};
\draw [->,>=stealth'] (0.05,0.05) .. controls (0.7,.5) and (-0.7,.5) .. (-0.05,0.05);
\draw [->,>=stealth'] (1.05,0.05) .. controls (1.7,.5) and (0.3,.5) .. (0.95,0.05);
\draw [->,>=stealth'] (1.08,0.04) -- (1.92,0.04);
\draw [->,>=stealth'] (1.92,-0.04) -- (1.08,-0.04);
\end{scope}
\begin{scope}[yshift=-0.5cm, xshift=10.5cm]
\node [stable] at (0,0) {};
\node [stable] at (1,0) {};
\node [stable] at (2,0) {};
\draw [->,>=stealth'] (0.05,0.05) .. controls (0.7,.5) and (-0.7,.5) .. (-0.05,0.05);
\draw [->,>=stealth'] (0.08,0.04) -- (0.92,0.04);
\draw [->,>=stealth'] (0.92,-0.04) -- (0.08,-0.04);
\draw [->,>=stealth'] (1.08,0.04) -- (1.92,0.04);
\draw [->,>=stealth'] (1.92,-0.04) -- (1.08,-0.04);
\end{scope}
\begin{scope}[yshift=-2.5cm]
\node [unstable] at (0,0) {};
\node [stable] at (1,0) {};
\node [stable] at (2,0) {};
\draw [->,>=stealth'] (0.92,0) -- (0.08,0);
\draw [->,>=stealth'] (1.08,0.04) -- (1.92,0.04);
\draw [->,>=stealth'] (1.92,-0.04) -- (1.08,-0.04);
\end{scope}
\begin{scope}[yshift=-2.5cm, xshift=3.5cm]
\node [unstable] at (0,0) {};
\node [stable] at (1,0) {};
\node [stable] at (2,0) {};
\draw [->,>=stealth'] (0.92,0) -- (0.08,0);
\draw [->,>=stealth'] (1.08,0.04) -- (1.92,0.04);
\draw [->,>=stealth'] (1.92,-0.04) -- (1.08,-0.04);
\end{scope}
\begin{scope}[yshift=-2.5cm, xshift=7cm]
\node [stable] at (1,0) {};
\node [stable] at (2,0) {};
\draw [->,>=stealth'] (1.08,0.04) -- (1.92,0.04);
\draw [->,>=stealth'] (1.92,-0.04) -- (1.08,-0.04);
\end{scope}
\begin{scope}[yshift=-2.5cm, xshift=10.5cm]
\node [stable] at (0,0) {};
\node [stable] at (1,0) {};
\node [stable] at (2,0) {};
\draw [->,>=stealth'] (0.08,0.04) -- (0.92,0.04);
\draw [->,>=stealth'] (0.92,-0.04) -- (0.08,-0.04);
\draw [->,>=stealth'] (1.08,0.04) -- (1.92,0.04);
\draw [->,>=stealth'] (1.92,-0.04) -- (1.08,-0.04);
\end{scope}
\begin{scope}
\footnotesize\sf
\node at (1.2,0.4) {\bf (I) w-Projected Networks:};
\node at (1.1,-2) {\bf (II) Proper Subnetworks:};
\scriptsize\sf
\node at (0,-0.9) {(0,1)};
\node at (0,-1.3) {(0,0)};
\node at (1,-0.9) {(1,1)};
\node at (1,-1.3) {(1,0)};
\node at (2,-0.9) {(2,0)};
\node at (3.5,-0.9) {(1,1)};
\node at (3.5,-1.3) {(2,0)};
\node at (4.5,-0.9) {(0,1)};
\node at (4.5,-1.3) {(1,0)};
\node at (5.5,-0.9) {(0,0)};
\node at (7,-0.9) {(0,1)};
\node at (7,-1.3) {(0,0)};
\node at (8,-0.9) {(1,1)};
\node at (8,-1.3) {(1,0)};
\node at (9,-0.9) {(2,0)};
\node at (10.5,-0.9) {(0,1)};
\node at (10.5,-1.3) {(0,0)};
\node at (11.5,-0.9) {(1,1)};
\node at (11.5,-1.3) {(1,0)};
\node at (12.5,-0.9) {(2,0)};
\end{scope}
\end{scope}
\end{tikzpicture}
\vspace{0.1cm}
\caption{\small In (I) we show the $\mathbf{w}$-projected networks for the four networks (A), (B), (C) and (D) and respective vectors $\mathbf{w} \in \mathbb{R}^2$ given in Figure \ref{fig1}. From left to right, the orders of the source complexes in the $\mathbf{w}$-projected networks correspond to the orders in which $\mathbf{w}$ sweeps through the these complexes in the original networks. In (II) we have removed the self-loops and complexes only associated with self-loops from (I) to get the corresponding proper subnetwork.}\label{fig:1dEndo}
\end{figure}
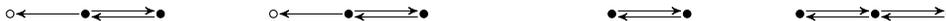
\end{center}

\subsection{Known results}
\label{knownresultssection}

While the conjectures given in Section \ref{introduction} (Conjecture \ref{gac}, \ref{wrc}, and \ref{conj}) remain open, several special cases are known where they hold. 

\begin{theorem}[Theorem 5.1, \cite{Pa}]
\label{theorem1}
Every $\kappa$-variable mass action system with bounded trajectories, a two-dimensional stoichiometric subspace, and lower-endotactic stoichiometric subnetworks is persistent.
\end{theorem}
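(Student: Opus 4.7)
The plan is to argue by contradiction. Suppose a trajectory $\mathbf{x}(t)$ of (\ref{de}) with $\mathbf{x}(0) \in \mathbb{R}_{>0}^m$ is not persistent. Since $\mathbf{x}(t)$ is bounded, the $\omega$-limit set $\omega(\mathbf{x}_0)$ is a nonempty compact subset of the stoichiometric compatibility class $P = (\mathbf{x}_0 + S) \cap \mathbb{R}_{\geq 0}^m$, which is a 2-dimensional convex polytope by the hypothesis $\dim S = 2$; by assumption $\omega(\mathbf{x}_0)$ meets the relative boundary $\partial P$. First I would choose a face $F$ of $P$ of smallest dimension ($0$ or $1$) intersecting $\omega(\mathbf{x}_0)$, let $Z \subseteq \{1,\ldots,m\}$ index the species that vanish on $F$, and pick a vector $\mathbf{w} \in \mathbb{R}_{\geq 0}^m$ parallel to $S$, orthogonal to $F$ within $S$, and with $w_i > 0$ iff $i \in Z$. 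Then $\mathbf{w} \cdot \mathbf{x}$ is strictly positive on the relative interior of $P$, vanishes exactly on $F$, and the hyperplanes $\{\mathbf{w} \cdot \mathbf{x} = c\}$ sweep through the source complexes exactly as in the parallel sweep test used to define the $\mathbf{w}$-projected network.

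Second, I would analyze the dominant terms of (\ref{de}) in the thin strip $U_\delta = \{\mathbf{x} \in P^\circ : \mathbf{w} \cdot \mathbf{x} < \delta\}$. Because $\mathbf{w}$ is nonnegative and supported on $Z$, whenever $\mathbf{x}$ approaches $F$ the rate $\mathbf{x}^{y_{\rho(k)}}$ is of order $o\bigl(\mathbf{x}^{y_{\rho(\ell)}}\bigr)$ whenever $\mathbf{w} \cdot y_{\rho(k)} > \mathbf{w} \cdot y_{\rho(\ell)}$. Since $\mathbf{x}(t)$ accumulates on $F$ from the interior, at least one reaction $R_i$ whose source complex minimizes $\mathbf{w} \cdot y_{\rho(i)}$ among all source complexes must satisfy $\mathbf{w} \cdot (y_{\rho'(i)} - y_{\rho(i)}) < 0$; otherwise the leading-order contribution to $\frac{d}{dt}(\mathbf{w} \cdot \mathbf{x})$ near $F$ would be nonnegative and no trajectory could accumulate on $F$. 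Applying the lower-endotactic hypothesis to this $\mathbf{w} \in \mathbb{R}_{\geq 0}^m$ produces a reaction $R_j$ with $\mathbf{w} \cdot y_{\rho(j)} < \mathbf{w} \cdot y_{\rho(i)}$ and $\mathbf{w} \cdot (y_{\rho'(j)} - y_{\rho(j)}) > 0$. Its rate factor therefore asymptotically dominates that of $R_i$ as $\mathbf{x}$ approaches $F$, and its reaction vector has a strictly positive $\mathbf{w}$-component.

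Third, I would leverage the 2-dimensionality of $P$ to upgrade this asymptotic dominance into a global exclusion. In 2D the strip $U_\delta$ can be decomposed into a finite collection of wedge-shaped regions on each of which a single monomial $\mathbf{x}^{y_{\rho(k)}}$ dominates all others, and the wedge nearest $F$ is controlled by the reaction $R_j$ supplied by the lower-endotactic hypothesis. Using the uniform lower bound $\kappa_k(t) \geq \eta > 0$ to prevent the time variation of the rate constants from destroying this dominance, one obtains an estimate of the form $\frac{d}{dt}(\mathbf{w} \cdot \mathbf{x}(t)) \geq c > 0$ on a neighborhood of $F$ within $U_\delta$, which yields a trapping region keeping $\mathbf{x}(t)$ a uniform distance from $F$ for all large $t$ and contradicts $F \cap \omega(\mathbf{x}_0) \neq \emptyset$.

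The main obstacle is the wedge decomposition and the verification that the ordering of monomials near $F$ is coherent enough that a single reaction dominates in the innermost wedge. This is precisely where the 2-dimensionality is indispensable: along a 1-dimensional face of the compatibility class the asymptotic ordering of the rates $\mathbf{x}^{y_{\rho(k)}}$ stabilizes into a hierarchy governed by the single scalar $\mathbf{w} \cdot y_{\rho(k)}$, so the lower-endotactic condition for this particular $\mathbf{w}$ directly yields a dominant inward reaction, whereas in higher dimensions no single projection suffices to rank monomials approaching a higher-codimension face. Reconciling this wedge analysis with the possibly oscillatory coefficients $\kappa_k(t)$ is also delicate but requires only the fixed bounds $\eta \leq \kappa_k(t) \leq 1/\eta$.
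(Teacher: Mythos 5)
This theorem is not proved in the paper at all: it is quoted verbatim from reference \cite{Pa} (Theorem 5.1 there), so there is no in-paper argument to compare your proposal against. Judged on its own merits, your sketch captures the correct intuition --- near the boundary the dominant monomials come from source complexes that are extremal in the sweep direction, and lower endotacticity forces a dominant inward reaction --- and this is indeed the engine of the actual proof in \cite{Pa} and \cite{C-N-P}. But as written there is a genuine gap, and it sits exactly where you place the difficulty: the case where the minimal face $F$ meeting $\omega(\mathbf{x}_0)$ is a vertex of the two-dimensional compatibility polytope. A vertex is a codimension-two face, and by your own closing remark no single projection $\mathbf{w}$ ranks the monomials $\mathbf{x}^{y_{\rho(k)}}$ along all directions of approach to it; the ratio $\mathbf{x}^{y_{\rho(j)}}/\mathbf{x}^{y_{\rho(i)}}$ near a vertex is not controlled by any one linear functional of the exponents. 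The published proofs handle this by constructing a single forward-invariant polygonal region whose boundary is pieced together from level curves of finitely many monomials $\mathbf{x}^{h}$, one for each relevant sweep direction, and verifying inward-pointing of the vector field on every edge and at every corner simultaneously; your ``finite collection of wedges, each dominated by one monomial'' is a stand-in for this construction but is precisely the part that needs to be built, not asserted.

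Two smaller points. First, your choice of $\mathbf{w}$ is over-constrained: a vector that is parallel to $S$, orthogonal to $F$ within $S$, \emph{and} has $w_i>0$ exactly for the vanishing species generally does not exist, and the vector that actually governs the order of vanishing of $\mathbf{x}^{y_{\rho(k)}}$ is determined by the direction of approach to $F$ (the rates at which the coordinates in $Z$ vanish), not by orthogonality inside $S$. Second, the inference ``some minimal-level reaction must have $\mathbf{w}\cdot(y_{\rho'(i)}-y_{\rho(i)})<0$, else the trajectory could not accumulate on $F$'' is not valid: a nonnegative leading-order drift of $\mathbf{w}\cdot\mathbf{x}$ does not preclude accumulation (the leading terms may all vanish because the minimal-level reactions project to self-loops), and in fact the lower-endotactic condition applied to a minimizing source immediately contradicts the existence of such an $R_i$ at the minimal level. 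The correct bookkeeping is the one encoded in Section 3.1 of this paper: pass to the proper subnetwork of the $\mathbf{w}$-projected network and use that its extreme source carries an inward reaction; the quantitative step is then to show this reaction's rate dominates, uniformly on a neighborhood of $F$, the sum of all outward rates, which again requires the polygonal trapping construction rather than a pointwise asymptotic comparison.
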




\begin{theorem}[Theorem 1.1, \cite{Gopal2014}]
\label{theorem2}
Every strongly endotactic mass action system is permanent.
\end{theorem}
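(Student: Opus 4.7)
The plan is to construct a Lyapunov-type function $V$ on $\mathbb{R}_{>0}^m$ whose sublevel sets, intersected with any positive stoichiometric compatibility class $(\mathbf{a}+S)\cap\mathbb{R}_{>0}^m$, are compact and exhaust the interior, and to show that $\dot V \le -C < 0$ along (\ref{de}) outside a sufficiently large such sublevel set. Permanence then follows immediately: every initially positive trajectory is eventually trapped in a forward-invariant sublevel set that is bounded, bounded away from $\partial\mathbb{R}_{>0}^m$, and contained in $\mathbf{a}+S$. The function $V$ should be chosen so that its gradient $\mathbf{w}(\mathbf{x}) := \nabla V(\mathbf{x})$ satisfies $\|\mathbf{w}(\mathbf{x})\| \to \infty$ precisely as $\mathbf{x}$ escapes compacta of $\mathbb{R}_{>0}^m$ and so that the direction of $\mathbf{w}(\mathbf{x})$ encodes the direction of approach to the boundary or to infinity; a combination of an entropy term $\sum_i x_i \ln x_i$ with a logarithmic boundary barrier $-\sum_i \ln x_i$ is one workable choice.

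The key computation is
\begin{equation*}
\dot V(\mathbf{x}) \;=\; \sum_{k=1}^r \kappa_k\, \mathbf{x}^{y_{\rho(k)}}\; \mathbf{w}(\mathbf{x}) \cdot (y_{\rho'(k)} - y_{\rho(k)}),
\end{equation*}
which I would analyze by a tropical rate comparison reminiscent of the parallel sweep test of Section \ref{projectedsection}. For any sequence $\mathbf{x}^{(n)}$ escaping every compact subset of the compatibility class, pass to a subsequence on which the unit vectors $\mathbf{w}^{(n)}/\|\mathbf{w}^{(n)}\|$ converge to a limit $\bar{\mathbf{w}}$. The rate monomials scale as $\mathbf{x}^{y_{\rho(k)}} \asymp \exp(\mathbf{w}\cdot y_{\rho(k)})$, so those whose source complex maximizes $\bar{\mathbf{w}}\cdot y$ exponentially dominate the others. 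Applying strong endotacticity to the direction $-\bar{\mathbf{w}}$ produces a reaction $R_j$ whose source complex $y_{\rho(j)}$ attains this maximum and whose reaction vector satisfies $\bar{\mathbf{w}}\cdot(y_{\rho'(j)} - y_{\rho(j)}) < 0$. Its contribution to $\dot V$ is negative of order $\|\mathbf{w}^{(n)}\|\cdot\mathbf{x}^{y_{\rho(j)}}$ and overwhelms every other term in the sum, since the other terms with comparably-sized monomials are sign-controlled by the ordinary endotactic condition and the remaining terms are exponentially smaller in $\|\mathbf{w}^{(n)}\|$.

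The hardest step will be promoting these pointwise-in-direction estimates to a single uniform bound $\dot V \le -C$ on the complement of one large sublevel set. This calls for a compactness argument on the sphere of limit directions $\bar{\mathbf{w}} \in S \cap \mathbb{S}^{m-1}$—the restriction to $S$ absorbs directions for which no reaction has a strictly positive projection, which cannot be the limiting direction of $\mathbf{w}(\mathbf{x})$ projected onto $S$ as $\mathbf{x}$ escapes—combined with careful bookkeeping of the constants $\kappa_k$ and the near-domination ordering of monomials in a neighborhood of each $\bar{\mathbf{w}}$. Strong endotacticity is essential precisely here: plain endotacticity guarantees the existence of a counterbalancing reaction but not that it emanates from a dominant source complex, which is what makes its monomial comparable in size to the positive contributions to $\dot V$. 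Without this dominant-source guarantee, the counterbalancing term could be exponentially smaller than what it must outweigh, and the argument collapses; this is exactly why Gopalkrishnan \emph{et al.} strengthened the endotactic hypothesis to its strong form in order to obtain this unconditional permanence result.
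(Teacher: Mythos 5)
First, a point of comparison that matters: the paper does not prove this statement. Theorem \ref{theorem2} is imported verbatim as Theorem 1.1 of Gopalkrishnan, Miller, and Shiu \cite{Gopal2014}, so there is no in-paper proof to measure your attempt against. Judged against the actual proof in \cite{Gopal2014}, your sketch has the right skeleton --- a free-energy-type Lyapunov function whose eventual decrease is forced by a tropical comparison of the monomials $\mathbf{x}^{y_{\rho(k)}}$, with strong endotacticity supplying a strictly negative contribution from a dominant source complex --- but the step you defer as ``the hardest step'' is essentially the entire technical content of their argument, and two of your intermediate claims are not correct as stated.

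Concretely: (i) the identification $\mathbf{x}^{y_{\rho(k)}} \asymp \exp(\mathbf{w}\cdot y_{\rho(k)})$ holds exactly only for $\mathbf{w}=\ln\mathbf{x}$, i.e.\ for $V(\mathbf{x})=\sum_i(x_i\ln x_i - x_i)$ with no barrier term. Once you add $-\sum_i \ln x_i$ to make the sublevel sets compact in $\mathbb{R}_{>0}^m$, the gradient near $\partial\mathbb{R}_{>0}^m$ behaves like $-1/x_i$ rather than $\ln x_i$, so the dominance ordering of the rate monomials can no longer be read off from $\mathbf{w}$ --- precisely in the regime relevant to persistence. If you instead drop the barrier, the sublevel sets touch the boundary and a uniform bound $\dot V\le -C$ is both insufficient for permanence and generally false, since all rate monomials may vanish as $\mathbf{x}$ approaches parts of the boundary; \cite{Gopal2014} must separately convert ``the free energy eventually decreases outside a compact subset of the open orthant'' into actual trapping. (ii) A single limiting direction $\bar{\mathbf{w}}$ extracted by compactness on the sphere is not enough: along a sequence escaping every compact subset of the open orthant one typically has $\ln\mathbf{x}^{(n)}=a_n\mathbf{u}_1+b_n\mathbf{u}_2+\cdots$ with $a_n\gg b_n\gg\cdots$, ties among the sources maximizing $\mathbf{u}_1\cdot y$ are broken only at lower levels of this hierarchy, and the counterbalancing reaction furnished by strong endotacticity must be located within the tied set at each level. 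Making this uniform is exactly what the ``jet frame'' machinery of \cite{Gopal2014} accomplishes, and it is absent from your plan. Your diagnosis of \emph{why} strong rather than plain endotacticity is needed --- the counterbalancing reaction must emanate from an extremal source so that its monomial is of leading order --- is correct and is indeed the heart of the matter; but as written the proposal is a plan for a proof, not a proof.
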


\noindent Notably, the first result is limited by dimension and requires that the system have bounded trajectories, which may not be immediately known. The second result is not limited by dimension or the boundedness of trajectories. It is worth noting, however, that many endotactic networks fail to be strongly endotactic (for example, the weakly reversible network (C) in Figure \ref{fig1}(I)).

\section{Main results}
\label{mainsection}

We now consider the question of determining whether a network is lower endotactic, endotactic, strongly endotactic, or none of the above. For the algorithms we will introduce in the remainder of this section, we use the following equivalent characterization of Definition \ref{endotactic}. This result is proved in the Supplemental Material.

\begin{lemma}
\label{endolemma}
A chemical reaction network is not endotactic if and only if there is a non-zero vector $\mathbf{w}= (w_1,w_2,\ldots, w_m) \in \mathbb{R}^m$ and a partition of the reaction set $\mathcal{R}$ into three disjoint sets $\mathcal{R}_0$, $\mathcal{R}_-$, and $\mathcal{R}_+$ such that:
\begin{enumerate}
\item[E1.]
$\mathbf{w} \cdot (y_{\rho'(i)} - y_{\rho(i)}) = 0$ for all $R_i \in \mathcal{R}_0$;
\item[E2.]
$\mathbf{w} \cdot (y_{\rho'(i)} - y_{\rho(i)}) < 0$ for all $R_i \in \mathcal{R}_-$;
\item[E3.]
$\mathbf{w} \cdot (y_{\rho(i)} - y_{\rho(j)}) \leq 0$ for all $R_i \in \mathcal{R}_-$ and $R_j \in \mathcal{R}_+$; and
\item[E4.]
$\mathcal{R}_- \not= \O$.
\end{enumerate}
A network is not lower endotactic if and only if there is a non-zero $\mathbf{w} \in \mathbb{R}^m$ and a partition of $\mathcal{R}$ such that E1, E2, E3, and E4 are satisfied as well as:
\begin{enumerate}
\item[LE.]
$\mbox{w}_k \geq 0$ for all $k = 1, \ldots, m$.
\end{enumerate}
A network is not strongly endotactic if and only if there is a non-zero $\mathbf{w} \in \mathbb{R}^m$ and a partition of $\mathcal{R}$ such that E1, E2, and E4 are satisfied as well as:
\begin{enumerate}
\item[SE3(a).]
$\mathcal{R}_0 \not= \O$ and $\mathbf{w} \cdot (y_{\rho(i)} - y_{\rho(j)}) < 0$ for all $R_i \in \mathcal{R}_0$ and $R_j \in \mathcal{R}_- \cup \mathcal{R}_+$; or
\item[SE3(b).]
$\mathcal{R}_0 = \O$ and $\mathbf{w} \cdot (y_{\rho(i)} - y_{\rho(j)}) \leq 0$ for all $R_i \in \mathcal{R}_-$ and $R_j \in \mathcal{R}_+$; and
\end{enumerate}
\end{lemma}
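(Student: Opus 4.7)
My plan is to prove each of the three characterizations in the lemma in turn, establishing both directions. The backward directions ($\Leftarrow$) in all three cases amount to direct verifications. For the endotactic case, I would take any $R_i\in\mathcal{R}_-$ (nonempty by E4) and confirm that every candidate witness $R_j$ fails at least one of the two strict inequalities of Definition~\ref{endotactic}(1): the reaction-vector positivity $\mathbf{w}\cdot(y_{\rho'(j)}-y_{\rho(j)})>0$ fails whenever $R_j\in\mathcal{R}_0\cup\mathcal{R}_-$, by E1 or E2; and the source-order strict inequality $\mathbf{w}\cdot(y_{\rho(j)}-y_{\rho(i)})<0$ fails whenever $R_j\in\mathcal{R}_+$, by E3. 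The lower-endotactic case is identical with $\mathbf{w}\in\mathbb{R}^m_{\ge 0}$. For strongly endotactic, under either SE3(a) or SE3(b) the minimum source value $m^*=\min_{R\in\mathcal{R}}\mathbf{w}\cdot y_{\rho(R)}$ is attained only by reactions that cannot serve as Definition~\ref{endotactic}(3) witnesses: under SE3(a) the strict separation forces all reactions at $m^*$-sources into $\mathcal{R}_0$, which have zero dot product; under SE3(b) the minimum source lies in $\mathcal{R}_-$, and choosing $R_i$ at such a source leaves no source strictly below it for a $R_j$.

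For the endotactic forward direction, given $\mathbf{w}$ and $R_{i_0}$ witnessing non-endotacticity, I would define the natural sign-based sets
\[\mathcal{R}_0^*=\{R:\mathbf{w}\cdot(y_{\rho'(R)}-y_{\rho(R)})=0\},\;\mathcal{R}_-^*=\{R:\mathbf{w}\cdot(y_{\rho'(R)}-y_{\rho(R)})<0\},\;\mathcal{R}_+^*=\{R:\mathbf{w}\cdot(y_{\rho'(R)}-y_{\rho(R)})>0\},\]
set $c=\min_{R\in\mathcal{R}_+^*}\mathbf{w}\cdot y_{\rho(R)}$ (or $+\infty$ if $\mathcal{R}_+^*=\emptyset$), and then declare $\mathcal{R}_0=\mathcal{R}_0^*$, $\mathcal{R}_-=\{R\in\mathcal{R}_-^*:\mathbf{w}\cdot y_{\rho(R)}\le c\}$, and $\mathcal{R}_+=\mathcal{R}\setminus(\mathcal{R}_0\cup\mathcal{R}_-)$. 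The witness property for $R_{i_0}$ gives $c\ge\mathbf{w}\cdot y_{\rho(i_0)}$, so $R_{i_0}\in\mathcal{R}_-$ and E4 holds; E1 and E2 are immediate; and E3 follows because any $R_j\in\mathcal{R}_+$ lies either in $\mathcal{R}_+^*$ with source value $\ge c$ or in $\mathcal{R}_-^*\setminus\mathcal{R}_-$ with source value $>c$, so $\mathbf{w}\cdot y_{\rho(R_j)}\ge c\ge\mathbf{w}\cdot y_{\rho(R_i)}$ for all $R_i\in\mathcal{R}_-$. The lower-endotactic forward direction is the same construction applied to the given $\mathbf{w}\in\mathbb{R}^m_{\ge 0}$.

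For the strongly-endotactic forward direction I would run a case analysis on $m^*$. If some reaction at an $m^*$-source has strictly negative dot product (this always holds when $\mathbf{w}\cdot y_{\rho(i_0)}=m^*$, since $R_{i_0}$ itself qualifies, and may also hold when $\mathbf{w}\cdot y_{\rho(i_0)}>m^*$), I set $\mathcal{R}_-$ to be those negative-dot-product reactions at $m^*$-sources, $\mathcal{R}_0=\emptyset$, and place everything else in $\mathcal{R}_+$; since $m^*$ is the minimum, every $\mathcal{R}_+$ source value is $\ge m^*$, giving SE3(b). Otherwise every reaction at an $m^*$-source has zero dot product, and the non-strongly-endotactic witness (with $\mathbf{w}\cdot y_{\rho(i_0)}>m^*$) rules out any $\mathcal{R}_+^*$ reaction at $m^*$; I then take $\mathcal{R}_0$ to be the nonempty set of $\mathcal{R}_0^*$ reactions at $m^*$-sources, $\mathcal{R}_-=\mathcal{R}_-^*$ (which contains $R_{i_0}$ and sits at sources of value $>m^*$), and $\mathcal{R}_+$ to be the rest; the strict separation in SE3(a) holds because every reaction outside $\mathcal{R}_0$ has source value strictly greater than $m^*$.

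The main obstacle I expect is this strongly-endotactic forward direction: correctly recognizing, from the structure of reactions at the minimum source, whether to aim for SE3(b) or SE3(a), and verifying the strict versus non-strict separation in each case while preserving E4. All remaining steps --- both backward directions and the endotactic and lower-endotactic forward constructions --- reduce to essentially bookkeeping translations between Definition~\ref{endotactic} and the partition conditions, and the only real subtlety is the threshold choice $c$ used to trim $\mathcal{R}_-$ so that E3 can be forced without losing the witness $R_{i_0}$.
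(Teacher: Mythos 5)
Your proposal is correct and takes essentially the same approach as the paper's proof: both directions are handled by direct verification against Definition~\ref{endotactic}, and the forward implications build the partition by thresholding the source values $\mathbf{w}\cdot y_{\rho(\cdot)}$, with the same case split (all reactions at the minimal source having zero reaction-vector component versus not) deciding between SE3(a) and SE3(b). The only differences are in bookkeeping: you trim $\mathcal{R}_-^*$ at the minimal source value of $\mathcal{R}_+^*$ and check E3 directly, whereas the paper takes $\mathcal{R}_-$ to be the minimal-source negative reactions and verifies E3 by contradiction.
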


Notice that Lemma \ref{endolemma} gives necessary and sufficient conditions for a network to {\it not} be endotactic. This is advantageous for the computational procedures we will outline in this section since checking endotacticity directly requires checking the technical conditions of Definition \ref{endotactic} of Proposition \ref{prop:lineNetw} {\it for all} vectors $\mathbf{w} \in \mathbb{R}^m$. Lemma \ref{endolemma}, however, requires showing that {\it there exists} a vector $\mathbf{w} \in \mathbb{R}^m$ satisfying the given conditions. Linear programming problems are well-suited to answer questions of existence.

\subsection{Mixed-integer linear programming framework}
\label{milpsection}

A {\it mixed-integer linear programming} (MILP) problem with $k$ decision variables (which we will denote in vector form $\mathbf{y} \in \mathbb{R}^k$) and $p$ constraints may be written in the form
\begin{equation}
\label{milp}
\begin{split}
\mbox{minimize} \; \; &\mathbf{c} \cdot \mathbf{y} \\
\mbox{subject to} \; \; & A_1 \mathbf{y} = \mathbf{b}_1 \\ 
& A_2 \mathbf{y} \leq \mathbf{b}_2 \\
& y_j \mbox{ is an integer} \\
& \mbox{for } j \in I, I \subseteq \{ 1, \ldots, k\}
\end{split}
\end{equation}
where $\mathbf{c} \in \mathbb{R}^k$, $A_1 \in \mathbb{R}^{p_1 \times k}$, $A_2 \in \mathbb{R}^{p_2 \times k}$, and $p_1 + p_2 = p$ \cite{Sz2}. If all of the variables in the problem are real then (\ref{milp}) can be solved in polynomial time. If any of the variables are integer-valued, however, it becomes NP-hard. The development of algorithms for efficiently solving MILP problems is a major area of current work which we do not summarize here. We utilize the non-commercial software packages GNU Linear Program Kit (GLPK) \cite{Makhorin2010} and SCIP \cite{Achterberg2009}. Several problems within the scope of network identification have already been placed in the MILP framework (Szederk\'{e}nyi {\it et al.}, \cite{Sz2,Sz-H-T,J-S4}).

We now recast the conditions of Lemma \ref{endolemma} for determining whether or not a network is endotactic into a MILP framework. The following quantities must be initialized prior to the start of the procedure:
\begin{equation}
\tag{\textbf{Par}}
\label{parameters}
\left\{
\begin{split}
& \mbox{A {\it stoichiometric matrix} }\Gamma \in \mathbb{Z}^{m \times r}\mbox{ where }[\Gamma]_{\cdot,k} = y_{\rho'(k)} - y_{\rho(k)}.\hfill \\
& \mbox{A {\it source complex matrix} }Y \in \mathbb{Z}^{m \times r}\mbox{ where }[Y]_{\cdot,k} = y_{\rho(k)}.\\
& \mbox{A small parameter }\epsilon > 0.
\end{split} \right.
\end{equation}
If multiple reactions proceed from the same source complex, then that source complex will appear multiple times in $Y$ as distinct columns. This differs from the convention often adopted in CRNT for enumerating complexes in a {\it complex matrix} $Y$ where repeated complexes are removed. This difference will not have any consequence on the processes we will introduce.

\subsection{Endotactic and lower endotactic networks}
\label{milpendo}

We initialize $\Gamma \in \mathbb{Z}^{m \times r}$, $Y \in \mathbb{Z}_{\geq 0}^{m \times r}$, and $\epsilon > 0$ as in (\ref{parameters}). We introduce the following decision variables:
\begin{flalign}
\tag{\textbf{Var}}
\label{dec1}
&
\left\{ \begin{array}{l}
\displaystyle{\mbox{w}[i] \in \left[ -\frac{1}{\epsilon}, \frac{1}{\epsilon} \right], i=1, \ldots, m} \\
\displaystyle{\mbox{R}_0[j] \in \{ 0, 1\}, j=1,\ldots, r}\\
\displaystyle{\mbox{R}_-[j] \in \{ 0, 1\}, j=1,\ldots, r}
\end{array}
\right.
&
\end{flalign}
and wish to satisfy the following logical equivalences, where $\mathbf{w}$, $\mathcal{R}_0$, $\mathcal{R}_-$, and $\mathcal{R}_+$ are as in Lemma \ref{endolemma}:
\begin{flalign}
\label{requirements1}
&
\left\{
\begin{array}{l}
\mathbf{w}[i] = w_i \mbox{ for } \mathbf{w} = (w_1, \ldots, w_m)\\
\mbox{R}_0[j] = 1 \; \Longleftrightarrow \; R_j \in \mathcal{R}_0\\
\mbox{R}_-[j] = 1 \; \Longleftrightarrow \; R_j \in \mathcal{R}_-.
\end{array}
\right.
&
\end{flalign}
Notice that, since $\mathcal{R}_0$, $\mathcal{R}_-$, and $\mathcal{R}_+$ form a complete partition of $\mathcal{R}$, we may determine the set $\mathcal{R}_+$ by noting that $R_j \in \mathcal{R}_+$ if and only if $\mbox{R}_0[j] = 0$ and $\mbox{R}_-[j] =0$.

We can accommodate the conditions E1-E3 of Lemma \ref{endolemma} and the logical implications of (\ref{requirements1}) with the following constraint sets, where we take $i,j=1,\ldots,r$, $i \not= j$:
\begin{flalign}
\tag{\textbf{E1}}
\label{E1}
&
\left\{ \begin{array}{l} 
\displaystyle{\sum_{k=1}^m \mbox{w}[k] \cdot \Gamma_{k,i} \leq \frac{1}{\epsilon} \cdot (1 - \mbox{R}_0[i])}\\
\displaystyle{-\sum_{k=1}^m \mbox{w}[k] \cdot \Gamma_{k,i} \leq \frac{1}{\epsilon} \cdot (1 - \mbox{R}_0[i])}\\
 \end{array} \right.
&
\end{flalign}
\begin{flalign}
\tag{\textbf{E2}}
\label{E2}
&
\left\{ \begin{array}{l}
\displaystyle{\mbox{R}_-[i] \leq 1 - \mbox{R}_0[i]}\\
\displaystyle{\sum_{k=1}^m \mbox{w}[k] \cdot \Gamma_{k,i} \leq \frac{1}{\epsilon} \cdot(1-\mbox{R}_-[i]) - \epsilon}
\end{array} \right.
&
\end{flalign}
\begin{flalign}
\tag{\textbf{E3}}
\label{E3}
&
\left\{ \begin{array}{l}
\displaystyle{\sum_{k=1}^m \mbox{w}[k]\cdot(Y_{k,i}-Y_{k,j})\leq \frac{1}{\epsilon} \cdot(1-\mbox{R}_-[i] + \mbox{R}_-[j] +\mbox{R}_0[j])}\\
\end{array} \right.
&
\end{flalign}

We make a few notes about the constraint sets above. {\it (E1):} By the construction of $\Gamma$ and the binarity of $\mbox{R}_0$, this condition admits only two possibilities. If $\mbox{R}_0[i] = 1$ then $\mathbf{w} \cdot (y_{\rho'(i)} - y_{\rho(i)}) = 0$, and if $\mbox{R}_0[i] = 0$ then $-\frac{1}{\epsilon} \leq \mathbf{w} \cdot (y_{\rho'(i)} - y_{\rho(i)}) \leq \frac{1}{\epsilon}$, which is no constraint for small $\epsilon >0$. {\it (E2):} The first constraint guarantees that $\mbox{R}_0[i]$ and $\mbox{R}_-[i]$ may not simultaneously be one, so that $R_i \in \mathcal{R}_0$ and $R_i \in \mathcal{R}_-$ is not possible. The second contraint permits two possibilities. If $\mbox{R}_-[i] = 1$ then $\mathbf{w} \cdot (y_{\rho'(i)}-y_{\rho(i)}) \leq -\epsilon < 0$, and if $\mbox{R}_0[i] = 0$ then $\mathbf{w} \cdot (y_{\rho'(i)}-y_{\rho(i)}) \leq \frac{1}{\epsilon} - \epsilon$, which provides no restriction for small $\epsilon > 0$. {\it (E3):} Notice that $\mathbf{w} \cdot (y_i - y_j) \leq 0$ is guaranteed only if $\mbox{R}_-[i] = 1$, $\mbox{R}_-[j] = 0$, and $\mbox{R}_0[j] = 0$, which is equivalent to the case $R_i \in \mathcal{R}_-$ and $R_j \in \mathcal{R}_+$. Otherwise, the constraint becomes $\mathbf{w} \cdot (y_i - y_j) \leq \frac{n}{\epsilon}$, $n=1,2$, which is no restriction for small $\epsilon > 0$.

Notice that (\ref{E1}), (\ref{E2}), and (\ref{E3}) may be trivially satisfied by taking $w[i] = 0$ for $i=1,\ldots,m,$ and $\mbox{R}_-[j] = 0$ for $j=1, \ldots, r$. We want to construct the objective function in (\ref{milp}) to ensure that, if possible, a non-trivial $\mathbf{w}$ and non-empty set $\mathcal{R}_-$ is selected. We therefore introduce the following objective function:
\begin{equation}
\tag{\textbf{Endo}}
\label{endo}
\mbox{minimize} \; \; \; - \sum_{i=1}^r \mbox{R}_-[i].
\end{equation}
If the network is not endotactic, there is a $\mathbf{w} \not= \mathbf{0}$ which satisfies conditions E1 - E3 of Lemma \ref{endolemma}, so that $\mbox{R}_-[i] = 1$ for at least one $i=1,\ldots,r$. If the network is endotactic, however, no $\mathbf{w} \not= \mathbf{0}$ satisfying the conditions may be found so that the procedures returns $\mbox{R}_-[i] = 0,$ $i=1,\ldots,r$, and the trivial vector $\mathbf{w} = \mathbf{0}$. It follows that we may determine whether a chemical reaction network is endotactic or not by optimizing (\ref{endo}) over the constraint sets (\ref{E1}), (\ref{E2}), and (\ref{E3}). If the program returns the optimal value zero, then the network is endotactic; otherwise, the program returns a negative value, and the network is not endotactic.

If we wish to determine whether of not a network is simply lower endotactic, we can incorporate condition $LE$ from Lemma \ref{endolemma} with the following constraint set, where we take $k=1,\ldots, m$:
\begin{flalign}
\tag{\textbf{LE}}
\label{LE}
&
\Big\{ \begin{array}{l}
\displaystyle{-\mbox{w}[k] \leq 0.}\\
\end{array} \Bigg.
&
\end{flalign}
We may then determine whether the network is lower endotactic or not by optimizing (\ref{endo}) over the constraint sets (\ref{E1}), (\ref{E2}), (\ref{E3}), and (\ref{LE}). The interpretation remains the same as above.

\subsection{Strongly endotactic networks}
\label{milpsendo}

We now turn our attention to the conditions of Lemma \ref{endolemma} for determining whether or not a network is strongly endotactic. We initialize $\Gamma \in \mathbb{Z}^{m \times r}$, $Y \in \mathbb{Z}_{\geq 0}^{m \times r}$, and $\epsilon > 0$ as in (\ref{parameters}) and introduce the following decision variables:
\begin{flalign}
\tag{\textbf{SEVar}}
\label{dec2}
&
\left\{ \begin{array}{l}
\displaystyle{\mbox{w}[i] \in \left[ -\frac{1}{\epsilon}, \frac{1}{\epsilon} \right], i=1, \ldots, m} \\
\displaystyle{\mbox{R}_0[j] \in \{ 0, 1\}, j=1,\ldots, r} \\
\displaystyle{\mbox{R}_-[j] \in \{ 0, 1\}, j =1, \ldots, r} \\
\displaystyle{\Theta \in \{ 0, 1 \}}
\end{array}
\right.
&
\end{flalign}
We now wish to satisfy the following logical equivalences, where $\mathbf{w}$, $\mathcal{R}_-$, and $\mathcal{R}_+$ are as in Lemma \ref{endolemma}:
\begin{flalign}
\label{requirements2}
&
\left\{
\begin{array}{l}
\mathbf{w}[i] = w_i \mbox{ for } \mathbf{w} = (w_1, \ldots, w_m)\\
\mbox{R}_0[j] = 1 \; \Longleftrightarrow \; R_j \in \mathcal{R}_0\\
\mbox{R}_-[j] = 1 \; \Longleftrightarrow \; R_j \in \mathcal{R}_-\\
\Theta = 1 \; \Longleftrightarrow \; \mbox{SE}3(b) \mbox{ is required}
\end{array}
\right.
&
\end{flalign}
The variable $\Theta\in \{ 0, 1 \}$ determines whether we are considering condition SE$3(a)$ ($\Theta=0$) or SE$3(b)$ ($\Theta=1$) of Lemma \ref{endolemma}. Otherwise, the interpretation is the same as in Section \ref{milpendo}.

We can accommodate conditions SE3(a) and SE3(b) of Lemma \ref{endolemma} and the logical implications of (\ref{requirements2}) with the following constraint sets, where we take $i,j=1,\ldots,r$, $i \not= j$:
\begin{flalign}
\tag{\textbf{SE3a}}
\label{SE3a}
&
\left\{ \begin{array}{l}
\displaystyle{\sum_{k=1}^m \mbox{w}[k] \cdot (Y_{k,i}-Y_{k,j}) \leq \frac{1}{\epsilon} \cdot (1-\mbox{R}_-[i] + \mbox{R}_-[j]) - \epsilon}
\end{array} \right.
&
\end{flalign}
\begin{flalign}
\tag{\textbf{SE3b}}
\label{SE3b}
&
\left\{ \begin{array}{l}
\displaystyle{1 - \Theta \leq \frac{1}{\epsilon} \sum_{i=1}^r \mbox{R}_0[i]}\\
\displaystyle{\Theta - 1 \leq \frac{1}{\epsilon} \sum_{i=1}^r \mbox{R}_0[i]}\\
\displaystyle{\sum_{k=1}^m \mbox{w}[k] \cdot (Y_{k,i}-Y_{k,j}) \leq \frac{1}{\epsilon} \cdot ( 2 - \mbox{R}_-[i]+\mbox{R}_-[j]-\Theta).}
\end{array} \right.
&
\end{flalign}

We make a few notes about the constraint sets above. {\it (SE3a):} If $R_i \in \mathcal{R}_-$ and $R_j \in \mathcal{R}_0 \cup \mathcal{R}_-$ then we have $\mathbf{w} \cdot (y_{\rho(i)}-y_{\rho(j)}) \leq -\epsilon < 0$; otherwise, we have $\mathbf{w} \cdot (y_{\rho(i)}- y_{\rho(j)}) \leq \frac{n}{\epsilon} - \epsilon$, which is no restriction for small $\epsilon > 0$. {\it (SE3b):} If $\mathcal{R}_0 = \O$ then condition SE$3(a)$ may not be satisfied so that we must check SE$3(b)$. The first two constraints guarantee that $\Theta=1$. If $R_i \in \mathcal{R}_-$, $R_j \in \mathcal{R}_0 \cup \mathcal{R}_+$, and SE$3(a)$ cannot be satisfied, we have $\mathbf{w} \cdot (y_{\rho(i)}-y_{\rho(j)} \leq 0$; otherwise, we have $\mathbf{w} \cdot (y_{\rho(i)}-y_{\rho(j)} \leq \frac{n}{\epsilon}$, $n=1,2$, which is no restriction for small $\epsilon > 0$.

As in Section \ref{milpendo}, (\ref{E1}), (\ref{E2}), (\ref{SE3a}), and (\ref{SE3b}) may be trivially satisfied by taking $w[i] = 0$ for $i=1,\ldots,m,$ and $\mbox{R}_-[j] = 0$ for $j=1, \ldots, r$. To avoid this situation and satisfy condition E$4$, we reintroduce the objective function (\ref{endo}). We may determine whether a chemical reaction network is strongly endotactic or not by optimizing (\ref{endo}) over the constraint sets (\ref{E1}), (\ref{E2}), (\ref{SE3a}), and (\ref{SE3b}). If the program returns the value zero, then the network is strongly endotactic; otherwise, it is not strongly endotactic.

\section{Implementation and application}
\label{applicationsection}

We have implemented the MILP procedure outlined in Section \ref{milpsection} in the freely available web based chemical reaction network analysis tool {\sf CoNtRol}, which is available at \url{http://reaction-networks.net/control/} \cite{D-B-M-P}. In addition to determining whether the chemical reaction network is endotactic or strongly endotactic, the program implements several classical and recent results to test for multiple equilibria, periodic orbits, monotonicity, and convergence to stable equilibria.

We have run the {\sf CoNtRol} implementation on a large sample of networks from the European Bioinformatics Institute's BioModels Database \cite{BioModels}. The search produced a number of examples of networks which were either endotactic or strongly endotactic but which were not weakly reversible, and therefore would not have been previously classified. We note, however, that the majority of networks which were endotactic were also weakly reversible, and the majority of networks which were strongly endotactic were weakly reversible and consisted of a single linkage class. We suspect, therefore, that in application networks which are endotactic but outside of the scope of the theory of weakly reversible networks are rare.

In the remainder of this section, we consider examples drawn from the biochemical literature, including the database search. Firstly, we consider the general distributed and processive phosphorylation / dephosphorylation networks which are widely used to model signal transduction cascades (e.g. MAPK cascades). We show that these networks are not endotactic in their standard form but that they may be reduced through the standard quasi-steady state approximation (QSSA) to models which are strongly endotactic and have $\kappa$-variable mass action form. We may therefore conclude permanence. Secondly, we consider a common circadian clock mechanism \cite{L-G} which was a network of interest in our BioModels search. We show that this network is strongly endotactic and has $\kappa$-variable mass action form and is therefore permanent. It is notable that this network is not weakly reversible and therefore falls outside of the scope of networks which can be considered by existing methods. 

\subsection{Distributive and processive phosphorylation networks}




Protein substrates are commonly modified posttranslationally by the process of enzyme-mediated phosphorylation and dephosphorylation. In these processes, phosphate groups are attached to or removed from the protein substrate in order to change its shape and functions. It is common for the phosphorylation and dephosphorylation events to be mediated by distinct enzymes, for instance, as is the case for MAPK/ERK systems, and also for these proteins to permit multiple phosphorylation sites on the same substrate. (See the survey of Patwardhan and Miller for details \cite{Patwardhan}.)

We start by considering the following basic motif:
\[
\mbox{\textbf{(futile cycle)}} \; \; \left\{ \; \;\begin{array}{l} \displaystyle{S_0 + E \mathop{\stackrel{k_1}{\begin{array}{c} \vspace{-0.25cm} \longleftarrow \vspace{-0.3cm} \\ \longrightarrow \end{array}}}_{k_2} ES_0 \; \stackrel{k_3}{\longrightarrow} \; S_1 + E} \\[0.2in]
\displaystyle{S_1 + F \mathop{\stackrel{\ell_1}{\begin{array}{c} \vspace{-0.25cm} \longleftarrow \vspace{-0.3cm} \\ \longrightarrow \end{array}}}_{\ell_2} FS_1 \; \stackrel{\ell_3}{\longrightarrow} \; S_0 + F} \end{array} \right.
\]
where $S_0$ is an unphosphorylated substrate, $S_1$ is a phorphorylated substrate, $E$ and $F$ are distinct enzymes, and $ES_0$ and $FS_1$ are substrate-enzyme compounds. This model was originally studied by Goldbeter and Koshland \cite{Goldbeter1981} and is often referred to as the {\it futile cycle} due to the apparent conflicting objectives of the enzymes: $E$ seeks to phosphoryate $S$, while $F$ seeks to dephosphorylate it. Persistence of this mechanism has been shown previously in Angeli and Sontag \cite{A-S2} and Sontag {\it et al.} \cite{A3}. The endotacticity of this network has not previously been considered in the literature. We present the following.

\begin{proposition}
\label{futilecyclelemma}
The futile cycle network is not endotactic.
\end{proposition}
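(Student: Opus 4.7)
The plan is to exhibit a single vector $\mathbf{w}\in\mathbb{R}^6$ that witnesses failure of the endotactic condition through the reaction $R_3: ES_0 \to S_1 + E$. Labelling the species in the order $S_0, S_1, E, F, ES_0, FS_1$, the four distinct source complexes of the network are $S_0+E$, $ES_0$, $S_1+F$, and $FS_1$, with $ES_0$ and $FS_1$ each serving as the source of two reactions. I would look for $\mathbf{w}$ such that $ES_0$ projects to the strict minimum among these four source complexes, while simultaneously $\mathbf{w}\cdot v_3 < 0$, where $v_3 = (0,1,1,0,-1,0)$ is the reaction vector of $R_3$. Once this is arranged, failure of Definition \ref{endotactic} at $(\mathbf{w}, R_3)$ is automatic: no source complex lies strictly below $ES_0$ in the $\mathbf{w}$-projection, so no candidate $R_j$ with $\mathbf{w}\cdot(y_{\rho(j)} - y_{\rho(3)}) < 0$ exists at all, let alone one with a positively pointing reaction vector.

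Constructing such a $\mathbf{w} = (w_1, \ldots, w_6)$ reduces to four linear inequalities: $w_5 < w_1 + w_3$, $w_5 < w_2 + w_4$, $w_5 < w_6$ (so that $ES_0$ is the unique minimum source in the $\mathbf{w}$-sweep), together with $w_2 + w_3 < w_5$ (so that $\mathbf{w}\cdot v_3 = w_2 + w_3 - w_5 < 0$). These are jointly satisfiable; for instance, $\mathbf{w} = (2, 0, 0, 2, 1, 2)$ produces source projections $(2,1,2,2)$ and $\mathbf{w}\cdot v_3 = -1$. If a partition-based appeal to Lemma \ref{endolemma} is preferred over working directly with Definition \ref{endotactic}, the choice $\mathcal{R}_- = \{R_3\}$, $\mathcal{R}_0 = \{R_4, R_5\}$ (the two dissociation reactions, whose vectors project to zero), and $\mathcal{R}_+ = \{R_1, R_2, R_6\}$ makes E1, E2, E4 immediate, while E3 collapses to the observation that the $\mathcal{R}_-$ source $ES_0$ has projection $1$ and every $\mathcal{R}_+$ source has projection $2$.

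The only mildly subtle step is realizing that the weight assigned to the compound complex $ES_0$ can be pushed \emph{below} the weights of its dissociation fragments on both sides of the elementary step: the simultaneous requirements $w_5 < w_1 + w_3$ and $w_5 > w_2 + w_3$ force $w_2 < w_1$, and a symmetric comparison of $ES_0$ with $S_1 + F$ forces $w_3 < w_4$. Once the weight vector is allowed to break the apparent symmetries between $S_0, S_1$ and between $E, F$, the construction goes through without further obstacle. A mirror-image choice of $\mathbf{w}$ using $R_6: FS_1 \to S_0 + F$ in place of $R_3$ yields a second witness, making clear that the failure of endotacticity is intrinsic to the futile-cycle topology rather than an accident of a single reaction.
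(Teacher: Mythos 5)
Your proposal is correct and follows essentially the same route as the paper: both exhibit an explicit witness vector $\mathbf{w}$ under which the source complex $ES_0$ of $R_3$ is minimal among all source projections while $\mathbf{w}\cdot(y_{\rho'(3)}-y_{\rho(3)})<0$ (the paper's computed $\mathbf{w}$ makes all six source columns tie at the value $10$, whereas yours makes $ES_0$ strictly minimal; either way no candidate $R_j$ exists and Definition \ref{endotactic} fails). One harmless slip in your partition check: $R_2\in\mathcal{R}_+$ also has source $ES_0$, which projects to $1$ rather than $2$, but condition E3 only requires $\mathbf{w}\cdot(y_{\rho(3)}-y_{\rho(2)})\leq 0$, which holds with equality.
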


\begin{proof}
The {\sf CoNtRol} implementation of the algorithm in Section \ref{milpendo} establishes this result. For completeness, we present the results. We index the species according to $X_1 = S_0$, $X_2 = E$, $X_3 = ES_0$, $X_4 = S_1$, $X_5 = F$, and $X_6 = FS_1$, and the reactions in the order of the rates: $R_1 \leftrightarrow k_1$, $R_2 \leftrightarrow k_2$, $R_3 \leftrightarrow k_3$, $R_4 \leftrightarrow \ell_1$, $R_5 \leftrightarrow \ell_2$, and $R_6 \leftrightarrow \ell_3$. We initialize the following matrices according to (\ref{parameters}):
\[\Gamma = \left[ \begin{array}{cccccc} -1 & 1 & 0 & 0 & 0 & 1 \\ -1 & 1 & 1 & 0 & 0 & 0 \\ 1 & -1 & -1 & 0 & 0 & 0 \\ 0 & 0 & 1 & -1 & 1 & 0 \\ 0 & 0 & 0 & -1 & 1 &1 \\ 0 & 0 & 0 & 1 & -1 & -1 \end{array} \right] \hspace{1cm} \mbox{{\it and}} \hspace{1cm} Y = \left[ \begin{array}{cccccc} 1 & 0 & 0 & 0 & 0 & 0 \\ 1 & 0 & 0 & 0 & 0 &0 \\ 0 & 1 & 1 & 0 & 0 & 0 \\ 0 & 0 & 0 & 1 & 0 & 0 \\ 0 & 0 & 0 & 1 & 0 & 0 \\ 0 & 0 & 0 & 0 & 1 & 1 \end{array} \right]\]
and select $\epsilon = 0.1$. The algorithm returns the vector $\mathbf{w} = (9.9,0.1,10,0,10,10)$ and identifies the reaction $R_3 = (ES_0 \longrightarrow S_1 + E) \in \mathcal{R}_-$. It can be quickly checked that $\mathbf{w} \cdot Y_{\cdot, 1} =\mathbf{w} \cdot Y_{\cdot,2}  = \mathbf{w} \cdot Y_{\cdot,3} = \mathbf{w} \cdot Y_{\cdot,4} =\mathbf{w} \cdot Y_{\cdot,5} = \mathbf{w} \cdot Y_{\cdot,6} = 10$ and that $\mathbf{w} \cdot \Gamma_{\cdot, 3} = -9.9 < 0$. It follows that the $\mathbf{w}$-projected network consists of a single source complex and that $\mathcal{R}_3$ projects to a non-self-loop reaction. It follows that the network is not endotactic by Definition \ref{endotactic} and Proposition \ref{prop:lineNetw}.
\end{proof}


In many biochemical processes, substrates are phosphorylated multiple times rather than the single time permitted by the futile cycle. For such proteins, the phosphorylation and dephosphorylation processes may furthermore occur {\it processively} or {\it distributively}. The general $n$-site processive phosphorylation network is given by
\begin{equation}
\label{processive}
\left\{
\begin{array}{c}
\displaystyle{S_0 + E \mathop{\stackrel{k_1}{\begin{array}{c} \vspace{-0.25cm} \longleftarrow \vspace{-0.3cm} \\ \longrightarrow \end{array}}}_{k_2} ES_0 \mathop{\stackrel{k_3}{\begin{array}{c} \vspace{-0.25cm} \longleftarrow \vspace{-0.3cm} \\ \longrightarrow \end{array}}}_{k_4} ES_1 \mathop{\stackrel{k_5}{\begin{array}{c} \vspace{-0.25cm} \longleftarrow \vspace{-0.3cm} \\ \longrightarrow \end{array}}}_{k_6} \cdots \mathop{\stackrel{k_{2n-1}}{\begin{array}{c} \vspace{-0.25cm} \longleftarrow \vspace{-0.3cm} \\ \longrightarrow \end{array}}}_{k_{2n}} ES_{n-1} \; \stackrel{k_{2n+1}}{\longrightarrow} \; S_n + E} \vspace{0.1in} \\
\displaystyle{S_n + F \mathop{\stackrel{\ell_{2n+1}}{\begin{array}{c} \vspace{-0.25cm} \longleftarrow \vspace{-0.3cm} \\ \longrightarrow \end{array}}}_{\ell_{2n}} FS_n \mathop{\stackrel{\ell_{2n-1}}{\begin{array}{c} \vspace{-0.25cm} \longleftarrow \vspace{-0.3cm} \\ \longrightarrow \end{array}}}_{\ell_{2n-2}} \cdots \mathop{\stackrel{\ell_5}{\begin{array}{c} \vspace{-0.25cm} \longleftarrow \vspace{-0.3cm} \\ \longrightarrow \end{array}}}_{\ell_4} FS_2 \mathop{\stackrel{\ell_3}{\begin{array}{c} \vspace{-0.25cm} \longleftarrow \vspace{-0.3cm} \\ \longrightarrow \end{array}}}_{\ell_2} FS_1 \; \stackrel{\ell_1}{\longrightarrow} \; S_0 + F}
\end{array}
\right.
\end{equation}
while the $n$-site distributive phosphorylation network is given by
\begin{equation}
\label{distributive}
\left\{
\begin{split}
& \displaystyle{S_0 + E \mathop{\stackrel{k_1}{\begin{array}{c} \vspace{-0.25cm} \longleftarrow \vspace{-0.3cm} \\ \longrightarrow \end{array}}}_{k_2} S_0E \; \stackrel{k_3}{\longrightarrow} S_1 + E \mathop{\stackrel{k_4}{\begin{array}{c} \vspace{-0.25cm} \longleftarrow \vspace{-0.3cm} \\ \longrightarrow \end{array}}}_{k_5} S_1E \; \stackrel{k_6}{\longrightarrow} \; \; \; \cdots \; \; \; \stackrel{k_{3n-3}}{\longrightarrow} S_{n-1} + E \mathop{\stackrel{k_{3n-2}}{\begin{array}{c} \vspace{-0.25cm} \longleftarrow \vspace{-0.3cm} \\ \longrightarrow \end{array}}}_{k_{3n-1}} S_{n-1}E\stackrel{k_{3n}}{\longrightarrow} S_n + E} \\
&\displaystyle{S_n + F \mathop{\stackrel{\ell_{3n-2}}{\begin{array}{c} \vspace{-0.25cm} \longleftarrow \vspace{-0.3cm} \\ \longrightarrow \end{array}}}_{\ell_{3n-1}} S_nF \; \stackrel{\ell_{3n}}{\longrightarrow} S_{n-1} + F \mathop{\stackrel{\ell_{3n-5}}{\begin{array}{c} \vspace{-0.25cm} \longleftarrow \vspace{-0.3cm} \\ \longrightarrow \end{array}}}_{\ell_{3n-4}} S_{n-1}F \; \stackrel{\ell_{3n-3}}{\longrightarrow}\; \; \; \cdots \; \; \; \stackrel{\ell_6}{\longrightarrow} S_{1} + F \mathop{\stackrel{\ell_1}{\begin{array}{c} \vspace{-0.25cm} \longleftarrow \vspace{-0.3cm} \\ \longrightarrow \end{array}}}_{\ell_2} S_1F \; \stackrel{\ell_3}{\longrightarrow} S_0 + F}
\end{split}
\right.
\end{equation}
\noindent In (\ref{processive}) and (\ref{distributive}), $S_n$ denotes substrate which has been phosphorylated $n$ times. The distinction between (\ref{processive}) and (\ref{distributive}) lies in whether the facilitating enzyme detaches after every step or remains attached.

Dynamical properties of the distributive mechanism has been studied by numerous authors, including Gunawardena \cite{Gunawardena}, Wang and Sontag \cite{W-S}, Perez-Millan {\it et al.} \cite{M-D-S-C}, and Johnston \cite{J1}. The processive network has been studied by Thomson and Gunawardena \cite{T-G} and Conradi and Shiu \cite{C-S}. While persistence for the distributive mechanism (\ref{distributive}) is known from the results of Angeli {\it et al.} \cite{A3}, endotacticity has not previously be studied. We complete this study now.

\begin{proposition}
The distributive and processive phosphorylation / dephosphorylation networks given by (\ref{processive}) and (\ref{distributive}) are not endotactic.
\end{proposition}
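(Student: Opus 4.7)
The plan is to mimic the proof of Proposition \ref{futilecyclelemma} by exhibiting, for each of (\ref{processive}) and (\ref{distributive}), an explicit vector $\mathbf{w}\in\mathbb{R}^m$ together with a partition of the reaction set satisfying the conditions of Lemma \ref{endolemma}. A single construction will cover both cases: choose $\mathbf{w}$ so that every reversible reaction has zero projection, each irreversible E-side catalytic reaction has strictly positive projection, each irreversible F-side catalytic reaction has strictly negative projection, and the source complexes of the F-side catalytic reactions project to values no greater than those of the E-side catalytic reactions.

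Concretely, I fix a strictly increasing sequence $w_{S_0}<w_{S_1}<\cdots<w_{S_n}$ (with only $w_{S_0}$ and $w_{S_n}$ relevant in the processive case, since the intermediate $S_i$ are not species there) together with $w_E, w_F\in\mathbb{R}$ satisfying $w_E-w_F\geq w_{S_n}-w_{S_0}$. For the distributive network I set $w_{S_i E}=w_{S_i}+w_E$ and $w_{S_j F}=w_{S_j}+w_F$; for the processive network I set $w_{ES_i}=w_{S_0}+w_E$ and $w_{FS_j}=w_{S_n}+w_F$. A direct calculation then verifies that every reversible pair projects to $0$, that each irreversible E-catalytic reaction $S_i E\to S_{i+1}+E$ (respectively $ES_{n-1}\to S_n+E$ in the processive case) has projection $w_{S_{i+1}}-w_{S_i}>0$ (respectively $w_{S_n}-w_{S_0}>0$), and that each irreversible F-catalytic reaction $S_j F\to S_{j-1}+F$ (respectively $FS_1\to S_0+F$) has projection $w_{S_{j-1}}-w_{S_j}<0$ (respectively $w_{S_0}-w_{S_n}<0$). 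Placing the reversible reactions into $\mathcal{R}_0$, the E-catalytic ones into $\mathcal{R}_+$, and the F-catalytic ones into $\mathcal{R}_-$ makes conditions E1, E2, and E4 of Lemma \ref{endolemma} immediate, and a concrete realization is furnished by $w_{S_i}=i/n$, $w_E=1$, $w_F=0$.

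The crux of the proof is verifying condition E3. The source complexes of the $\mathcal{R}_-$-reactions all lie on the F-side with weights of the form $w_{S_j}+w_F$, and the source complexes of the $\mathcal{R}_+$-reactions all lie on the E-side with weights of the form $w_{S_i}+w_E$ (with the obvious specialization in the processive case). Hence the maximum $\mathcal{R}_-$-source weight equals $w_{S_n}+w_F$ and the minimum $\mathcal{R}_+$-source weight equals $w_{S_0}+w_E$, so the hypothesis $w_E-w_F\geq w_{S_n}-w_{S_0}$ is exactly what is required to make $w_{S_n}+w_F\leq w_{S_0}+w_E$, which yields E3, and Lemma \ref{endolemma} then shows that neither network is endotactic. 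The main obstacle in discovering this argument is that the naive adaptation of the futile-cycle construction---collapsing all source complexes to a single projected value---forces $w_{S_0}=\cdots=w_{S_n}$ and $w_E=w_F$ once $n\geq 2$, which in turn collapses every reaction vector to zero projection and produces no certificate; the resolution is to spread the source complexes along the projection line and exploit the gap between the E-side and F-side clusters induced by $w_E>w_F$ to separate the $\mathcal{R}_-$- from the $\mathcal{R}_+$-source weights rather than to collapse them.
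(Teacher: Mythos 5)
Your proof is correct. For the processive network your concrete vector ($w_{ES_i}=w_{S_0}+w_E$, $w_{FS_j}=w_{S_n}+w_F$, with $w_{S_0}=0$, $w_{S_n}=w_E=1$, $w_F=0$) reproduces exactly the vector (\ref{w2}) used in the paper, so that case is essentially identical; the only cosmetic difference is that you verify the conditions of Lemma \ref{endolemma} directly, whereas the paper passes through Proposition \ref{prop:lineNetw} and observes that all source complexes project to a single point while $FS_1\to S_0+F$ does not project to a self-loop. For the distributive network the two arguments genuinely diverge. The paper takes $w_{S_i}=i$, $w_E=1$, $w_F=0$, so the F-side and E-side source clusters interleave and the global separation condition E3 (with $\mathcal{R}_-$ equal to all F-catalytic reactions) fails for $n\geq 2$; instead the paper argues \emph{locally}, noting that the minimal projected source value is attained by $S_1F$ and that $S_1F\to S_0+F$ points leftward, so condition $(-)$ for single-species networks is violated vacuously. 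Your normalization $w_{S_i}=i/n$ compresses the substrate weights so that the entire F-side cluster sits at or to the left of the entire E-side cluster, which lets you place \emph{every} F-catalytic reaction in $\mathcal{R}_-$ and check E3 globally; this buys a single uniform certificate covering both networks and all $n$ at once. Your closing observation --- that collapsing all source complexes to one projected value, which works for the futile cycle and the processive chain, forces $w_{S_0}=\cdots=w_{S_n}$ and $w_E=w_F$ in the distributive case once $n\geq 2$ and hence yields no certificate --- is accurate and correctly identifies why the distributive case requires spreading the sources rather than collapsing them.
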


\begin{proof} We construct a vector $\mathbf{w}$ similar to the one found computationally in Proposition \ref{futilecyclelemma}.\\

\noindent {\it Processive:} We index the species and reactions of (\ref{processive}) according to: $X_i = ES_{i-1}$, $i=1, \ldots, n$; $X_{n+i} = FS_{i}$, $i=1,\ldots, n$; $X_{2n+1} = S_0$; $S_{2n+2} = S_n$; $S_{2n+3} = E$; $S_{2n+4} = F$; $R_i \leftrightarrow k_i$, $i=1, \ldots, 2n+1$; and $R_{2n+1+i} \leftrightarrow \ell_i$, $i=1,\ldots, 2n+1$. We set-up the matrices $\Gamma$ and $Y$ as in (\ref{parameters}). Consider the following vector:
\begin{equation}
\label{w2}
\begin{array}{crccccccccccl}
& & ES_0 &\cdots & ES_{n-1} & FS_1 & \cdots & FS_n & S_0 & S_n & E & F & \\
\mathbf{w}= & \Big( & 1, & \cdots & 1, & 1, & \cdots & 1, & 0, & 1, & 1, & 0 & \Big).
\end{array}
\end{equation}
We have that $\mathbf{w} \cdot Y_{\cdot,i} =  1$, for all $i=1,\ldots, 4n+2$, so that the $\mathbf{w}$-projected network consists of a single source complex. For the reaction $R_{2n+2} = (FS_1 \longrightarrow F + S_0)$, however, we have $\mathbf{w} \cdot \Gamma_{\cdot,2n+2} = -1 < 0$ so that $R_{2n+2}$ does not project to a self-loop reaction. It follows that (\ref{processive}) is not endotactic by Definition \ref{endotactic} and Proposition \ref{prop:lineNetw}.\\

\noindent {\it Distributive:} We index the species and reactions of (\ref{distributive}) according to: $X_i = ES_{i-1}$, $i=1, \ldots, n$; $X_{n + i} = FS_{i}$, $i=1,\ldots, n$; $X_{2n+i} = S_{i-1}$, $i=1, \ldots, n+1$; $X_{3n+1} = E$; $X_{3n+2} = F$; $R_i \leftrightarrow k_i$, $i=1,\ldots, 3n$; and $R_{3n + i} \leftrightarrow \ell_{i}$, $i=1, \ldots, 3n$. We set-up the matrices $\Gamma$ and $Y$ as in (\ref{parameters}). Consider the following vector:
\begin{equation}
\label{w1}
\begin{array}{crccccccccccccl}
& & ES_0 &\cdots & ES_{n-1} & FS_1 & \cdots & FS_n & S_0 & S_1 &\cdots & S_n & E & F & \\
\mathbf{w}= & \Big( & 1, & \cdots & n, & 1, & \cdots & n, & 0, & 1, & \cdots & n, & 1, & 0 & \Big).
\end{array}
\end{equation}
The first source complexes encountered when sweeping with the $\mathbf{w}$ given by (\ref{w1}) are $S_0 + E$, $S_1 + F$, $ES_0$, and $FS_1$, which return a minimal value of $\mathbf{w} \cdot Y_{\cdot,1} = \mathbf{w} \cdot Y_{\cdot,2} = \mathbf{w} \cdot Y_{\cdot,3} = \mathbf{w} \cdot Y_{\cdot,3n+1} = \mathbf{w} \cdot Y_{\cdot,3n+2} = \mathbf{w} \cdot Y_{\cdot,3n+3} = 1$. The reaction $R_{3n+3} = (FS_1 \longrightarrow F + S_0)$, however, returns the value $\mathbf{w} \cdot \Gamma_{\cdot,3n+3} = -1$. It follows that the left-most source complex in the $\mathbf{w}$-projected network has a reaction which directions to the left, so that (\ref{distributive}) is not endotactic by Definition \ref{endotactic} and Proposition \ref{prop:lineNetw}.
\end{proof}

Although it is known that endotacticity is a sufficient but not necessary condition for persistence of mass action systems (see the Lotka-Volterra example in Section \ref{persistencesection}), it is still somewhat surprising that the futile cycle and general $n$-site processive and distributive networks (\ref{processive}) and (\ref{distributive}), respectively, fail to be endotactic. We note therefore that intuition is often a poor guide for endotacticity, especially for higher-dimensional systems.

To recover meaningful results regarding the networks (\ref{processive}) and (\ref{distributive}), we note that such systems are commonly modeled with Michaelis-Menten kinetics through the standard quasi-steady state approximation (QSSA) \cite{M-M}. In this framework, the concentrations of the intermediate compounds (e.g. $ES_i$ and $FS_i$) are assumed to equilibriate quickly so that their derivatives may be set to zero. The conservation laws
\begin{equation}\label{eq:totals}
E_T = [E] + \sum_{i=0}^{n-1} [ES_i], \; \; \; \mbox{ and } \; \; \; F_T = [F] + \sum_{i=1}^n [FS_i]
\end{equation}
may then be used to produce algebraic expressions for $[ES_i]$ and $[FS_i]$ which can be substituted into the remaining equations (\ref{de}) to give modified formation rates for $S_i$, $i=1,\ldots, n$. The details of the analysis, including the proof of the following result, are contained in the Supplemental Material.

\begin{lemma}
\label{qssalemma}
The $n$-site processive phosphorylation / dephosphorylation network (\ref{processive}) may be modeled under the QSSA by the modified network
\begin{equation}
\label{qssaprocessive}
S_0 \mathop{\stackrel{\nu^+(S_0)}{\begin{array}{c} \vspace{-0.25cm} \longleftarrow \vspace{-0.3cm} \\ \longrightarrow \end{array}}}_{\nu^-(S_n)} S_n
\end{equation}
where
\begin{equation}
\label{kinetics1}
\nu^+(S_0) = \frac{V^+ [S_0]}{K^+ + [S_0]} \; \; \; \mbox{ and } \; \; \; \nu^-(S_n) = \frac{V^- [S_n]}{K^- + [S_n]}
\end{equation}
and $V^+, V^-, K^+, K^- > 0$ are constants which depend upon the original rate constants and total enzyme concentrations $E_T$ and $F_T$ (see \ref{eq:totals}). The $n$-site distributive phosphorylation / dephosphorylation network (\ref{distributive}) may be modeled under the QSSA by the modified network
\begin{equation}
\label{qssadistributive}
S_0 \mathop{\stackrel{\nu_1^+}{\begin{array}{c} \vspace{-0.25cm} \longleftarrow \vspace{-0.3cm} \\ \longrightarrow \end{array}}}_{\nu_1^-} S_1 \mathop{\stackrel{\nu_2^+}{\begin{array}{c} \vspace{-0.25cm} \longleftarrow \vspace{-0.3cm} \\ \longrightarrow \end{array}}}_{\nu_2^-} \cdots \mathop{\stackrel{\nu_n^+}{\begin{array}{c} \vspace{-0.25cm} \longleftarrow \vspace{-0.3cm} \\ \longrightarrow \end{array}}}_{\nu_n^-} S_n
\end{equation}
where
\begin{equation}
\label{kinetics2}
\begin{split}
\nu_i^+ & = \frac{V_i^+ [S_i]}{1 + \sum_{j=0}^{n-1} K_{ij}^+ [S_j]}, \; \; \; i=0, \ldots, n-1 \\
\nu_i^- & = \frac{V_i^- [S_i]}{1 + \sum_{j=1}^n K_{ij}^- [S_j]}, \; \; \; i=1, \ldots, n,
\end{split}
\end{equation}
where $V_i^+, K_{ij}^+ > 0$, $i, j=0, \ldots, n-1$, and $V_i^-, K_{ij}^- > 0$, $i,j=1,\ldots, n$, are constants which depend upon the original rate constants and total enzyme concentrations $E_T$ and $F_T$.
\end{lemma}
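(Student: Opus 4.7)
The plan is to apply the standard quasi-steady state reduction to each network in turn. For both (\ref{processive}) and (\ref{distributive}), I would first write out the mass-action ODEs (\ref{de}) for the intermediate species $ES_i$ and $FS_i$ together with the substrates $S_i$, then set the derivatives of every intermediate to zero. The resulting algebraic system, combined with the conservation laws (\ref{eq:totals}), determines $[E]$ (and $[F]$) together with every $[ES_i]$ (and $[FS_i]$) as explicit functions of the substrate concentrations. Substituting these back into the ODEs for the substrates yields the reduced system whose effective rate laws I must show take the claimed Michaelis--Menten form. Since the $E$-catalyzed and $F$-catalyzed branches involve disjoint sets of intermediates, the two QSSA analyses decouple completely, and it suffices to treat the phosphorylation (``$+$'') branch in detail; the dephosphorylation (``$-$'') branch is symmetric.

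For the processive network (\ref{processive}), the QSSA equations $d[ES_i]/dt = 0$ form a linear tridiagonal system in $([ES_0], \ldots, [ES_{n-1}])$ whose only inhomogeneous term is $k_1[S_0][E]$. Solving by back-substitution from the boundary equation at $i = n-1$ expresses each $[ES_i]$ as a positive constant multiple of $[S_0][E]$, say $[ES_i] = \beta_i [S_0][E]$. Plugging into $E_T = [E] + \sum_i [ES_i]$ gives $[E] = E_T/(1 + B[S_0])$ with $B = \sum_i \beta_i$, and hence the effective rate of production of $S_n$ from the $E$-branch is
\[
k_{2n+1}[ES_{n-1}] \;=\; \frac{k_{2n+1}\beta_{n-1}E_T\,[S_0]}{1 + B[S_0]} \;=\; \frac{V^+[S_0]}{K^+ + [S_0]},
\]
matching (\ref{kinetics1}) with $V^+ = k_{2n+1}\beta_{n-1}E_T/B$ and $K^+ = 1/B$.

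For the distributive network (\ref{distributive}), each QSSA equation decouples individually into $k_{3i+1}[E][S_i] = (k_{3i+2} + k_{3i+3})[ES_i]$, so $[ES_i] = \alpha_i [E][S_i]$ with $\alpha_i = k_{3i+1}/(k_{3i+2}+k_{3i+3})$. Using the conservation law to solve for $[E]$ then yields $[E] = E_T/(1 + \sum_j \alpha_j[S_j])$, and hence the effective rate of the $i$-th phosphorylation step is
\[
k_{3i+3}[ES_i] \;=\; \frac{k_{3i+3}\alpha_i E_T\,[S_i]}{1 + \sum_{j=0}^{n-1} \alpha_j[S_j]},
\]
which is precisely (\ref{kinetics2}) with $V_i^+ = k_{3i+3}\alpha_i E_T$ and $K_{ij}^+ = \alpha_j$ (independent of $i$, but consistent with the stated form).

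The main technical step is verifying that the processive tridiagonal QSSA system admits a unique positive solution with strictly positive coefficients $\beta_i$, so that both $V^+$ and $K^+$ are positive. This follows because the coefficient matrix is a strictly diagonally dominant $M$-matrix with a nonnegative rank-one source term; positivity of the $\beta_i$ can also be checked directly by induction from the recurrence produced during back-substitution. The remaining work---carrying out the algebra, reading off the constants, and repeating the calculation on the $F$-branch to produce the ``$-$'' rate laws---is routine and is what I would defer to the Supplemental Material.
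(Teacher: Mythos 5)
Your proposal is correct and follows the same overall strategy as the paper's supplemental proof: impose $d[ES_i]/dt = d[FS_i]/dt = 0$, combine with the conservation laws (\ref{eq:totals}) to express the intermediate concentrations in terms of the substrate concentrations, substitute back to read off the generalized Michaelis--Menten rates, and treat the two enzyme branches independently. The one genuine difference is in the processive case. The paper writes down a closed-form candidate for $[ES_i]$ (a positive multiple of $E_T[S_0]/(1+B[S_0])$) and verifies it by direct substitution, but the QSSA equations it substitutes into are those of an \emph{irreversible} chain: the contributions $k_{2i+4}[ES_{i+1}]$ from the reverse reactions $ES_{i+1}\to ES_i$ are dropped, so the verified formulas are not literally the steady-state solution of the reversible network (\ref{processive}) as displayed, although the functional form, and hence the lemma, is unaffected. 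Your back-substitution on the full tridiagonal system, together with the diagonal-dominance observation guaranteeing that each $\beta_i$ is positive, handles the reversible chain as actually written and is in that respect more careful than the paper's computation. Your distributive computation coincides with the paper's: each QSSA equation decouples, giving $[ES_i]=\alpha_i[E][S_i]$ with $\alpha_i = k_{3i+1}/(k_{3i+2}+k_{3i+3})$, and the conservation law produces the common denominator $1+\sum_j\alpha_j[S_j]$, including the (harmless) observation that the resulting $K^+_{ij}$ does not actually depend on $i$.
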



In order to apply the endotacticity results of Section \ref{knownresultssection} we require that the systems are $\kappa$-variable mass action systems. We therefore present the following result.

\begin{lemma}
\label{lemma2}
Consider the chain of reactions $\displaystyle{S_0 \stackrel{\nu_0}{\longrightarrow} S_1 \stackrel{\nu_1}{\longrightarrow} \cdots \stackrel{\nu_{n-1}}{\longrightarrow} S_n}$ where each reaction has the form
\begin{equation}
\label{generalizedmm}
\nu_i = \frac{V_i[S_i]}{1 + \sum_{j=0}^{n-1} K_{ij} [S_{i}]}
\end{equation}
where $V_i, K_{ij} > 0$, $i,j = 0, \ldots, n-1$, are constants. Suppose that each concentration $[S_i](t)$ remains bounded for all $t \geq 0$. Then there exists an $\eta > 0$ such that, for all $t \geq 0$ and all $i=0, \ldots, n-1$,
\[\eta \; [S_i](t) \leq \nu_i \big([S_0](t), \ldots, [S_{n-1}](t) \big) \leq \frac{1}{\eta} \; [S_i](t).\]
\end{lemma}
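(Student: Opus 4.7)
The plan is to exhibit the rate $\nu_i$ as a product of a time-varying ``rate constant'' and the concentration $[S_i]$, and then bound that rate constant above and below using the assumed boundedness of trajectories.

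First I would write
\[
\nu_i\bigl([S_0](t),\ldots,[S_{n-1}](t)\bigr) \;=\; \kappa_i(t)\cdot [S_i](t), \qquad
\kappa_i(t) \;:=\; \frac{V_i}{1+\sum_{j=0}^{n-1} K_{ij}[S_j](t)},
\]
so that the question reduces to showing $\kappa_i(t)$ stays in a compact subinterval of $(0,\infty)$ uniformly in $t$ and in $i\in\{0,\ldots,n-1\}$. The upper bound is immediate: since $K_{ij}>0$ and $[S_j](t)\geq 0$, the denominator is at least $1$, hence $\kappa_i(t)\leq V_i$. For the lower bound, I would invoke the hypothesis that every $[S_j](t)$ is bounded on $[0,\infty)$ to pick a uniform constant $M>0$ with $[S_j](t)\leq M$ for all $t\geq 0$ and all $j$. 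Then
\[
1+\sum_{j=0}^{n-1}K_{ij}[S_j](t) \;\leq\; 1+M\sum_{j=0}^{n-1}K_{ij},
\]
which gives $\kappa_i(t)\geq V_i\Big/\bigl(1+M\sum_{j}K_{ij}\bigr)>0$.

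Finally I would set
\[
\eta \;:=\; \min_{0\leq i\leq n-1}\;\min\!\left\{\,\frac{1}{V_i},\;\frac{V_i}{1+M\sum_{j=0}^{n-1}K_{ij}}\,\right\} \;>\;0,
\]
a finite minimum of strictly positive numbers (since there are only $n$ reactions). By construction $\eta\leq \kappa_i(t)$ and $\eta\leq 1/V_i$, i.e.\ $\kappa_i(t)\leq 1/\eta$, for every $i$ and every $t\geq 0$. Multiplying through by the nonnegative factor $[S_i](t)$ yields the desired sandwich $\eta\,[S_i](t)\leq \nu_i\leq \tfrac{1}{\eta}[S_i](t)$.

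There is no real obstacle here; the only thing to be a little careful about is making sure the choice of $\eta$ is uniform in both $i$ and $t$. Uniformity in $t$ comes from the a priori bound $M$ supplied by the boundedness hypothesis (without which $\kappa_i(t)$ could decay to $0$), and uniformity in $i$ comes from taking a minimum over the finite index set. Note also that the statement as written appears to have a typo in the denominator of \eqref{generalizedmm} ($[S_i]$ should presumably be $[S_j]$), but this has no effect on the argument: the denominator is still a positive affine function of the bounded concentrations, and the same two-sided estimate applies.
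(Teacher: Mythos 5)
Your argument is correct and is essentially the same as the paper's: both bound the denominator of $\nu_i$ below by $1$ and above using the a priori bound on the concentrations, then choose $\eta$ uniformly over the finite set of reactions. Your version is slightly more explicit about the final choice of $\eta$ (the paper leaves it as ``appropriately picking $\eta>0$'') and correctly flags the $[S_i]$-versus-$[S_j]$ typo in the denominator, but there is no substantive difference in method.
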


\begin{proof}
Suppose that every $[S_i](t)$, $i=0, \ldots, n-1$, is bounded from above by a value $[S_i]_M > 0$ so that $0 \leq [S_i](t) \leq [S_i]_M$ for all $i=0, \ldots, n-1$, and all $ t \geq 0$. By bounding the denominator of (\ref{generalizedmm}), it follows that
\[\frac{V_i}{1+\sum_{j=0}^{n-1} K_{ij}[S_i]_M}  [S_i](t) \leq \frac{V_i [S_i](t)}{1+\sum_{j=0}^{n-1} K_{ij} [S_i](t)} \leq V_i [S_i](t)\]
for all $t \geq 0$ and all $i=0, \ldots, n-1$. After appropriately picking $\eta > 0$ we are done.
\end{proof}

Lemma \ref{lemma2} clearly captures both the forward (phosphorylation) and backward (dephosphorylation) chains in (\ref{qssadistributive}). It also captures (\ref{qssaprocessive}) trivially by taking $n=1$. We therefore present the following result.



\begin{theorem}
The processive and distributive phosphorylation / dephosphorylation networks (\ref{qssaprocessive}) and (\ref{qssadistributive}) are permanent when modeled with kinetics (\ref{kinetics1}) and (\ref{kinetics2}), respectively.
\end{theorem}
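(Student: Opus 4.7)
The plan is to combine the projection criterion of Proposition \ref{prop:lineNetw} with the kinetic bounds of Lemma \ref{lemma2} and apply Theorem \ref{theorem2}. Specifically, I will establish that (i) both reduced networks (\ref{qssaprocessive}) and (\ref{qssadistributive}) are strongly endotactic, and (ii) the Michaelis--Menten rates (\ref{kinetics1})--(\ref{kinetics2}) can be realized in $\kappa$-variable mass action form on these networks. Strong endotacticity together with a $\kappa$-variable mass action representation then yields permanence directly from Theorem \ref{theorem2}.

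For (i), by Proposition \ref{prop:lineNetw} it suffices to verify strong endotacticity of the $\mathbf{w}$-projected network for every $\mathbf{w}$. The processive reduced network has only the two complexes $S_0$ and $S_n$ and the two reactions between them; if the projected values of these complexes coincide the projected network consists of two self-loops and is trivially strongly endotactic, and otherwise the unique first-swept complex has an outgoing reaction to the second complex whose projected difference is strictly positive, meeting the single-species strong endotacticity criterion from Section \ref{projectedsection}. For the distributive reduced network the complexes $S_0, \ldots, S_n$ are the standard basis vectors of $\mathbb{R}^{n+1}$, so the projection sends $S_i$ to $w_i$; let $M$ and $M'$ denote the index sets achieving the minimum and maximum of $w_0, \ldots, w_n$. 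If $M = \{0, \ldots, n\}$ the projection collapses to self-loops and is trivially strongly endotactic. Otherwise, connectedness of the reversible path forces some $i \in M$ adjacent to some $j \notin M$, and the reaction $S_i \to S_j$ projects to a strictly outward reaction from a first-swept extremum; a symmetric argument applied at $M'$ completes the verification.

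For (ii), I first use the total-substrate conservation of the original networks: every substrate molecule is present either as a free $S_i$ or inside an enzyme-bound intermediate, so in both mechanisms $\sum_i [S_i](t) \le S_T$ for all $t \ge 0$, where $S_T$ is the total substrate amount fixed by the initial conditions together with conservation laws analogous to (\ref{eq:totals}). Each $[S_i](t)$ is therefore uniformly bounded. Lemma \ref{lemma2} now applies directly: with $n = 1$ to the processive rates (\ref{kinetics1}), and separately to the forward chain $S_0 \to \cdots \to S_n$ and the reverse chain $S_n \to \cdots \to S_0$ of (\ref{kinetics2}) in the distributive case. This supplies an $\eta > 0$ such that every reaction rate can be written as $\kappa_k(t) \mathbf{x}(t)^{y_{\rho(k)}}$ with $\kappa_k(t) \in [\eta, 1/\eta]$, which is precisely the $\kappa$-variable mass action form (\ref{de}) of the reduced network. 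Theorem \ref{theorem2} then yields permanence.

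The step I expect to require the most care is the strong endotacticity verification for the distributive chain, since the extremal index set $M$ need not be contiguous and may equal $\{0, \ldots, n\}$ in the degenerate case $w_0 = \cdots = w_n$; the path-connectivity argument handles every projection uniformly but must be stated so that each non-collapsing case produces an outward-pointing reaction from a first-swept extremum. A secondary technical point is extracting the uniform substrate bound from the full conservation laws (\ref{eq:totals}) after the QSSA reduction has been performed, so that the hypothesis of Lemma \ref{lemma2} applies to the reduced dynamics rather than only to the original unreduced system.
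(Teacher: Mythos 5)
Your proof is correct and follows the same overall architecture as the paper's: establish strong endotacticity of the reduced networks, obtain the $\kappa$-variable mass action form from Lemma \ref{lemma2} together with a uniform bound on the $[S_i]$, and conclude by Theorem \ref{theorem2}. The one place where you take a genuinely different route is the strong endotacticity step. The paper dispatches it in one line by noting that both reduced networks are weakly reversible with a single linkage class and invoking the fact (from \cite{Gopal2014}) that such networks are automatically strongly endotactic; you instead verify strong endotacticity from scratch via Proposition \ref{prop:lineNetw}, checking every $\mathbf{w}$-projected single-species network against the extreme-source-complex criterion of Section \ref{projectedsection}. Your argument is sound --- the case analysis on the extremal index sets $M$, $M'$ together with path-connectivity of the reversible chain does produce an outward-pointing reaction from each extremal projected source complex in every non-collapsing projection --- and it has the virtue of being self-contained, at the cost of being longer than the citation. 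On the boundedness point, your flagged concern about transferring the substrate bound from the unreduced system to the reduced dynamics is unnecessary: the reduced networks (\ref{qssaprocessive}) and (\ref{qssadistributive}) conserve $[S_0]+\cdots+[S_n]$ on their own, since every reaction converts one $S_i$ into another, and this is precisely the conservation law the paper invokes; hence the hypothesis of Lemma \ref{lemma2} holds for the reduced dynamics directly, with no appeal to the original mechanism or to (\ref{eq:totals}).
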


\begin{proof}
Both (\ref{qssaprocessive}) and (\ref{qssadistributive}) consist of a single linkage class and are weakly reversible so that the networks are strongly endotactic. In order to apply Lemma \ref{lemma2}, it remains only to bound $[S_i](t)$. We notice, however, that we have the conservations $S_T = [S_0] + [S_n]$ for (\ref{qssaprocessive}) and $S_T = [S_0] + [S_1] + \cdots + [S_n]$ for (\ref{qssadistributive}). It follows Lemma \ref{lemma2} that the systems may be modeled as $\kappa$-variable mass action systems. The systems are therefore permanent by Theorem \ref{theorem2}, and we are done.
\end{proof}

\subsection{Circadian clock network}
\label{clocksection}

Circadian clocks are highly complicated biochemical oscillators which underlie an organism's ability to maintain 24-hour sleeping and feeding cycles, among other regulatory functions. These clocks are commonly entrained by such as external stimuli as light and heat but will persistent in roughly 24-hour cycles when the external stimuli are removed. The mathematical models of these clocks have been studied significantly in the past twenty years. Common features in the proposed mechanisms include posttranslational modification and negative feedback loops \cite{L-G,Pokhilko2010,Pokhilko2013}.

In this section, we focus specifically on the {\it period} (PER) / {\it timeless} (TIM) protein regulatory mechanism in {\it Drosophila} introduced by LeLoup and Goldbeter in \cite{L-G}. The network structure is given by
\[
\mbox{{\it \textbf{(Clock full)}}} \; \; \left\{ \begin{array}{l} \\ P_2 + T_2 \mathop{\stackrel{MA}{\begin{array}{c} \vspace{-0.25cm} \longleftarrow \vspace{-0.3cm} \\ \longrightarrow \end{array}}} C \mathop{\stackrel{MA}{\begin{array}{c} \vspace{-0.25cm} \longleftarrow \vspace{-0.3cm} \\ \longrightarrow \end{array}}} C_N \; \longrightarrow \; 0 \\ \\ \end{array} \hspace{-0.075in} \begin{array}{c} \\[-0.12in] \nearrow \\[0.1in] \searrow \end{array} \hspace{-0.075in} \begin{array}{l} M_P \stackrel{mix}{\longrightarrow} P_0 \mathop{\stackrel{MM}{\begin{array}{c} \vspace{-0.25cm} \longleftarrow \vspace{-0.3cm} \\ \longrightarrow \end{array}}} P_1 \mathop{\stackrel{MM}{\begin{array}{c} \vspace{-0.25cm} \longleftarrow \vspace{-0.3cm} \\ \longrightarrow \end{array}}} P_2 \\[0.15in] \\ M_T \stackrel{mix}{\longrightarrow} T_0 \mathop{\stackrel{MM}{\begin{array}{c} \vspace{-0.25cm} \longleftarrow \vspace{-0.3cm} \\ \longrightarrow \end{array}}} T_1 \mathop{\stackrel{MM}{\begin{array}{c} \vspace{-0.25cm} \longleftarrow \vspace{-0.3cm} \\ \longrightarrow \end{array}}} T_2 \end{array} \right.
\]
where we have the following constituent species:
\begin{itemize}
\item $M_P$ and $M_T$: mRNA coding for PER and TIM
\item $P_0$, $P_1$, and $P_2$: un-, mono-, and bi-phosphorylated PER
\item $T_0$, $T_1$, and $T_2$: un-, mono-, and bi-phosphorylated TIM
\item $C$ and $C_N$: PER-TIM compound (cytoplasmic and nucleic form).
\end{itemize}
The kinetic terms associated with the interactions are labeled as follows: MA = mass action, MM = Michaelis-Menten, mix = combination of mass action (target) and Michaelis-Menten (source). In addition to the interaction displayed, there is also Hill-type inhibition from $C_N$ on the input of $M_P$ and $M_T$ and linear degradation in all species (i.e. $M_i \to 0$, $C_i \to 0$, $P_i \to 0$, $T_i \to 0$). The full dynamical model is presented in the Supplemental Material.

Elements of the full clock model are also present in the papers \cite{Pokhilko2010} and \cite{Pokhilko2013} by Pokhilko {\it et al.} In particular, all three papers contain a negative feedback loop. In the full clock model, this feedback is the result of the PER-TIM compound inhibiting the transcription of the messenger RNA which codes for PER and TIM. This is shown in \cite{L-G} to lead to oscillations in the concentrations of PER and TIM where the oscillations can be made to be the order of 24 hours through parameter selection.


In order to apply the computational procedure of Section \ref{mainsection} and the theory outlined in Section \ref{endotacticnetworkssection} we require that the system can be written as a $\kappa$-variable mass action system. We therefore introduce the following reduced model:
\[
\mbox{{\it \textbf{(Clock reduced)}}} \; \; \left\{ \begin{array}{l} \\ P_2 + T_2 \mathop{\begin{array}{c} \vspace{-0.25cm} \longleftarrow \vspace{-0.3cm} \\ \longrightarrow \end{array}} C \mathop{\begin{array}{c} \vspace{-0.25cm} \longleftarrow \vspace{-0.3cm} \\ \longrightarrow \end{array}} C_N \; \longrightarrow \; 0 \\ \\ \end{array} \hspace{-0.075in} \begin{array}{c} \\[-0.18in] \nearrow \\[0.1in] \searrow \end{array} \hspace{-0.075in} \begin{array}{l} P_0 \mathop{\begin{array}{c} \vspace{-0.25cm} \longleftarrow \vspace{-0.3cm} \\ \longrightarrow \end{array}} P_1 \mathop{\begin{array}{c} \vspace{-0.25cm} \longleftarrow \vspace{-0.3cm} \\ \longrightarrow \end{array}} P_2 \\[0.2in] \\ T_0 \mathop{\begin{array}{c} \vspace{-0.25cm} \longleftarrow \vspace{-0.3cm} \\ \longrightarrow \end{array}} T_1 \mathop{\begin{array}{c} \vspace{-0.25cm} \longleftarrow \vspace{-0.3cm} \\ \longrightarrow \end{array}} T_2 \end{array} \right.
\]
\noindent We make the following notes about the reduced clock network. Firstly, all terms have $\kappa$-variable mass action form. Secondly, all species undergo linear degradation not shown in the figure for visual clarity. Thirdly, we have removed $M_P$ and $M_T$, corresponding to messenger RNAs coding for PER and TIM, from the full clock network. They have been bounded and absorbed into the rates for the influx of $P_0$ and $T_0$, respectively. Fourthly, the reduced clock network is a single linkage class network but is neither reversible nor weakly reversible. Notably, it is not weakly reversible because there is no path from any complex the right of the zero complex to any complex on the left of it.

We present the following result, the proof of which can be found in the Supplemental Material.

\begin{lemma}
\label{correspondence}
The full clock network with various kinetic rates can be represented with the reduced clock network with $\kappa$-variable mass action kinetics.
\end{lemma}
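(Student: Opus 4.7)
The plan is to build the correspondence in three stages. First, establish a priori boundedness of every species in the full clock model. Because every species $X$ appears in a linear degradation reaction $X \to 0$ and the only sources into $M_P$, $M_T$ and into $P_0$, $T_0$ are Hill-type terms bounded above by a constant (the Hill inhibition factor is $V\,K^h/(K^h+[C_N]^h) \le V$), each scalar equation is of the form $\dot{z} \le C - \delta z + (\text{mass-action input from bounded species})$, so an inductive argument propagates upper bounds: $M_P, M_T$ are bounded first; this bounds the (Michaelis-Menten) influxes into $P_0$ and $T_0$; the PER and TIM chains together with the $P_2+T_2 \leftrightarrow C \leftrightarrow C_N$ module then yield uniform upper bounds on $P_i$, $T_i$, $C$, $C_N$ via the mass-balance and degradation terms. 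This furnishes constants $[X]_M > 0$ with $0 \le [X](t) \le [X]_M$ for all $t \ge 0$ and every species $X$.

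Second, rewrite each non-mass-action rate as a $\kappa$-variable mass action rate. Exactly as in Lemma~\ref{lemma2}, for a Michaelis-Menten term $V[X]/(K+[X])$ with $[X]$ bounded we have
\[
\frac{V}{K+[X]_M}\,[X](t) \;\le\; \frac{V\,[X](t)}{K+[X](t)} \;\le\; \frac{V}{K}\,[X](t),
\]
so the rate equals $\kappa(t)[X](t)$ with $\kappa(t)$ in a compact subinterval of $(0,\infty)$. The same device applies to the Hill inhibition factor $V\,K^h/(K^h+[C_N]^h)$ multiplying the mRNA production: since $[C_N]$ is bounded, this factor lies in a compact interval away from $0$, and can therefore be absorbed into a time-varying rate constant multiplying a constant (equivalently, a rate $\kappa(t)\cdot 1$ associated to the zero source complex).

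Third, eliminate $M_P$ and $M_T$. For each, the governing equation has the form $\dot M = f(t) - \delta M$ where $f(t)$ is the Hill-inhibited production rate (already shown to be bounded within a compact interval bounded away from $0$ once $[C_N]$ is bounded). Hence $M_P(t)$ and $M_T(t)$ are themselves bounded above and, after an initial transient, bounded below away from $0$. The Michaelis-Menten translation rates $v_s M_P/(K_s+M_P)$ feeding into $P_0$, and analogously for $T_0$, are then of the form $\kappa_P(t)\cdot 1$ with $\kappa_P(t)$ in a compact interval bounded away from $0$, by the same pinching argument as above. This is precisely the meaning of the arrow $0 \to P_0$ (and $0 \to T_0$) in the reduced network under $\kappa$-variable mass action. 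The remaining binding, unbinding, nuclear translocation and degradation reactions are already mass action in the full model, so they appear unchanged (with constant $\kappa$'s) in the reduced network.

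The main obstacle is the first stage: ensuring that the boundedness argument closes, in particular that the chain of propagation from $M_P, M_T$ through $P_i, T_i$ into $C, C_N$ does not produce a feedback that destroys the a priori upper bounds. Once boundedness is in hand, the remainder is a routine pinching that realizes each non-mass-action rate as a $\kappa$-variable mass action rate with $\kappa(t)\in[\eta,1/\eta]$ for some uniform $\eta>0$, yielding the reduced clock network exactly as displayed.
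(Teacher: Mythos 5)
Your proposal follows the same three-stage structure as the paper's proof: (1) a priori boundedness of all species, (2) lower bounds on $M_P$, $M_T$ away from zero, (3) pinching each Michaelis--Menten and Hill term between constant multiples of the relevant concentration so that every rate becomes $\kappa$-variable mass action, with $M_P$, $M_T$ absorbed into the influx rates for $P_0$, $T_0$. Stages (2) and (3) match the paper essentially verbatim (one small slip: the translation rate feeding $P_0$ in the model is the linear term $k_{sP}M_P$, not a Michaelis--Menten term, but this only makes the absorption easier). The one genuine divergence is stage (1), and it is exactly the point you flag as ``the main obstacle'': your inductive cascade stalls at the $P_2+T_2\rightleftarrows C\rightleftarrows C_N$ module, because $\dot P_2$ contains the input $+k_4 C$ and $\dot C$ contains $+k_3P_2T_2$, so neither bound can be obtained before the other. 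The paper sidesteps this by working with a single linear functional $H=(k_{sP}+1)M_P+(k_{sT}+1)M_T+k_d\sum_i(P_i+T_i)+2k_d(C+C_N)$: the weights are chosen so that the binding/unbinding and translocation terms cancel, the saturating production terms are bounded by a constant $k^*$, and the degradation terms force $\dot H<-\epsilon$ whenever $H\geq H^*$, giving eventual boundedness of everything at once. Your cascade can be repaired in the same spirit by applying the differential inequality not to $P_2$, $T_2$, $C$, $C_N$ separately but to the combination $W=P_2+T_2+2C+2C_N$, in which the $\pm k_3P_2T_2$, $\pm k_4C$, and $\pm k_1C,\pm k_2C_N$ terms cancel and the only inputs are the saturating terms $V_{3P}P_1/(K_{3P}+P_1)+V_{3T}T_1/(K_{3T}+T_1)\leq V_{3P}+V_{3T}$; either route works, but as written your stage (1) is an acknowledged gap rather than a proof.
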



We now apply the MILP procedure outlined in Section \ref{mainsection} to prove the following result.
\begin{theorem}
\label{pertimtheorem}
The full clock network with various kinetic rates is permanent for all choices of positive parameter values.
\end{theorem}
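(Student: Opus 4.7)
The plan is to reduce the problem in two stages: first from the full clock network to the reduced clock network via Lemma \ref{correspondence}, and then from the reduced network to the known permanence result for strongly endotactic systems (Theorem \ref{theorem2}).

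First I would invoke Lemma \ref{correspondence} to rewrite the full clock dynamics as a $\kappa$-variable mass action system on the reduced clock network. This moves the argument into the setting where Theorem \ref{theorem2} can be applied, provided the reduced network is strongly endotactic. The reduced network is a single linkage class but is not weakly reversible (the zero complex acts as a ``sink'' that no reaction path ever leaves), so weak reversibility results cannot be used and endotacticity must be verified directly.

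Next I would apply the MILP procedure of Section \ref{milpsendo} (as implemented in \textsf{CoNtRol}) to the reduced clock network. Concretely, one lists the species and reactions, builds the stoichiometric matrix $\Gamma$ and source-complex matrix $Y$ according to (\ref{parameters}), and runs the optimization (\ref{endo}) over the constraint sets (\ref{E1}), (\ref{E2}), (\ref{SE3a}), (\ref{SE3b}). The strong endotacticity holds if the optimum is $0$. Heuristically one expects this to succeed: each non-zero complex $X_i$ has a degradation reaction $X_i \to 0$, so for any direction $\mathbf{w} \in \mathbb{R}^m$ the extreme source complex (in the $\mathbf{w}$-projection) possesses an outgoing reaction whose reaction vector points strictly toward the other sources, which is exactly the single-species characterization of strong endotacticity discussed after Proposition \ref{prop:lineNetw}.

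Finally, once strong endotacticity is established, Theorem \ref{theorem2} yields permanence of the reduced $\kappa$-variable mass action system, and Lemma \ref{correspondence} transports this conclusion back to the full clock network for arbitrary positive parameter values. The main obstacle I anticipate is the MILP verification step: while the intuitive ``everything flows to zero'' structure suggests strong endotacticity, formally confirming it for a network of this size (several phosphoforms of PER and TIM plus the cytoplasmic/nuclear complexes, together with all bidirectional interconversions and degradation edges) requires running the algorithm rather than reasoning by hand, and one must be careful that the degradation reactions of the absorbed species $M_P, M_T$ have been correctly accounted for in the $\kappa$-variable envelope produced by Lemma \ref{correspondence} so that the compact bounds on the time-varying rates required by Theorem \ref{theorem2} are legitimately inherited.
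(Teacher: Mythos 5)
Your proposal matches the paper's proof exactly: both reduce the full clock network to the reduced clock network with $\kappa$-variable mass action kinetics via Lemma \ref{correspondence}, verify strong endotacticity of the reduced network computationally with the {\sf CoNtRol} implementation of the MILP algorithm, and then conclude permanence from Theorem \ref{theorem2}. Your additional heuristic about degradation reactions forcing every $\mathbf{w}$-projection to be strongly endotactic is a reasonable sanity check (and is essentially the idea formalized later for the general clock network in Theorem \ref{generalprop}), but, as you note, the paper likewise relies on the computational verification for the reduced network itself.
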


\begin{proof}
The implementation of the strongly endotactic algorithm in {\sf CoNtRol} confirms that the reduced clock network is strongly endotactic by application of Lemma \ref{endolemma}. It follows from Theorem \ref{theorem2} (Theorem 1.1, \cite{Gopal2014}) that the network is permanent under any choice of $\kappa$-variable mass action kinetics. Since the full clock network and corresponding dynamics can be written as the reduced network with $\kappa$-variable mass action kinetics by Lemma \ref{correspondence}, we are done.
\end{proof}

Theorem \ref{pertimtheorem} suggests a general mechanism which is not weakly reversible but which is strongly endotactic. We split this mechanism into a basic and general cases, given by the following structures:
\[
\begin{array}{l}
\; \; \; \; \mbox{{\it \textbf{(Clock basic)}}} \; \; \left\{ \begin{array}{l} \\ P + T \mathop{\begin{array}{c} \vspace{-0.25cm} \longleftarrow \vspace{-0.3cm} \\ \longrightarrow \end{array}} C \; \longrightarrow \; 0 \\ \\ \end{array} \hspace{-0.075in} \begin{array}{c} \\[-0.15in] \nearrow \\[0.15in] \searrow \end{array} \hspace{-0.24in} \begin{array}{c} \\[-0.14in] \swarrow \\[0.1in] \nwarrow \end{array} \hspace{-0.075in} \begin{array}{l} P \\[0.2in] \\ T \end{array} \right. \\[0.4in]
\mbox{{\it \textbf{(Clock general)}}} \; \; \left\{ \begin{array}{l} \\ P_{n_P} + T_{n_T} \mathop{\begin{array}{c} \vspace{-0.25cm} \longleftarrow \vspace{-0.3cm} \\ \longrightarrow \end{array}} C_0 \mathop{\begin{array}{c} \vspace{-0.25cm} \longleftarrow \vspace{-0.3cm} \\ \longrightarrow \end{array}} \cdots \mathop{\begin{array}{c} \vspace{-0.25cm} \longleftarrow \vspace{-0.3cm} \\ \longrightarrow \end{array}} C_{n_C} \; \longrightarrow \; 0 \\ \\ \end{array} \hspace{-0.075in} \begin{array}{c} \\[-0.18in] \nearrow \\[0.15in] \searrow \end{array} \hspace{-0.075in} \begin{array}{l} P_0 \mathop{\begin{array}{c} \vspace{-0.25cm} \longleftarrow \vspace{-0.3cm} \\ \longrightarrow \end{array}} \cdots \mathop{\begin{array}{c} \vspace{-0.25cm} \longleftarrow \vspace{-0.3cm} \\ \longrightarrow \end{array}} P_{n_P} \\[0.2in] \\ T_0 \mathop{\begin{array}{c} \vspace{-0.25cm} \longleftarrow \vspace{-0.3cm} \\ \longrightarrow \end{array}} \cdots \mathop{\begin{array}{c} \vspace{-0.25cm} \longleftarrow \vspace{-0.3cm} \\ \longrightarrow \end{array}} T_{n_T} \end{array} \right.
\end{array}
\]
The basic clock network is presented in complete form while the general clock network contains unshown linear degradation reactions. A representation of the basic clock network in the species space of the $(P,T,C)$-plane is given in Figure \ref{generalnetwork}(A). Networks of the general clock form appear frequently in models of circadian clocks. In \cite{Pokhilko2010}, Pokhilko {\it et al.} present a model consisting of $19$ species. In \cite{Pokhilko2013}, Pokhilko {\it et al.} present a model consisting of $32$ species. Both mechanisms are shown to be strongly endotactic by the algorithm presented in Section \ref{milpsendo}.

\begin{figure}[t]
\begin{center}
\includegraphics[width=10cm]{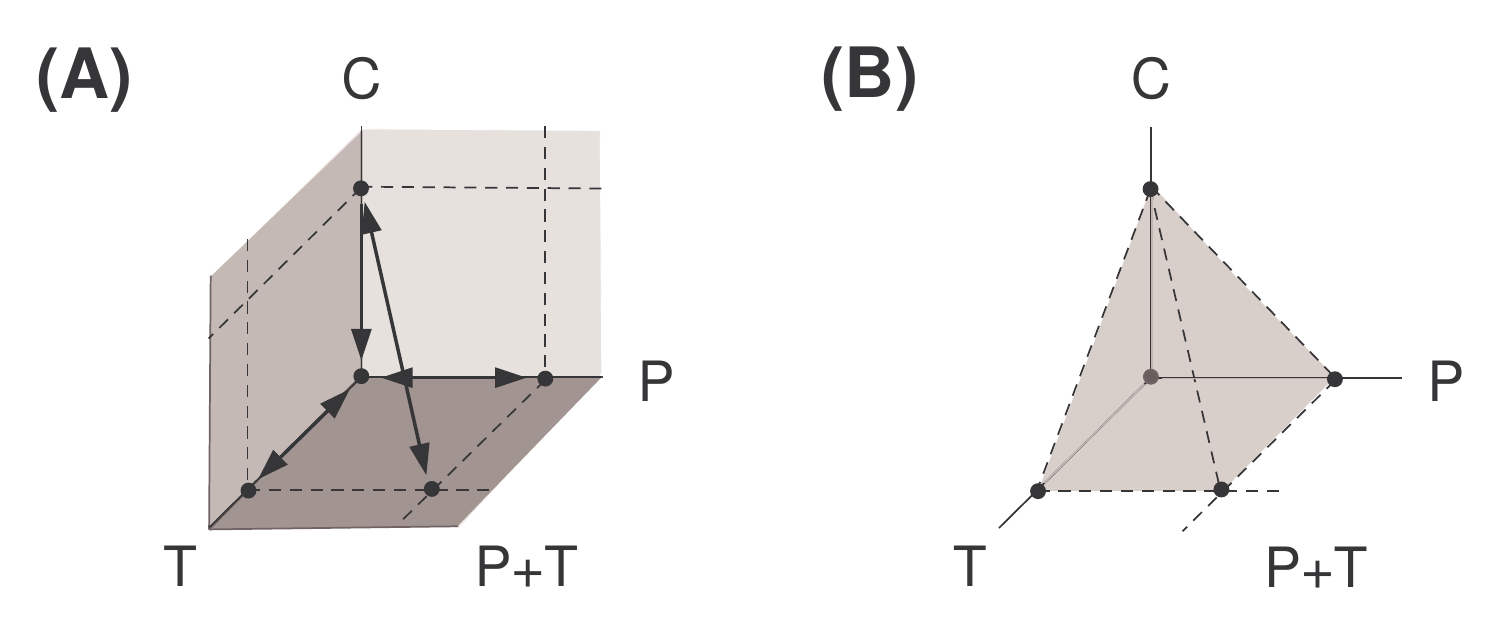}
\end{center}
\caption{\small In (A), we have a representation of the basic clock network in the species space of the $(P,T,C)$-plane. The network is not weakly reversible. In (B), we have the convex hull of the source complexes. It can be easily verified by visual inspection using the parallel sweep test that the network is strongly endotactic.}
\label{generalnetwork}
\end{figure}

We now prove that the general clock network is strongly endotactic and therefore permanent under $\kappa$-variable kinetics.

\begin{theorem}
\label{generalprop}
The general clock network is permanent when modeled with $\kappa$-variable mass action kinetics.
\end{theorem}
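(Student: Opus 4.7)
The plan is to apply Theorem \ref{theorem2} of Gopalkrishnan et al.: every strongly endotactic $\kappa$-variable mass action system is permanent. The task therefore reduces to showing that the general clock network is strongly endotactic, which I would argue directly via the geometric parallel sweep test discussed before Proposition \ref{prop:lineNetw}, rather than invoking the MILP routine of Section \ref{milpsendo}; a direct argument works uniformly in $n_P$, $n_T$, and $n_C$.

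The source complexes of the network are the zero complex $0$, the monomolecular complexes $P_i$, $T_j$, $C_k$, and the single bimolecular complex $P_{n_P}+T_{n_T}$; their $\mathbf{w}$-projections for a given $\mathbf{w}\in\mathbb{R}^m$ are $0$, $w_{P_i}$, $w_{T_j}$, $w_{C_k}$, and $w_{P_{n_P}}+w_{T_{n_T}}$ respectively. Writing $S^*$ for the set of source complexes that achieve the minimum $\mathbf{w}$-projection, I need to verify that whenever some reaction has strictly negative $\mathbf{w}$-projection, some reaction with source in $S^*$ has strictly positive $\mathbf{w}$-projection. I would split the verification by the sign pattern of the components of $\mathbf{w}$.

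If every component of $\mathbf{w}$ is nonnegative (so the minimum is $0$ and $0\in S^*$), I would follow the ``first strictly positive entry'' down whichever chain it first appears on. For the $P$-chain, if $w_{P_{i_0}}$ is the first positive entry, then either $i_0=0$ and the influx reaction $0\to P_0$ saves, or $P_{i_0-1}\in S^*$ and the reaction $P_{i_0-1}\to P_{i_0}$ saves. The same argument handles the $T$-chain; and when all $w_{P_i}$ and $w_{T_j}$ vanish, the complex $P_{n_P}+T_{n_T}$ enters $S^*$, so the bridge reaction $P_{n_P}+T_{n_T}\to C_0$ together with the $C$-chain reactions lets the same sweep reasoning dispatch the $C$-chain.

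When some component of $\mathbf{w}$ is strictly negative, the minimum projection is strictly negative; whenever $S^*$ contains a monomolecular source $X$, the linear degradation reaction $X\to 0$ has projection $-w_X>0$ and immediately supplies the saving reaction. The delicate case, and the main obstacle in the proof, is when only the bimolecular complex $P_{n_P}+T_{n_T}$ minimizes the projection, since this complex has no direct degradation reaction of its own. Here I would use its unique outgoing reaction $P_{n_P}+T_{n_T}\to C_0$: its projection $w_{C_0}-(w_{P_{n_P}}+w_{T_{n_T}})$ is nonnegative by minimality, and is strictly positive unless $C_0\in S^*$, in which case the reaction $C_0\to 0$ saves because $w_{C_0}$ equals the strictly negative minimum. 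These cases together establish strong endotacticity, after which Theorem \ref{theorem2} delivers permanence.
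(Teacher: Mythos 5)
Your proof is correct, but it takes a genuinely different route from the paper's. The paper also reduces the theorem to strong endotacticity plus Theorem \ref{theorem2}, but it establishes strong endotacticity structurally rather than by direct verification: it passes to the $\mathbf{v}$-projected single-species network via Proposition \ref{prop:lineNetw}, observes that the projection has the form $\mathcal{L}_1 \to \mathcal{L}_2$ with $\mathcal{L}_1,\mathcal{L}_2$ strongly connected and $0\in\mathcal{L}_2$, and splits on whether $\mathcal{L}_2$ collapses to the single complex $0$ (in which case all $P_i$ and $T_j$ project to zero, hence so does $P_{n_P}+T_{n_T}$, and the projection is weakly reversible) or contains at least two complexes (in which case every complex is a source and the projection is endotactic); either way it then invokes the fact from \cite{Gopal2014} that an endotactic single-linkage-class network is strongly endotactic. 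You instead verify condition 3 of Definition \ref{endotactic} directly through a sign-pattern case analysis on $\mathbf{w}$, exhibiting the saving reaction explicitly from the influx, degradation, chain, and bridge reactions. The paper's route buys brevity and no case analysis; yours buys self-containedness, since you never need the ``endotactic plus one linkage class implies strongly endotactic'' result, and your cases (all components nonnegative; some component negative with $S^*$ containing a monomolecular source; $S^*=\{P_{n_P}+T_{n_T}\}$) are exhaustive.

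One step you should make explicit: condition 3 of Definition \ref{endotactic} also requires the saving reaction $R_j$ to satisfy $\mathbf{w}\cdot(y_{\rho(j)}-y_{\rho(i)})<0$, i.e.\ the source of the violating reaction must sit strictly above the minimal sources, and your stated criterion (``some reaction with source in $S^*$ has strictly positive projection'') omits this. It does hold here, because every complex of the general clock network --- including every target complex --- is itself a source complex (via $0\to P_0$, the degradations $P_i,T_j,C_k\to 0$, and $P_{n_P}+T_{n_T}\to C_0$); hence a reaction $y\to y'$ with $\mathbf{w}\cdot(y'-y)<0$ has $\mathbf{w}\cdot y>\mathbf{w}\cdot y'\geq \min$, so $y\notin S^*$. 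With that one sentence added, your argument is complete.
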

\begin{proof}
Let $m=(n_C+1)+(n_P+1)+(n_T+1)$ denote the number of species in the network and let $\mathbf{v} \in {\mathbb R}^m$ be picked arbitrarily. In view of Proposition \ref{prop:lineNetw}, it's enough to argue that the projection of the general clock network onto {\bf v} is endotactic.  The projected line network will be of the form ${\cal L}_1\to {\cal L}_2$, where ${\cal L}_1$ and ${\cal L}_2$ are strongly connected components and $0\in\cal L_2$. If ${\cal L}_2$ contains at least two complexes, then each complex is a source complex, and the line network is endotactic; since the network has one linkage class, it follows by \cite{Gopal2014} that in fact it is strongly endotactic. If ${\cal L}_2$ contains only one complex (necessarily the zero complex), then since both complexes $T_{n_T}$ and $P_{n_P}$ project to zero, so does the complex $P_{n_P}+T_{n_T},$ which means that $0\in{\cal L}_1.$ Then the projected line network is weakly reversible; since it has one linkage class, it is strongly endotactic.
\end{proof}

\section{Conclusion}

In this paper, we have addressed the open question of how to determine whether a given chemical reaction network is lower endotactic, endotactic, or strongly endotactic. We have presented a mixed-integer linear programming algorithm capable of establishing or disproving these properties and implemented it within the open-source online software package {\sf CoNtRol}. Notably, existing results were only able to establish these endotacticity properties for networks with at most two constituent species.

We have also applied the algorithm to classify networks with the BioModels Database. This survey has revealed a circadian clock mechanism which is strongly endotactic but not weakly reversible and therefore beyond the scope of existing theory. We have extended this mechanism and shown that this extended model is strongly endotactic. We have also considered the general $n$-site phosphorylation / dephosphorylation network in both processive and distributive forms.

This work raises several important questions for future work:
\begin{enumerate}
\item
{\it How far-reaching is the $\kappa$-variable mass action form?} We have shown that generalized Michaelis-Menten kinetics may be expressed in $\kappa$-variable kinetic form provided the substrates are bounded (Lemma \ref{lemma2}); however, it is unclear which conditions are needed for more general kinetic forms in order to make this transition. This will be the focus of future research.
\item
{\it How wide-spread is endotacticity in biochemical reaction networks?} Our search in the BioModels Database has revealed that the majority of endotactic networks are weakly reversible, and that the majority of strongly endotactic networks are weakly reversible consisting of a single linkage class. Whether this is the general trend, and whether there are more strongly endotactic non-weakly reversible biochemical mechanisms such as the ones analyzed in Section \ref{clocksection}, requires more research.
\end{enumerate}

\section*{Acknowledgments}

M.J. is supported by grant Army Research Office grant W911NF-14-1-0401. P.D. is supported by EPSRC grant EP/J00826/1. The authors thank Gheorghe Craciun for helpful discussions and suggestions, and Murad Banaji for organizing the workshop ``Combinatorial and algebraic approaches to chemical reaction networks'' at the University of Portsmouth (UK) at which this collaboration began.

\newpage


\section*{Proof of Lemma \ref{endolemma}}

In this section, we prove the equivalence of the two characterizations of endotacticity given in the main text (Definition \ref{endotactic} and Lemma \ref{endolemma}). For completeness, we restate the relevant definitions and results.

\begin{definition}[Definition \ref{endotactic}, main text]
\label{endotactic5}
A chemical reaction network is said to be:
\begin{enumerate}
\item
\textbf{endotactic} if, for every $\mathbf{w} \in \mathbb{R}^m$ and every $R_i \in \mathcal{R}$, $\mathbf{w} \cdot (y_{\rho'(i)} - y_{\rho(i)}) < 0$ implies that there is a $R_j \in \mathcal{R}$ such that $\mathbf{w} \cdot (y_{\rho(j)} - y_{\rho(i)}) < 0$ and $\mathbf{w} \cdot (y_{\rho'(j)} - y_{\rho(j)}) > 0$.
\item
\textbf{lower endotactic} if condition $1.$ holds for every $\mathbf{w} \in \mathbb{R}^m_{\geq 0}$ instead of $\mathbf{w} \in \mathbb{R}^m$.
\item
\textbf{strongly endotactic} if condition $1.$ holds and $R_j \in \mathcal{R}$ can be chosen so that $\mathbf{w} \cdot (y_{\rho(k)} - y_{\rho(j)}) \geq 0$ for all $R_k \in \mathcal{R}$.
\end{enumerate}
\end{definition}

\begin{lemma}[Lemma \ref{endolemma}, main text]
\label{endolemma5}
A chemical reaction network is not endotactic if and only if there is a non-zero vector $\mathbf{w}= (w_1,w_2,\ldots, w_m) \in \mathbb{R}^m$ and a partition of the reaction set $\mathcal{R}$ into three disjoint sets $\mathcal{R}_0$, $\mathcal{R}_-$, and $\mathcal{R}_+$ such that:
\begin{enumerate}
\item[E1.]
$\mathbf{w} \cdot (y_{\rho'(i)} - y_{\rho(i)}) = 0$ for all $R_i \in \mathcal{R}_0$;
\item[E2.]
$\mathbf{w} \cdot (y_{\rho'(i)} - y_{\rho(i)}) < 0$ for all $R_i \in \mathcal{R}_-$;
\item[E3.]
$\mathbf{w} \cdot (y_{\rho(i)} - y_{\rho(j)}) \leq 0$ for all $R_i \in \mathcal{R}_-$ and $R_j \in \mathcal{R}_+$; and
\item[E4.]
$\mathcal{R}_- \not= \O$.
\end{enumerate}
A network is not lower endotactic if and only if there is a non-zero $\mathbf{w} \in \mathbb{R}^m$ and a partition of $\mathcal{R}$ such that E1, E2, E3, and E4 are satisfied as well as:
\begin{enumerate}
\item[LE.]
$\mbox{w}_k \geq 0$ for all $k = 1, \ldots, m$.
\end{enumerate}
A network is not strongly endotactic if and only if there is a non-zero $\mathbf{w} \in \mathbb{R}^m$ and a partition of $\mathcal{R}$ such that E1, E2, and E4 are satisfied as well as:
\begin{enumerate}
\item[SE3(a).]
$\mathcal{R}_0 \not= \O$ and $\mathbf{w} \cdot (y_{\rho(i)} - y_{\rho(j)}) < 0$ for all $R_i \in \mathcal{R}_0$ and $R_j \in \mathcal{R}_- \cup \mathcal{R}_+$; or
\item[SE3(b).]
$\mathcal{R}_0 = \O$ and $\mathbf{w} \cdot (y_{\rho(i)} - y_{\rho(j)}) \leq 0$ for all $R_i \in \mathcal{R}_-$ and $R_j \in \mathcal{R}_+$; and
\end{enumerate}
\end{lemma}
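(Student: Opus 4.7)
The plan is to prove the three equivalences by building partitions of $\mathcal{R}$ from the sign data of $\mathbf{w} \cdot (y_{\rho'(j)} - y_{\rho(j)})$ on each reaction. For the ``if'' direction of the endotactic equivalence, I would pick any $R_{i_0} \in \mathcal{R}_-$ (nonempty by E4) as the witness reaction in Definition \ref{endotactic5}; any candidate $R_j$ with $\mathbf{w} \cdot (y_{\rho'(j)} - y_{\rho(j)}) > 0$ must then lie in $\mathcal{R}_+$ by E1 and E2, but E3 forces $\mathbf{w} \cdot (y_{\rho(j)} - y_{\rho(i_0)}) \geq 0$, ruling out the strict inequality required by endotacticity. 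The lower endotactic case is handled by the same argument after noting that the ambient witness $\mathbf{w}$ lies in $\mathbb{R}^m_{\geq 0}$ by LE. The strongly endotactic ``if'' direction is analogous: under SE3(a) the contradiction to the global-minimality clause $\mathbf{w} \cdot (y_{\rho(k)} - y_{\rho(j)}) \geq 0$ uses any $R_k \in \mathcal{R}_0$ paired with $R_j \in \mathcal{R}_+$; under SE3(b) it uses $R_k = R_{i_0}$ itself.

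For the ``only if'' direction in the endotactic case, I would take a witness $(\mathbf{w}, R_{i_0})$ of non-endotacticity, set $\alpha = \mathbf{w} \cdot y_{\rho(i_0)}$, and define
\[
\mathcal{R}_0 = \{R_j : \mathbf{w} \cdot (y_{\rho'(j)} - y_{\rho(j)}) = 0\}, \quad \mathcal{R}_- = \{R_j : \mathbf{w} \cdot (y_{\rho'(j)} - y_{\rho(j)}) < 0,\ \mathbf{w} \cdot y_{\rho(j)} \leq \alpha\},
\]
with $\mathcal{R}_+$ the complement. Conditions E1, E2, E4 are immediate. For E3, every $R_j \in \mathcal{R}_+$ either points strictly up in $\mathbf{w}$, in which case non-endotacticity at $R_{i_0}$ forces $\mathbf{w} \cdot y_{\rho(j)} \geq \alpha$, or points strictly down but has $\mathbf{w} \cdot y_{\rho(j)} > \alpha$ by construction; either way $\mathbf{w} \cdot y_{\rho(j)} \geq \alpha \geq \mathbf{w} \cdot y_{\rho(i)}$ for every $R_i \in \mathcal{R}_-$. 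The lower endotactic ``only if'' re-uses this partition verbatim, since the given witness $\mathbf{w}$ already satisfies LE.

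The strongly endotactic ``only if'' is the main obstacle, since the partition must encode failure not just of the endotactic condition but also of the auxiliary global-minimality requirement. I would split on whether $y_{\rho(i_0)}$ is a minimizer of $\mathbf{w} \cdot y$ over source complexes. In Case A, $y_{\rho(i_0)}$ is a minimizer: take $\mathcal{R}_0 = \emptyset$, $\mathcal{R}_- = \{R_{i_0}\}$, and SE3(b) follows from minimality. In Case B, $y_{\rho(i_0)}$ is not a minimizer; the failure of strong endotacticity applied at $R_{i_0}$ then forces every reaction from a minimizer source to satisfy $\mathbf{w} \cdot (y_{\rho'(j)} - y_{\rho(j)}) \leq 0$ (otherwise such a reaction would fulfill all three strongly endotactic conclusions at $R_{i_0}$). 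Subdivide further: if some minimizer-sourced reaction has strictly negative $\mathbf{w}$-drop, promote it to a singleton $\mathcal{R}_-$ with $\mathcal{R}_0 = \emptyset$ and verify SE3(b) by minimality; otherwise all such reactions lie at exactly $0$, so place them into $\mathcal{R}_0$, keep $R_{i_0}$ in $\mathcal{R}_-$ (permissible since $y_{\rho(i_0)}$ is not a minimizer), and verify SE3(a) from the strict gap between the minimum level set of $\mathbf{w} \cdot y$ and the remaining source complexes.

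The delicate checks concentrate in Case B: verifying that no reaction with strictly positive $\mathbf{w}$-drop originates from a minimizer (which is precisely the content of the non-strongly-endotactic hypothesis at $R_{i_0}$) and that $R_{i_0}$ itself lands in the correct block of the partition under each subcase. Once these are in place, E1--E4 together with SE3(a) or SE3(b) can be verified directly from the definitions, closing the equivalence.
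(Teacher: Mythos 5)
Your proposal is correct and follows essentially the same strategy as the paper's proof: both directions are handled by constructing explicit partitions of $\mathcal{R}$ from the sign of $\mathbf{w}\cdot(y_{\rho'(j)}-y_{\rho(j)})$ and the relative $\mathbf{w}$-levels of the source complexes, with the converse directions arguing that any candidate $R_j$ with positive drop must land in $\mathcal{R}_+$ and then be blocked by E3, SE3(a), or SE3(b). The only differences are organizational---you anchor $\mathcal{R}_-$ at the witness's source level $\alpha$ and split the strongly endotactic forward direction into explicit cases on whether $y_{\rho(i_0)}$ attains the global minimum of $\mathbf{w}\cdot y_\rho$, whereas the paper defines $\mathcal{R}_-$ via minimality among the negative-drop reactions and derives SE3(a)/SE3(b) by contradiction; both constructions verify the same conditions.
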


\begin{proof}

We will consider the endotactic and strongly endotactic cases separately. The lower endotactic case follows with a trivial modification to the endotactic case.\\

\noindent {\it (Not endotactic $\Longrightarrow$)} Suppose that the network is not endotactic. We construct the sets $\mathcal{R}_-$, $\mathcal{R}_0$, and $\mathcal{R}_+$ in the following way: (1) $\mathcal{R}_-$ consists of those $R_i$ for which $\mathbf{w} \cdot (y_{\rho'(i)} - y_{\rho(i)}) < 0$ and $\mathbf{w} \cdot (y_{\rho(i)} - y_{\rho(j)}) \leq 0$ for all other $R_j \in \mathcal{R}$ such that $\mathbf{w} \cdot (y_{\rho'(j)} - y_{\rho(j)}) < 0$; (2) $\mathcal{R}_0$ consists of those $R_i$ for which $\mathbf{w} \cdot (y_{\rho'(i)} - y_{\rho(i)}) = 0$; and (3) $\mathcal{R}_+$ consists of everything else. We have that E1 and E2 are satisfied by construction and that E4 is satisfied by the assumption that the network is not endotactic. To show that E3 is satisfied, we suppose that there is a pair $R_i \in \mathcal{R}_-$ and $R_j \in \mathcal{R}_+$ such that $\mathbf{w} \cdot (y_{\rho(i)} - y_{\rho(j)}) > 0$. By the construction of $\mathcal{R}_-$, we have $\mathbf{w} \cdot (y_{\rho'(i)} - y_{\rho(i)}) < 0$, and by the construction of $\mathcal{R}_-$ and $\mathcal{R}_0$, it follows that $\mathbf{w} \cdot (y_{\rho'(j)} - y_{\rho(j)}) > 0$. It follows by Definition \ref{endotactic5} that the network is endotactic, which contradicts our assumption. It follows that E3 is satisfied.\\

\noindent {\it (Not endotactic $\Longleftarrow$)} Suppose there is a $\mathbf{w} \in \mathbb{R}^m$ which satisfies E1 - E4. It follows from E2 and E4 that there is a $R_i \in \mathcal{R}$ such that $\mathbf{w} \cdot (y_{\rho'(i)} - y_{\rho(i)} ) < 0$. Furthermore, from E1 and E3 we have that for every $R_j \in \mathcal{R}$, either $\mathbf{w} \cdot (y_{\rho(j)} - y_{\rho(i)}) \geq 0$ (if $R_j \in \mathcal{R}_+$) or $\mathbf{w} \cdot (y_{\rho'(j)} - y_{\rho(j)}) = 0$ (if $R_j \in \mathcal{R}_0 \cup \mathcal{R}_-$). It follows that the network is not endotactic by the contrapositive of Definition \ref{endotactic5}. The cases for lower endotactic networks follows similarly.\\

\noindent {\it (Not strongly endotactic $\Longrightarrow$)} Suppose that the network is not strongly endotactic. We construct the sets $\mathcal{R}_-$, $\mathcal{R}_0$, and $\mathcal{R}_+$ in the following way: (1) $\mathcal{R}_-$ consists of those $R_i$ for which $\mathbf{w} \cdot (y_{\rho'(i)} - y_{\rho(i)}) < 0$ and $\mathbf{w} \cdot (y_{\rho(i)} - y_{\rho(j)}) \leq 0$ for all other $R_j \in \mathcal{R}$ such that $\mathbf{w} \cdot (y_{\rho'(j)} - y_{\rho(j)}) < 0$; (2) $\mathcal{R}_0$ consists of those $R_j$ for which $\mathbf{w} \cdot (y_{\rho'(j)} - y_{\rho(j)}) = 0$ and $\mathbf{w} \cdot (y_{\rho(j)} - y_{\rho(i)}) < 0$ for all $R_i \in \mathcal{R}_-$; and (3) $\mathcal{R}_+$ consists of everything else. The condition E1, E2, and E4 are satisfied by the constructions of $\mathcal{R}_-$ and $\mathcal{R}_0$.

Now suppose that $\mathcal{R}_0 \not= \emptyset$ and there is a pair $R_i \in \mathcal{R}_0$ and $R_j \in \mathcal{R}_- \cup \mathcal{R}_+$ such that $\mathbf{w} \cdot (y_{\rho(i)} - y_{\rho(j)}) \geq 0$. $R_j \in \mathcal{R}_-$ violates the construction of $\mathcal{R}_0$ so that we must have $R_j \in \mathcal{R}_+$. By the construction of $\mathcal{R}_-$ and $\mathcal{R}_0$ we must have $\mathbf{w} \cdot (y_{\rho'(j)} - y_{\rho(j)}) > 0$ and $\mathbf{w} \cdot (y_{\rho(j)}-y_{\rho(i)}) < 0$ so that the network is endotactic by Definition \ref{endotactic5}. Now suppose that $\mathcal{R}_0 = \emptyset$ and there is a pair $R_i \in \mathcal{R}_-$ and $R_j \in \mathcal{R}_+$ such that $\mathbf{w} \cdot (y_{\rho(i)} - y_{\rho(j)}) > 0$. By the construction of $\mathcal{R}_-$ and $\mathcal{R}_0$ (which is empty), it follows that $\mathbf{w} \cdot (y_{\rho'(j)} - y_{\rho(j)}) > 0$ so that the network is endotactic by Definition \ref{endotactic5}. Since these results contradict our assumption, it follows that one of SE3(a) or SE(b) is satisfied, and we are done.\\



\noindent {\it (Not strongly endotactic $\Longleftarrow$)} Suppose there is a $\mathbf{w} \in \mathbb{R}^m$ which satisfies E1, E2, SE3(a), and E4. It follows from E2 and E4 that there is a $R_i \in \mathcal{R}$ such that $\mathbf{w} \cdot (y_{\rho'(i)} - y_{\rho(i)} ) < 0$. It follows from E1 and SE3(a) that there is an $R_i \in \mathcal{R}$ such that $\mathbf{w} \cdot (y_{\rho'(i)} - y_{\rho(i)}) = 0$ and $\mathbf{w} \cdot (y_{\rho(i)} - y_{\rho(j)}) < 0$ for every $R_j \in \mathcal{R}$ such that $\mathbf{w} \cdot (y_{\rho'(i)} - y_{\rho(i)}) < 0$. It follows by Definition \ref{endotactic5} that the network is not strongly endotactic.

Now suppose there is a $\mathbf{w} \in \mathbb{R}^m$ which satisfies E1, E2, SE3(b), and E4. It follows from E2 and E4 that there is a $R_i \in \mathcal{R}$ such that $\mathbf{w} \cdot (y_{\rho'(i)} - y_{\rho(i)} ) < 0$. It follows from E1 and SE3(b) that $\mathbf{w} \cdot (y_{\rho(i)} - y_{\rho(j)} \geq 0$ for every $R_j \in \mathcal{R}$ such that $\mathbf{w} \cdot (y_{\rho'(j)} - y_{\rho(j)}) > 0$ so that the network is not endotactic by Definition \ref{endotactic5} and therefore not strongly endotactic. This completes the cases, so we are done.
\end{proof}

\section*{Proof of Lemma \ref{qssalemma}}

In the main text, we introduced two general phosphorylation / dephosphorylation mechanisms which appear frequently in models of biochemical reaction systems. The general $n$-site distributive phosphorylation network is given by
\begin{equation}
\label{processive1}
\left\{
\begin{array}{c}
\displaystyle{S_0 + E \mathop{\stackrel{k_1}{\begin{array}{c} \vspace{-0.25cm} \longleftarrow \vspace{-0.3cm} \\ \longrightarrow \end{array}}}_{k_2} ES_0 \mathop{\stackrel{k_3}{\begin{array}{c} \vspace{-0.25cm} \longleftarrow \vspace{-0.3cm} \\ \longrightarrow \end{array}}}_{k_4} ES_1 \mathop{\stackrel{k_5}{\begin{array}{c} \vspace{-0.25cm} \longleftarrow \vspace{-0.3cm} \\ \longrightarrow \end{array}}}_{k_6} \cdots \mathop{\stackrel{k_{2n-1}}{\begin{array}{c} \vspace{-0.25cm} \longleftarrow \vspace{-0.3cm} \\ \longrightarrow \end{array}}}_{k_{2n}} ES_{n-1} \; \stackrel{k_{2n+1}}{\longrightarrow} \; S_n + E} \vspace{0.1in} \\
\displaystyle{S_n + F \mathop{\stackrel{\ell_{2n+1}}{\begin{array}{c} \vspace{-0.25cm} \longleftarrow \vspace{-0.3cm} \\ \longrightarrow \end{array}}}_{\ell_{2n}} FS_n \mathop{\stackrel{\ell_{2n-1}}{\begin{array}{c} \vspace{-0.25cm} \longleftarrow \vspace{-0.3cm} \\ \longrightarrow \end{array}}}_{\ell_{2n-2}} \cdots \mathop{\stackrel{\ell_5}{\begin{array}{c} \vspace{-0.25cm} \longleftarrow \vspace{-0.3cm} \\ \longrightarrow \end{array}}}_{\ell_4} FS_2 \mathop{\stackrel{\ell_3}{\begin{array}{c} \vspace{-0.25cm} \longleftarrow \vspace{-0.3cm} \\ \longrightarrow \end{array}}}_{\ell_2} FS_1 \; \stackrel{\ell_1}{\longrightarrow} \; S_0 + F}
\end{array}
\right.
\end{equation}
while the $n$-site distributive phosphorylation network is given by
\begin{equation}
\label{distributive1}
\left\{
\begin{split}
& \displaystyle{S_0 + E \mathop{\stackrel{k_1}{\begin{array}{c} \vspace{-0.25cm} \longleftarrow \vspace{-0.3cm} \\ \longrightarrow \end{array}}}_{k_2} S_0E \; \stackrel{k_3}{\longrightarrow} S_1 + E \mathop{\stackrel{k_4}{\begin{array}{c} \vspace{-0.25cm} \longleftarrow \vspace{-0.3cm} \\ \longrightarrow \end{array}}}_{k_5} S_1E \; \stackrel{k_6}{\longrightarrow} \; \; \; \cdots \; \; \; \stackrel{k_{3n-3}}{\longrightarrow} S_{n-1} + E \mathop{\stackrel{k_{3n-2}}{\begin{array}{c} \vspace{-0.25cm} \longleftarrow \vspace{-0.3cm} \\ \longrightarrow \end{array}}}_{k_{3n-1}} S_{n-1}E\stackrel{k_{3n}}{\longrightarrow} S_n + E} \\
&\displaystyle{S_n + F \mathop{\stackrel{\ell_{3n-2}}{\begin{array}{c} \vspace{-0.25cm} \longleftarrow \vspace{-0.3cm} \\ \longrightarrow \end{array}}}_{\ell_{3n-1}} S_nF \; \stackrel{\ell_{3n}}{\longrightarrow} S_{n-1} + F \mathop{\stackrel{\ell_{3n-5}}{\begin{array}{c} \vspace{-0.25cm} \longleftarrow \vspace{-0.3cm} \\ \longrightarrow \end{array}}}_{\ell_{3n-4}} S_{n-1}F \; \stackrel{\ell_{3n-3}}{\longrightarrow}\; \; \; \cdots \; \; \; \stackrel{\ell_6}{\longrightarrow} S_{1} + F \mathop{\stackrel{\ell_1}{\begin{array}{c} \vspace{-0.25cm} \longleftarrow \vspace{-0.3cm} \\ \longrightarrow \end{array}}}_{\ell_2} S_1F \; \stackrel{\ell_3}{\longrightarrow} S_0 + F}
\end{split}
\right.
\end{equation}

In this section, we derive the general form of the standard quasi-steady state approximation (QSSA) of (\ref{processive1}) and (\ref{distributive1}). Under the QSSA, we assume that the intermediate compounds (i.e. the species $S_iE$ and $S_iF$) equilibriate on a sufficiently short time-scale that their concentrations may be assumed to be constant. To accommodation this assumption, we set their derivatives equal to zero, so that we have
\begin{equation}
\label{qssa}
\begin{split}
\dot{[S_iE]} & = 0, \; \; \; i=0, \ldots, n-1 \\
\dot{[S_iF]} & = 0, \; \; \; i=1, \ldots, n.
\end{split}
\end{equation}
We may use (\ref{qssa}) in conjunction with the conservation laws
\begin{equation}
\label{conservation}
E_T = [E] + \sum_{i=0}^{n-1} [ES_i], \; \; \; \mbox{ and } \; \; \; F_T = [F] + \sum_{i=1}^n [FS_i]
\end{equation}
to determine algebraic expressions for $[S_iE]$ and $[S_iF]$. The rate of each phosphorylation or dephosphorylation step in (\ref{processive1}) or (\ref{distributive1}) may then be simplified to a single step which depends upon the equilibriated value of the corresponding step's intermediate compound.

The form of the QSSA rates for (\ref{processive1}) and (\ref{distributive1}) are well-known for the cases $n=0$ (one-step chain) and $n=1$ (two-step chain). To the best of the authors' knowledge, the derivation contained here is the first to determine the QSSA rate form for the general $n$-site chains (\ref{processive1}) and (\ref{distributive1}). We have the following results which we prove by direct substitution. The forms may be derived directly by induction.

\begin{lemma}
\label{lemma5}
The processive phosphorylation chain
\begin{equation}
\label{processive5}
\displaystyle{S_0 + E \mathop{\stackrel{k_1}{\begin{array}{c} \vspace{-0.25cm} \longleftarrow \vspace{-0.3cm} \\ \longrightarrow \end{array}}}_{k_2} ES_0 \mathop{\stackrel{k_3}{\begin{array}{c} \vspace{-0.25cm} \longleftarrow \vspace{-0.3cm} \\ \longrightarrow \end{array}}}_{k_4} ES_1 \mathop{\stackrel{k_5}{\begin{array}{c} \vspace{-0.25cm} \longleftarrow \vspace{-0.3cm} \\ \longrightarrow \end{array}}}_{k_6} \cdots \mathop{\stackrel{k_{2n-1}}{\begin{array}{c} \vspace{-0.25cm} \longleftarrow \vspace{-0.3cm} \\ \longrightarrow \end{array}}}_{k_{2n}} ES_{n-1} \; \stackrel{k_{2n+1}}{\longrightarrow} \; S_n + E}
\end{equation}
has the QSSA solution
\begin{equation}
\label{processivesolution}
[ES_{i}] = \displaystyle{\frac{\left( \prod_{j=0}^iK_j \right) E_T [S_0]}{1 + \left( \sum_{k=0}^{n-1} \prod_{j=0}^{k-1} K_j\right) [S_0]}}, \; \; \; i=0, \ldots, n-1
\end{equation}
where $K_i = k_{2i+1}/(k_{2i+2}+k_{2i+3})$, $i=0, \ldots, n-1$.
\end{lemma}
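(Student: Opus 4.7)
The plan is to verify the proposed closed form by direct substitution into the QSSA equations $\dot{[ES_i]}=0$ for $i=0,\ldots,n-1$, together with the enzyme conservation $E_T=[E]+\sum_{i=0}^{n-1}[ES_i]$. The essential observation is that the proposed formula is equivalent to two simpler statements: first, the ratio of successive intermediates satisfies $[ES_i]/[ES_{i-1}]=K_i$ for $i=1,\ldots,n-1$; and second, the boundary relation $[ES_0]=K_0[E][S_0]$ holds.

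I would first verify the ratios $[ES_i]=K_i[ES_{i-1}]$ by downward induction on $i$, starting from the terminal QSSA equation at $i=n-1$. Since $ES_{n-1}$ has no downstream neighbour, its equation reduces immediately to $k_{2n-1}[ES_{n-2}]=(k_{2n}+k_{2n+1})[ES_{n-1}]$, which gives $[ES_{n-1}]=K_{n-1}[ES_{n-2}]$. For the inductive step at an interior index $i$, I would substitute the hypothesis $[ES_{i+1}]=K_{i+1}[ES_i]$ into the tridiagonal QSSA equation $k_{2i+1}[ES_{i-1}]+k_{2i+4}[ES_{i+1}]=(k_{2i+2}+k_{2i+3})[ES_i]$ and isolate the desired ratio. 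The boundary QSSA equation at $i=0$ then identifies $[ES_0]$ in terms of $[E][S_0]$.

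Once the recursion $[ES_i]=\bigl(\prod_{j=0}^i K_j\bigr)[E][S_0]$ is established, I would substitute into the conservation law to obtain $E_T=[E]\bigl(1+[S_0]\sum_{i=0}^{n-1}\prod_{j=0}^i K_j\bigr)$, solve for $[E]$, and read off the stated closed form (with the inner product in the denominator appropriately reindexed to agree with the lemma's notation). The conservation law check is then a single algebraic rearrangement.

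The principal obstacle is the inductive step through the interior of the chain. Each interior QSSA equation is tridiagonal and couples $[ES_i]$ to both of its neighbours, so the cancellation needed to reduce the full three-term balance to the clean ratio $K_i=k_{2i+1}/(k_{2i+2}+k_{2i+3})$ requires careful accounting of the reverse flux contribution from $[ES_{i+1}]$. Once this recursive step is handled, the remaining arguments (iterating the recursion into a telescoping product, substituting into the conservation law, and rearranging the denominator) are purely algebraic.
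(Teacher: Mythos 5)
There is a genuine gap, and it sits precisely at the point you flag as the ``principal obstacle.'' Your interior balance $k_{2i+1}[ES_{i-1}]+k_{2i+4}[ES_{i+1}]=(k_{2i+2}+k_{2i+3})[ES_i]$ is the honest mass-action steady-state equation for $ES_i$, but under it the claimed ratios do \emph{not} hold. Substituting the inductive hypothesis $[ES_{i+1}]=K_{i+1}[ES_i]$ and isolating the ratio gives
\[
[ES_i]=\frac{k_{2i+1}}{\,k_{2i+2}+k_{2i+3}-k_{2i+4}K_{i+1}\,}\,[ES_{i-1}],
\]
not $K_i[ES_{i-1}]$: the reverse-flux contribution does not cancel. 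The downward induction therefore produces continued-fraction effective constants $\widetilde K_{n-1}=K_{n-1}$, $\widetilde K_i=k_{2i+1}/(k_{2i+2}+k_{2i+3}-k_{2i+4}\widetilde K_{i+1})$, and the lemma's formula with $K_i=k_{2i+1}/(k_{2i+2}+k_{2i+3})$ is simply not the solution of the tridiagonal system (already for a three-intermediate chain the two disagree). So the ``careful accounting'' you defer cannot be supplied; the step fails rather than being merely delicate.

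The paper's own proof avoids this issue by verifying the closed form against two-term balances, $\dot{[ES_i]}=k_{2i+1}[ES_{i-1}]-(k_{2i+2}+k_{2i+3})[ES_i]=0$, i.e.\ it drops the backward fluxes $k_4[ES_1],\ldots,k_{2i+4}[ES_{i+1}]$ from the first and every interior equation. With those simplified equations the ratios $[ES_i]=K_i[ES_{i-1}]$ and the boundary relation $[ES_0]=K_0[E][S_0]$ are immediate, and the remainder is exactly the algebra you describe (solve the conservation law for $[E]$ and back-substitute). If you adopt the same simplified system, your argument goes through and is essentially the paper's direct-substitution proof recast as a short induction; if you insist on the full tridiagonal system, the constants in the statement must be replaced by the $\widetilde K_i$ above (the Michaelis--Menten functional form $V^+[S_0]/(K^++[S_0])$ used downstream survives either way). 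Your parenthetical about reindexing the denominator is also on target: consistency with the numerator and the conservation law requires $1+\bigl(\sum_{k=0}^{n-1}\prod_{j=0}^{k}K_j\bigr)[S_0]$, as in the displayed computation of the paper's proof, rather than the $\prod_{j=0}^{k-1}K_j$ printed in the lemma.
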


\begin{proof}
We need to solve the system of equations
\begin{eqnarray}
\label{11}
\dot{[S_0E]} & = & k_1[S_0][E] - (k_2+k_3)[ES_0] = 0\\
\label{12}
\dot{[S_iE]} & = & k_{2i+1}[ES_{i-1}] - (k_{2i+2}+k_{2i+3})[ES_i] = 0, \; \; \; i=1, \ldots, n-1\\
\label{13}
E_T & = & [E] + \sum_{i=0}^{n-1} [ES_i].
\end{eqnarray}
We substitute (\ref{13}) into (\ref{11}) and use the substitution $K_i = k_{2i+1}/(k_{2i+2}+k_{2i+3})$ for $i=0, \ldots, n-1$, to get the new system of equations
\begin{eqnarray}
\label{14}
& & K_0[S_0]\left(E_T - \sum_{i=0}^{n-1} [ES_i] \right) - [ES_0] = 0\\
\label{15}
& & K_{i}[ES_{i-1}] - [ES_i] = 0, \; \; \; i=1, \ldots, n-1
\end{eqnarray}
Substituting (\ref{processivesolution}) into the left-hand size of (\ref{14}) gives
\footnotesize
\[\begin{split}
& K_0[S_0] \left( E_T - \sum_{i=0}^{n-1} \left( \frac{\left(\prod_{j=0}^{i} K_{j} \right) E_T[S_0]}{1 + \left( \sum_{k=0}^{n-1} \prod_{j=0}^{k} K_j \right) [S_0]}\right) \right)- \frac{K_0 E_T [S_0]}{1 + \left( \sum_{k=0}^{n-1} \prod_{j=0}^{k} K_j \right) [S_0]}\\
& \; \; \; = K_0 E_T [S_0] \left( \frac{1 + \left( \sum_{k=0}^{n-1} \prod_{j=0}^{k} K_j \right) [S_0] -  \left(\sum_{i=0}^{n-1} \prod_{j=0}^{i} K_{j+1} \right) [S_0]}{1 + \left( \sum_{k=0}^{n-1} \prod_{j=0}^{k} K_j \right) [S_0]}\right)- \frac{K_0 E_T [S_0]}{1 + \left( \sum_{k=0}^{n-1} \prod_{j=0}^{k} K_j \right) [S_0]}\\
& \; \; \; = \frac{K_0 E_T [S_0]}{1 + \left( \sum_{k=0}^{n-1} \prod_{j=0}^{k} K_j \right) [S_0]} - \frac{K_0 E_T [S_0]}{1 + \left( \sum_{k=0}^{n-1} \prod_{j=0}^k K_j \right) [S_0]} = 0.
\end{split}\]
\small
Substituting (\ref{processivesolution}) into the left-hand side of (\ref{15}) gives
\footnotesize
\[K_{i} \displaystyle{\frac{\left( \prod_{j=0}^{i-1}K_j \right) E_T [S_0]}{1 + \left( \sum_{k=0}^{n-1} \prod_{j=0}^k K_j\right) [S_0]}} - \displaystyle{\frac{\left( \prod_{j=1}^{i}K_j \right) E_T [S_0]}{1 + \left( \sum_{k=0}^{n-1} \prod_{j=0}^k K_{j}\right) [S_0]}} = 0\]
\small
and we are done.

\end{proof}

\begin{lemma}
\label{lemma6}
The distributive phosphorylation chain
\begin{equation}
\label{distributive5}
\displaystyle{S_0 + E \mathop{\stackrel{k_1}{\begin{array}{c} \vspace{-0.25cm} \longleftarrow \vspace{-0.3cm} \\ \longrightarrow \end{array}}}_{k_2} S_0E \; \stackrel{k_3}{\longrightarrow} S_1 + E \mathop{\stackrel{k_4}{\begin{array}{c} \vspace{-0.25cm} \longleftarrow \vspace{-0.3cm} \\ \longrightarrow \end{array}}}_{k_5} S_1E \; \stackrel{k_6}{\longrightarrow} \; \; \; \cdots \; \; \; \stackrel{k_{3n-3}}{\longrightarrow} S_{n-1} + E \mathop{\stackrel{k_{3n-2}}{\begin{array}{c} \vspace{-0.25cm} \longleftarrow \vspace{-0.3cm} \\ \longrightarrow \end{array}}}_{k{3n-1}} S_{n-1}E\stackrel{k_{3n}}{\longrightarrow} S_n + E}
\end{equation}
has the QSSA solution
\begin{equation}
\label{distributivesolution}
[ES_{i}] = \displaystyle{\frac{K_{i}E_T[S_{i}]}{1 + \sum_{j=0}^{n-1} K_j[S_{j}]}}, \; \; \; i=0,\ldots, n-1
\end{equation}
where $K_i = k_{3i+1}/(k_{3i+2}+k_{3i+3})$ for $i=0, \ldots, n-1$.
\end{lemma}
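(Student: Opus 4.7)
The plan is to follow the same direct-substitution strategy used for the processive chain in Lemma \ref{lemma5}, but exploit the fact that the distributive mechanism decouples the intermediates from one another. In the distributive case, each intermediate $S_iE$ is formed only from $S_i+E$ (with rate constant $k_{3i+1}$) and is consumed either by dissociation back to $S_i+E$ (rate $k_{3i+2}$) or by release of $S_{i+1}+E$ (rate $k_{3i+3}$). Unlike the processive chain, there is no incoming flux from $S_{i-1}E$, so the QSSA equation for $S_iE$ is the simple balance
\begin{equation*}
\dot{[S_iE]} = k_{3i+1}[S_i][E] - (k_{3i+2}+k_{3i+3})[S_iE] = 0, \qquad i=0,\ldots,n-1.
\end{equation*}

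First I would solve each of these equations algebraically to obtain $[S_iE] = K_i [S_i][E]$, with $K_i = k_{3i+1}/(k_{3i+2}+k_{3i+3})$ as defined in the statement. Next I would insert these expressions into the enzyme conservation law
\begin{equation*}
E_T = [E] + \sum_{i=0}^{n-1} [S_iE] = [E]\Bigl(1 + \sum_{j=0}^{n-1} K_j [S_j]\Bigr),
\end{equation*}
and solve for $[E]$, yielding $[E] = E_T\bigl/\bigl(1 + \sum_{j=0}^{n-1} K_j [S_j]\bigr)$. Substituting back into $[S_iE] = K_i[S_i][E]$ gives the claimed formula (\ref{distributivesolution}) immediately.

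To present the argument in the same ``verification'' style as the proof of Lemma \ref{lemma5}, I would state the candidate solution (\ref{distributivesolution}) and then check both (a) that it satisfies each QSSA equation $\dot{[S_iE]}=0$, which is a one-line cancellation of $K_i E_T[S_i]/(1+\sum_j K_j[S_j])$, and (b) that it satisfies the conservation law, which reduces to the identity
\begin{equation*}
\frac{E_T}{1+\sum_{j=0}^{n-1}K_j[S_j]} + \sum_{i=0}^{n-1}\frac{K_i E_T[S_i]}{1+\sum_{j=0}^{n-1}K_j[S_j]} = E_T.
\end{equation*}
There is no real obstacle here; the distributive structure makes the linear system block-diagonal in the intermediates (each $[S_iE]$ couples only to $[E]$), so no nested products of $K_j$'s appear and no induction is needed, in contrast with the processive proof. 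The symmetric dephosphorylation chain is handled identically by relabeling.
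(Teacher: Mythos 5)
Your proposal is correct and follows essentially the same route as the paper: the paper writes down the same QSSA balance equations $k_{3i+1}[S_i][E]-(k_{3i+2}+k_{3i+3})[ES_i]=0$ together with the enzyme conservation law and verifies the claimed formula by direct substitution, which is just the verification-style presentation of the constructive solve-for-$[E]$-and-back-substitute computation you describe. Your observation that the distributive structure decouples the intermediates (so no nested products of the $K_j$ and no induction are needed, unlike the processive case) matches exactly what the paper's algebra exploits.
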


\begin{proof}
We need to solve the system of equations
\begin{eqnarray}
\label{19}
\dot{[S_iE]}& = & k_{3i+1} [S_i][E] - (k_{3i+2} + k_{3i+3})[ES_i] = 0, \; \; \; i=0, \ldots, n-1 \\
\label{16}
E_T & = & [E] + \sum_{i=0}^{n-1} [ES_i].
\end{eqnarray}
We substitute (\ref{16}) into (\ref{19}) and use the substitution $K_i = k_{3i+1}/(k_{3i+2}+k_{3i+3})$ to obtain the new system of equations
\begin{equation}
\label{10}
K_{i} [S_i] \left( E_T - \sum_{i=0}^{n-1} [ES_i]\right) - [ES_i] = 0, \; \; \; i=0, \ldots, n-1.
\end{equation}
Substituting (\ref{distributivesolution}) into the left-hand side of (\ref{10}) gives
\footnotesize
\[\begin{split}
& K_{i} [S_i] \left( E_T - \sum_{i=0}^{n-1} \frac{K_{i}E_T[S_{i}]}{1 + \sum_{j=0}^{n-1} K_j[S_{j}]}\right) - \frac{K_{i}E_T[S_{i}]}{1 + \sum_{j=0}^{n-1} K_j[S_{j}]}\\
& \; \; \; = K_i E_T [S_i] \left( \frac{1 - \sum_{j=0}^{n-1} K_j[S_{j}] + \sum_{i=0}^{n-1} K_i[S_{i}]}{1 + \sum_{j=0}^{n-1} K_j[S_{j}]} \right) - \frac{K_{i}E_T[S_{i}]}{1 + \sum_{j=0}^{n-1} K_j[S_{j}]}\\
& \; \; \; = \frac{K_{i}E_T[S_{i}]}{1 + \sum_{j=0}^{n-1} K_j[S_{j}]} - \frac{K_{i}E_T[S_{i}]}{1 + \sum_{j=0}^{n-1} K_j[S_{j}]} = 0
\end{split}\]
\small
and we are done.
\end{proof}

We now prove the following result, which was Lemma \ref{qssalemma} in the main text. We note that the analysis contained in the main text does not depend on the exact form of the constants but that they may be determined directly from Lemma \ref{lemma5} and Lemma \ref{lemma6}.

\begin{lemma}[Lemma \ref{qssalemma}, main text]
\label{lemma}
The $n$-site processive phosphorylation / dephosphorylation network (\ref{processive5}) may be modeled under the QSSA by the modified network
\begin{equation}
\label{qssaprocessive}
S_0 \mathop{\stackrel{\nu^+(S_0)}{\begin{array}{c} \vspace{-0.25cm} \longleftarrow \vspace{-0.3cm} \\ \longrightarrow \end{array}}}_{\nu^-(S_n)} S_n
\end{equation}
where
\begin{equation}
\label{kinetics1}
\nu^+(S_0) = \frac{V^+ [S_0]}{K^+ + [S_0]} \; \; \; \mbox{ and } \; \; \; \nu^-(S_n) = \frac{V^- [S_n]}{K^- + [S_n]}
\end{equation}
where $V^+, V^-, K^+, K^- > 0$ are constants which depend upon the original rate constants. The $n$-site distributive phosphorylation / dephosphorylation network (\ref{distributive5}) may be modeled under the QSSA by the modified network
\begin{equation}
\label{qssadistributive}
S_0 \mathop{\stackrel{\nu_1^+}{\begin{array}{c} \vspace{-0.25cm} \longleftarrow \vspace{-0.3cm} \\ \longrightarrow \end{array}}}_{\nu_1^-} S_1 \mathop{\stackrel{\nu_2^+}{\begin{array}{c} \vspace{-0.25cm} \longleftarrow \vspace{-0.3cm} \\ \longrightarrow \end{array}}}_{\nu_2^-} \cdots \mathop{\stackrel{\nu_n^+}{\begin{array}{c} \vspace{-0.25cm} \longleftarrow \vspace{-0.3cm} \\ \longrightarrow \end{array}}}_{\nu_n^-} S_n
\end{equation}
where
\begin{equation}
\label{kinetics2}
\begin{split}
\nu_i^+ & = \frac{V_i^+ [S_i]}{1 + \sum_{j=0}^{n-1} K_{ij}^+ [S_j]}, \; \; \; i=0, \ldots, n-1 \\
\nu_i^- & = \frac{V_i^- [S_i]}{1 + \sum_{j=1}^n K_{ij}^- [S_j]}, \; \; \; i=1, \ldots, n,
\end{split}
\end{equation}
where $V_i^+, K_{ij}^+ > 0$, $i, j=0, \ldots, n-1$, and $V_i^-, K_{ij}^- > 0$, $i,j=1,\ldots, n$, are constants which depend upon the original rate constants.
\end{lemma}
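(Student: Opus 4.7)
\medskip

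\noindent\textbf{Proof proposal.} The plan is to apply the standard quasi-steady state approximation (QSSA) to the $n$-site processive network (\ref{processive5}) and the $n$-site distributive network (\ref{distributive5}) separately, treating each half-chain (the $E$-mediated phosphorylation cascade and the $F$-mediated dephosphorylation cascade) independently, since the two halves share no intermediates and the QSSA decouples them. In both cases the strategy is identical: set the time derivatives of the substrate-enzyme compounds to zero, use the enzyme conservation law (\ref{conservation}) to eliminate the free enzyme concentration $[E]$ (resp.\ $[F]$), solve the resulting linear algebraic system for the intermediate concentrations $[ES_i]$ (resp.\ $[FS_i]$) in terms of the $[S_j]$'s, and then substitute into the expressions for the net formation rates of the fully-bound/unbound substrates $S_i$.

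For the processive chain, I would first observe that the QSSA equations have a triangular structure: the equation for $\dot{[ES_i]}=0$ (for $i\ge 1$) expresses $[ES_i]$ as a constant multiple $K_i$ of $[ES_{i-1}]$, where $K_i=k_{2i+1}/(k_{2i+2}+k_{2i+3})$. This immediately cascades to give $[ES_i]=\bigl(\prod_{j=0}^{i}K_j\bigr)[S_0][E]$, and then the enzyme conservation law determines $[E]$ as a rational function of $[S_0]$, yielding formula (\ref{processivesolution}). The net flux into $S_n$ is $k_{2n+1}[ES_{n-1}]$, which by (\ref{processivesolution}) has exactly the Michaelis-Menten form (\ref{kinetics1}) with explicit $V^+,K^+$. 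The dephosphorylation half is structurally identical after relabeling, producing $\nu^-(S_n)$. The verification that (\ref{processivesolution}) solves the QSSA system can be done by direct substitution into (\ref{11})--(\ref{13}), as performed in Lemma \ref{lemma5}.

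For the distributive chain, the QSSA equations decouple in a different way: each $\dot{[S_iE]}=0$ gives $[ES_i]=K_i[S_i][E]$ with $K_i=k_{3i+1}/(k_{3i+2}+k_{3i+3})$, independently of the other intermediates. Substituting into the enzyme conservation law produces a single linear equation for $[E]$ that yields $[E]=E_T/\bigl(1+\sum_{j=0}^{n-1}K_j[S_j]\bigr)$, and hence formula (\ref{distributivesolution}). The effective rate of the step $S_i\to S_{i+1}$ is $k_{3i+3}[ES_i]$, which matches the form $\nu_i^+$ in (\ref{kinetics2}) with $V_i^+=k_{3i+3}K_iE_T$ and $K_{ij}^+=K_j$. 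The dephosphorylation half is handled symmetrically to produce $\nu_i^-$. Again, direct substitution (Lemma \ref{lemma6}) confirms that (\ref{distributivesolution}) solves the QSSA system.

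The main obstacle is purely bookkeeping: tracking indices and the rational-function algebra when establishing the general $n$-site formulas (\ref{processivesolution}) and (\ref{distributivesolution}). One could derive these by induction on $n$, but the cleanest route is to guess the closed form (which is standard for $n=1$) and verify by substitution; this avoids the tedium of telescoping sums. Since the main text only requires the \emph{functional form} of the resulting rates (Michaelis--Menten type with positive constants), not the explicit values of $V^\pm, K^\pm, V_i^\pm, K_{ij}^\pm$, the verification step is sufficient and the identification of the constants follows as an immediate byproduct. No further analytic subtlety is required.
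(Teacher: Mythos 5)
Your proposal is correct and follows essentially the same route as the paper: decouple the phosphorylation and dephosphorylation half-chains, solve the QSSA algebraic system together with the enzyme conservation law to obtain closed-form expressions for the intermediates (the paper states these as Lemmas \ref{lemma5} and \ref{lemma6} and verifies them by direct substitution, exactly as you suggest), and then read off the effective Michaelis--Menten-type rates by rearranging constants in the denominator.
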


\begin{proof}
The form (\ref{qssaprocessive}) follows by applying Lemma \ref{lemma5} independently to the phosphorylation and dephosphorylation chains in (\ref{processive5}), then rearranging the constants in the denominator. The form (\ref{qssadistributive}) follows similarly by applying Lemma \ref{lemma6} to the chains in (\ref{distributive5}), and we are done.
\end{proof}

\section*{Proof of Lemma \ref{correspondence}}

The following is the full model for the PER {\it (period)} and TIM {\it (timeless)} protein system in {\it Drosophila} presented by Leloup and Goldbeter in \cite{L-G}:

\footnotesize
\begin{eqnarray}
\label{pertim}
\frac{dM_P}{dt} & = & \nu_{sP} \frac{K_{IP}^n}{K_{IP}^n + C_N^n} - \nu_{mP} \frac{M_P}{K_{mP}+M_P} - k_d M_P \\
\label{p0}
\frac{dP_0}{dt} & = & k_{sP} M_P - V_{1P} \frac{P_0}{K_{1P}+P_0} + V_{2P} \frac{P_1}{K_{2P}+P_1}-k_d P_0 \\
\frac{dP_1}{dt} & = & V_{1P} \frac{P_0}{K_{1P}+P_0} - V_{2P} \frac{P_1}{K_{2P}+P_1} - V_{3P} \frac{P_1}{K_{3P}+P_1} + V_{4P} \frac{P_2}{K_{4P}+P_2} - k_d P_1\\
\label{p2}
\frac{dP_2}{dt} & = & V_{3P} \frac{P_1}{K_{3P}+P_1} - V_{4P} \frac{P_2}{K_{4P}+P_2} -k_3P_2T_2 + k_4C - \nu_{dP} \frac{P_2}{K_{dP}+P_2}-k_d P_2\\
\frac{dM_T}{dt} & = & \nu_{sT} \frac{K_{IT}^n}{K_{IT}^n + C_N^n} - \nu_{mT} \frac{M_T}{K_{mT}+M_T} - k_d M_T\\
\label{t0}
\frac{dT_0}{dt} & = & k_{sT}M_T - V_{1T} \frac{T_0}{K_{1T}+T_0} + V_{2T} \frac{T_1}{K_{2T} + T_1}-k_dT_0 \\
\frac{dT_1}{dt} & = & V_{1T} \frac{T_0}{K_{1T}+T_0} - V_{2T} \frac{T_1}{K_{2T} + T_1} - V_{3T} \frac{T_1}{K_{3T} + T_1} + V_{4T} \frac{T_2}{K_{4T} + T_2} - k_dT_1 \\
\label{t2}
\frac{dT_2}{dt} & = & V_{3T} \frac{T_1}{K_{3T} + T_1} - V_{4T} \frac{T_2}{K_{4T}+T_2} - k_3P_2T_2 + k_4 C - \nu_{dT} \frac{T_2}{k_{dT}+T_2} - k_d T_2 \\
\frac{dC}{dt} & = & k_3 P_2 T_2 -k_4C - k_1C + k_2C_N -k_{dC}C \\
\label{pertim-end}
\frac{dC_n}{dt} & = & k_1C -k_2 C_N -k_{dN}C_N.
\end{eqnarray}
\small

We now prove the following result, which is equivalent to Lemma 5.4 of the main text.

\begin{lemma}[Lemma \ref{correspondence}, main text]
\label{correspondence5}
The system of equations (\ref{pertim}) - (\ref{pertim-end}) can be represented by the following network with $\kappa$-variable mass action kinetics:
\[
\mbox{{\it \textbf{(Clock reduced)}}} \; \; \left\{ \begin{array}{l} \\ P_2 + T_2 \mathop{\begin{array}{c} \vspace{-0.25cm} \longleftarrow \vspace{-0.3cm} \\ \longrightarrow \end{array}} C \mathop{\begin{array}{c} \vspace{-0.25cm} \longleftarrow \vspace{-0.3cm} \\ \longrightarrow \end{array}} C_N \; \longrightarrow \; 0 \\ \\ \end{array} \hspace{-0.075in} \begin{array}{c} \\[-0.18in] \nearrow \\[0.1in] \searrow \end{array} \hspace{-0.075in} \begin{array}{l} P_0 \mathop{\begin{array}{c} \vspace{-0.25cm} \longleftarrow \vspace{-0.3cm} \\ \longrightarrow \end{array}} P_1 \mathop{\begin{array}{c} \vspace{-0.25cm} \longleftarrow \vspace{-0.3cm} \\ \longrightarrow \end{array}} P_2 \\[0.2in] \\ T_0 \mathop{\begin{array}{c} \vspace{-0.25cm} \longleftarrow \vspace{-0.3cm} \\ \longrightarrow \end{array}} T_1 \mathop{\begin{array}{c} \vspace{-0.25cm} \longleftarrow \vspace{-0.3cm} \\ \longrightarrow \end{array}} T_2 \end{array} \right.
\]
where each species also undergoes linear degradation (i.e. $P_i \to 0$, $T_i \to 0$, $C_i \to 0$).
\end{lemma}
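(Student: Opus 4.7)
The plan is to rewrite each right-hand side term of (\ref{pertim})--(\ref{pertim-end}) in the form $\kappa_k(t)\,\mathbf{x}(t)^{y_{\rho(k)}}$ for some reaction $y_{\rho(k)} \to y_{\rho'(k)}$ in the reduced clock network, with $\kappa_k(t)$ confined to a compact interval $[\eta,1/\eta]$ bounded away from zero. The assignment of terms to reactions is natural: each mass-action contribution ($k_3 P_2 T_2$, $k_1 C$, $k_2 C_N$, $k_4 C$, and every linear degradation $k_d X$, $k_{dC} C$, $k_{dN} C_N$) corresponds to a reaction with constant $\kappa_k$; each Michaelis--Menten factor $V X/(K+X)$ is realized as a reaction with source complex $X$ and variable rate $\kappa_k(t) = V/(K+X(t))$; and, since $M_P, M_T$ are absent from the reduced network, their contributions $k_{sP} M_P$ and $k_{sT} M_T$ to the equations for $P_0$ and $T_0$ are absorbed into the influx reactions $0 \to P_0$ and $0 \to T_0$ by setting $\kappa_k(t) = k_{sP} M_P(t)$ and $\kappa_k(t) = k_{sT} M_T(t)$.

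The substantive task is to prove that every $\kappa_k(t)$ constructed above is bounded both above and below by positive constants. For the Michaelis--Menten $\kappa_k(t) = V/(K+X(t))$ the upper bound is immediate, and the lower bound follows from a uniform upper bound on $X(t)$; for the influx rates, we need uniform upper and lower bounds on $M_P(t)$ and $M_T(t)$. Both needs reduce to establishing uniform bounds on all species concentrations, which I would do by a cascading argument exploiting the linear degradation $k_d X$ present in each of (\ref{pertim})--(\ref{t2}). First, the differential inequality $dM_P/dt \leq \nu_{sP} - k_d M_P$, and its analogue for $M_T$, immediately give upper bounds on $M_P$ and $M_T$. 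Plugging these into the equations for $P_0$ and $T_0$, whose production terms are now all bounded, yields upper bounds on $P_0, T_0$ by the same linear-decay comparison; iterating through $P_1, P_2, T_1, T_2$ produces uniform upper bounds for each. Finally, the $(C, C_N)$ subsystem is linear in $(C, C_N)$ with bounded input $k_3 P_2 T_2$ and a coefficient matrix whose eigenvalues have strictly negative real part (since $k_{dC}, k_{dN} > 0$), giving the remaining uniform upper bounds.

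The main obstacle is the lower bound on $M_P$ and $M_T$, since without it the influx rates could approach zero. Using the upper bound $C_N(t) \leq C_N^{\max}$ from the previous step, the Hill production term satisfies $\nu_{sP} K_{IP}^n/(K_{IP}^n + C_N^n) \geq c := \nu_{sP} K_{IP}^n/(K_{IP}^n + (C_N^{\max})^n) > 0$. Combining this with the elementary bound $\nu_{mP} M_P/(K_{mP}+M_P) \leq (\nu_{mP}/K_{mP}) M_P$ gives $dM_P/dt \geq c - \alpha M_P$ with $\alpha := k_d + \nu_{mP}/K_{mP} > 0$, from which a standard ODE comparison yields $M_P(t) \geq \min\{M_P(0), c/\alpha\} > 0$ for all $t \geq 0$, and the same reasoning applies to $M_T$. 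With every $\kappa_k(t)$ now bounded in a compact interval bounded away from zero, choosing $\eta > 0$ smaller than each lower bound and smaller than the reciprocal of each upper bound yields the required $\kappa$-variable mass action representation of (\ref{pertim})--(\ref{pertim-end}) on the reduced clock network.
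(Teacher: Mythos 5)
Your overall strategy coincides with the paper's: establish uniform upper bounds on all species, use the resulting bound on $C_N$ to bound $M_P$ and $M_T$ below, and then absorb the Michaelis--Menten denominators and the $M_P$, $M_T$ factors into time-varying rates confined to compact intervals. The term-by-term assignment of reactions and the lower-bound argument for $M_P$, $M_T$ match the paper's Steps 2 and 3 essentially verbatim (modulo the fact that the bound on $C_N$, hence the lower bound on $M_P$, only holds after a finite time, so the conclusion should be stated as a $\liminf$ rather than for all $t \geq 0$; the paper does the same, so this is not a substantive defect).

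The genuine gap is in your cascading argument for the upper bounds. The cascade works for $M_P, M_T$ (production bounded by $\nu_{sP}$, $\nu_{sT}$) and for $P_0, P_1, T_0, T_1$ (all production terms are either saturating Michaelis--Menten expressions or multiples of the already-bounded $M_P$, $M_T$). But it breaks at $P_2$ and $T_2$: the equations for $dP_2/dt$ and $dT_2/dt$ contain the non-saturating production term $+k_4 C$, so you cannot bound $P_2$ or $T_2$ before bounding $C$; meanwhile your bound on the $(C, C_N)$ subsystem takes $k_3 P_2 T_2$ as a ``bounded input,'' which presupposes the bound on $P_2$ and $T_2$. The dependency graph is circular exactly at the complex-formation loop $P_2 + T_2 \rightleftarrows C$. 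The repair is to treat this block jointly: for instance, $\frac{d}{dt}(P_2 + C + C_N) \leq V_{3P} - \min\{k_d, k_{dC}, k_{dN}\}\,(P_2 + C + C_N)$, since the $\pm k_3 P_2 T_2$, $\pm k_4 C$, $\pm k_1 C$, and $\pm k_2 C_N$ terms telescope, and similarly for $T_2 + C + C_N$. This is precisely the mechanism the paper exploits globally: its Step 1 uses a single linear functional $H$ in all ten species, with coefficients chosen so that all bilinear and exchange terms cancel, leaving only the strictly negative degradation terms once $H$ exceeds a threshold. Your species-by-species cascade is a legitimate and arguably more transparent alternative for the upper triangular part of the system, but it must be supplemented by such a grouped functional on the $\{P_2, T_2, C, C_N\}$ block to close.
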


\begin{proof}
We split the proof into the following steps:
\begin{enumerate}
\item
We prove that all trajectories are eventually bounded by a common bound. Our method is to construct a linear functional in all of the species which decreases along trajectories. This allows us to bound the Michaelis-Menten terms in (\ref{p0}) - (\ref{p2}) and (\ref{t0}) - (\ref{t2}).
\item
We bound concentrations of $M_P$ and $M_T$ from below in the limit as $t \to \infty$ so that, after a finite time, we may incorporate the concentrations of $M_P$ and $M_T$ into the rate constants for the creation of $P_0$ and $T_0$. The bounds guarantee that this can be done with a $\kappa$-variable reaction rate.
\item
We show that this resulting system corresponds to a $\kappa$-variable mass-action system with underlying network structure given by the reduced clock network.
\end{enumerate}

\noindent {\it 1. Trajectories are bounded:} We define the linear functional
\begin{equation}
\label{linear}
\begin{split}
& H(M_P,P_0,P_1,P_2,M_T,T_0,T_1,T_2,C,C_N) \\ & \; \; \; = (k_{sP}+1) M_p + (k_{sT}+1) M_T + k_d (P_0 + P_1 + P_2 + T_0 + T_1 + T_2) + 2k_d (C + C_N).
\end{split}
\end{equation}
and introduce the bound values $M_P^* = M_T^* = k^*/k_d$, $P_0^* = P_1^* = P_2^* = T_0^* = T_1^* = T_2^* = k^* / k_d^2$, $C^* = k^* / (2k_d k_{dC}),$ and $C_N^* = k^* / (2k_d k_{dN})$ where $k^* = (k_{sP} + 1) \nu_{sP} + (k_{sT} + 1) \nu_{sT}$. We now show that, along trajectories of (\ref{pertim}) - (\ref{pertim-end}), there is a finite time $\tau > 0$ such that $H(t) \leq H^*$ for $t \geq \tau$ where
\begin{equation}
\label{hstar}
H^* = (k_{sP}+1) M_p^* + (k_{sT}+1) M_T^* + k_d (P_0^* + P_1^* + P_2^* + T_0^* + T_1^* + T_2^*) + 2k_d (C^* + C_N^*).
\end{equation}

Consider the time derivative of (\ref{linear}) along trajectories of (\ref{pertim}) - (\ref{pertim-end}). After simplification, we have
\begin{equation}
\label{derivative}
\begin{split}
\frac{dH(t)}{dt} & = (k_{sP} + 1) \nu_{sP} \frac{K_{IP}^n}{K_{IP}^n + C_N^n} + (k_{sT}+1) \nu_{sT} \frac{K_{IT}^n}{K_{IT}^n + C_N^n}\\ & \; \; \; - (k_{sP}+1)\nu_{mP} \frac{M_P}{K_{mP}+M_P} -(k_{sT}+1) \nu_{mT} \frac{M_T}{K_{mT} + M_T} - k_d \nu_{dP} \frac{P_2}{K_{dP}+P_2} - k_d \nu_{dT} \frac{T_2}{K_{dT}+T_2} \\ & \; \; \; -k_d (M_P + M_T) - k_d^2 (P_0 + P_1 + P_2 + T_0 + T_1 + T_2) - 2k_d ( k_{dC}C + k_{dN}C_N).
\end{split}
\end{equation}
Now suppose that $H(t) \geq H^*$. It follows by construction that at least one of the reactants obtains or exceeds its corresponding bound value so that the negative values in (\ref{derivative}) exceed $k^*$ in magnitude. Noting that
\[(k_{sP} + 1) \nu_{sP} \frac{K_{IP}^n}{K_{IP}^n + C_N^n} + (k_{sT}+1) \nu_{sT} \frac{K_{IT}^n}{K_{IT}^n + C_N^n} \leq (k_{sP} + 1) \nu_{sP} +(k_{sT} + 1) \nu_{sT} = k^*\]
we have that (\ref{derivative}) is nonpositive whenever $H(t) \geq H^*$. The strict negativity of the remaining negative terms (\ref{derivative}) ensures that (\ref{derivative}) is bound strictly, so that
\begin{equation}
\label{boundstuff}
\frac{dH}{dt} < -\epsilon
\end{equation}
for some $0 < \epsilon$. Integrating (\ref{boundstuff}) shows that there is a $\tau > 0$ such that $H(t) \leq H^*$ for all $t \geq \tau$. Since all of the coefficients of the reactants in $H(t)$ are positive, it follows that every reactant is eventually bounded by the common bound $H^*$.\\

\noindent {\it 2. $M_P$ and $M_T$ are bounded away from zero:} We first define
\begin{equation}
\begin{split}
M_P^* & = \frac{\nu_{sP} K_{IP}^n}{K_{IP}^n + (C_N^*)^n} \frac{k_{mP}}{\nu_{mP} + k_d k_{mP}} \\
M_T^* & = \frac{\nu_{sT} K_{IT}^n}{K_{IT}^n + (C_N^*)^n} \frac{k_{mT}}{\nu_{mT} + k_d k_{mT}}.
\end{split}
\end{equation}


\noindent Now consider the equation (\ref{pertim}) for the concentration of $M_P(t)$. Since all trajectories of the system (\ref{pertim}) - (\ref{pertim-end}) converge to a bounded set, we have that there is a $C_N^* > 0$ and a $\tau \geq 0$ such that $C_N(t) \leq C_N^*$ for all $t \geq \tau$. It follows from this and that fact that $M_P(t) \geq 0$ that, for $t \geq \tau$, we have
\begin{equation}
\label{9292}
\begin{split}
\frac{dM_P(t)}{dt} & = \nu_{sP} \frac{K_{IP}^n}{K_{IP}^n + C_N^n} - \nu_{mP} \frac{M_P}{K_{mP}+M_P} - k_d M_P \\
& \geq \nu_{sP} \frac{K_{IP}^n}{K_{IP}^n + (C_N^*)^n} - \left( \frac{\nu_{mP}}{K_{mP}} + k_d \right) M_P.
\end{split}
\end{equation}
It follows from (\ref{9292}) that
\[M_P(t) \geq M_P^* + ( M_P(0) - M_P^*) e^{-  \frac{\nu_{mP} + k_d K_{mP}}{k_{mP}} t}.\]
It follows that $\displaystyle{\liminf_{t \to \infty} MP(t) \geq M_P^*}$. A similar argument shows that $\displaystyle{\liminf_{t \to \infty} MT(t) \geq M_T^*}$, and we are done.\\

\noindent {\it 3. Construction of network:} We consider equations (\ref{p0}) - (\ref{p2}) and (\ref{t0}) - (\ref{pertim-end}). We have shown that all variables are bounded, so that Lemma \ref{lemma6} implies any Michaelis-Menten form in these variables may be written in $\kappa$-variable form. We also have shown that there is are $M_P^{*} \geq 0, M_P^{**} \geq 0, M_T^{*} \geq 0,$ $M_T^{**} \geq 0,$ and $\tau \geq 0$ such that
\[0 < M_P^{*} \leq M_P(t) \leq M_P^{**} \; \; \; \mbox{ and } \; \; \; 0 < M_T^{*} \leq M_T(t) \leq M_T^{**}\]
for $t \geq \tau$. We may therefore incorporate these variables into a $\kappa$-variable influx rate for $P_0$ and $T_0$ respectively. The remaining terms correspond to linear degradation terms which are already in $k$-variable form. 

This yields a network which is $\kappa$-variable form. For example, we may write (\ref{p0}) as
\[\frac{dP_0}{dt} = \kappa_{sP}(t) - (\kappa_{1P}(t) + \kappa_d(t)) P_0 + \kappa_{2P}(t) P_1\]
where
\[\kappa_{sP}(t) \in [ M_P^*, M_P^{**} ], \kappa_{1P}(t) \in \left[ \frac{V_{1P}}{K_{1P}+P_0^*}, \frac{V_{1P}}{K_{1P}} \right], \kappa_{2P} \in \left[ \frac{V_{2P}}{K_{2P}+P_1^*}, \frac{V_{2P}}{K_{2P}} \right], \mbox{ and } \kappa_d(t) = k_d.\]
Since the resulting network has the form of the reduced clock network, we are done.

\end{proof}

\end{document}